\documentclass[10pt]{article}

\usepackage{amssymb,amsmath,amsfonts,amsthm,enumerate,color,mathrsfs,hyperref,amssymb,extarrows}
\usepackage[shortlabels]{enumitem}
\usepackage[toc,page,title,titletoc,header]{appendix}
\usepackage{graphicx}
\usepackage{indentfirst}
\usepackage{multicol}
\usepackage{booktabs}
\usepackage{appendix}
\usepackage{mathtools}
\DeclarePairedDelimiterX{\norm}[1]{\lVert}{\rVert}{#1}
\setlength\topmargin{-1cm} \setlength\textheight{220mm}
\setlength\oddsidemargin{0mm}
\setlength\evensidemargin\oddsidemargin \setlength\textwidth{160mm}

\newtheorem{theorem}{Theorem}[section]
\newtheorem{lemma}[theorem]{Lemma}
\newtheorem{proposition}[theorem]{Proposition}

\newtheorem{remark}[theorem]{Remark}
\numberwithin{equation}{section}

\newcommand{\dif}[1]{\left[ #1 \right]}

\def\to{\rightarrow}

\def\q{\quad}

\def\l{\langle}
\def\r{\rangle}
\def\no{\noindent}
\def\nb{\nonumber}

\newcommand{\hh}[1]{{\color{blue}{#1}}}

\def \sss{\scriptscriptstyle}

\def\bb{\begin{equation}
  \left\{
   \begin{array}{l} }
\def\ee{   \end{array}
  \right.
  \end{equation}}

 \def\beqn{\begin{eqnarray}}  \def\eqn{\end{eqnarray}}
\def\beqnx{\begin{eqnarray*}} \def\eqnx{\end{eqnarray*}}

\def\mm{ \left[
 \begin{matrix}}
\def\nn{\end{matrix} \right] }

\def\p{\partial}
\def \dd{\cdot}
\def \t{\times}

\newcommand{\gk}[1]{G_{#1,\lat}}

\def \gg{\nabla}
\def\curl{\nabla \times}

\def\de{\Delta}

\def \cs{{\rm curl}_{\p B}}
\def \csv{\vec{{\rm curl}}_{\p B}}
\def \lap{\Delta_{\p B}}
\def \divs{\nabla_{\p B} \dd}
\def \gs{\nabla_{\p B}}
\def \cst{{\rm curl}_{\Gamma}}
\def \cvt{\vec{{\rm curl}}_{\Gamma}}

\def\n{\nu}
\def\nt{\nu \times}

\newcommand{\pn}[1]{\frac{\partial #1}{\partial \n}}

\newcommand{\sm}[1]{\sum_{n = #1}^\infty}

\def \w {\widetilde}
\def \b{\overline}
\def \h{\hat}
\def \c{\check}

\def \R{\mathbb{R}}

\def \G{\mathbb{G}}
\def \C{\mathbb{C}}
\def \hh{\mathbb{H}}

\def \I{\mathbb{I}}
\def \ts{\mathrm{T}}
\def\d{\delta}
\def \vp{\varphi}
\def\k{{\rm \bf k}}
\def\kb{k'}
\def\xb{x'}

\def\kd{k_3}
\def\xd{x_3}

\def \emm{{\rm e}/{\rm m}}
\def \me{{\rm m}/{\rm e}}
\def \zm{{\rm m}}
\def \ze{{\rm e}}
\def \e{{\rm \bf e}}
\def \a{{\rm \bf a}}
\def \b{{\rm \bf b}}
\def \x{{\rm \bf x}}
\def \y{{\rm \bf y}}
\def \A{\mathcal{A}}
\def \T{\mathcal{T}}
\def \B{\mathcal{B}}
\def \W{\mathcal{W}}

\def \H{\mathcal{H}}

\def \Hs{\mathcal{H}^*}

\def \po{{\rm \bf p}}
\def \di{{\rm \bf d}}
\def \lat{\#}


\def\ei{E^i}

\def\hi{H^i}

\def\ep{electric permittivity\ }
\def\mmp{magnetic permeability\ }
\def\eec{\varepsilon_c}
\def\muc{\mu_c}
\def\eem{\varepsilon}
\def\mum{\mu}

\def\im{\mathfrak{Im}}
\def \lm {\lambda_\mu}
\def \lee {\lambda_\varepsilon}

\def\D{\mathcal{D}}


\def \kk{\mathcal{K}_{\emm}}
\def \np {\mathcal{K}_{\emm}^*}

\def \so{\mathcal{S}_{\emm}^0}
\def \no {\mathcal{K}_{\emm}^{0,*}}
\def \ko{\mathcal{K}_{\emm}^0}

\newcommand{\app}[1]{\mathcal{A}_{p,#1,0}}

\def \L{\mathcal{L}}
\def \M{\mathcal{M}}
\def \S{\mathcal{S}}
\def \K{\mathcal{K}}
\def \T{\mathcal{T}}

\def \htt{H_T^{\frac{1}{2}}(\p B)}
\def \hst{H^{\frac{1}{2}}(\p B)}
\def \hsn{H^{-\frac{1}{2}}(\p B)}
\def \htn{H_T^{-\frac{1}{2}}(\p B)}
\def \htd{H_T^{-\frac{1}{2}}({\rm div},\p B)}
\def \htc{H_T^{-\frac{1}{2}}({\rm curl},\p B)}

\newcommand{\pb}[1]{\w{\phi}_{0,#1}}
\newcommand{\psb}[1]{\w{\psi}_{0,#1}}

\title{Mathematical analysis of electromagnetic plasmonic metasurfaces}
\begin{document}
\author{
Habib Ammari\footnote{Department of Mathematics, ETH Z\"{u}rich, R\"{a}mistrasse 101, CH-8092 Z\"{u}rich, Switzerland
(habib.ammari@math.ethz.ch).}
\and Bowen Li\footnote{Department of Mathematics, The Chinese University of Hong Kong, Shatin, N.T., Hong Kong. (bwli@math.cuhk.edu.hk).}
\and Jun Zou\footnote{Department of Mathematics, The Chinese University of Hong Kong, Shatin, N.T., Hong Kong.
The work of this author was
substantially supported by Hong Kong RGC grant (Project 14322516).
(zou@math.cuhk.edu.hk).}
}
\date{}
\maketitle
\begin{abstract}
We study the anomalous electromagnetic scattering in the homogenization regime, by a subwavelength thin layer of periodically distributed plasmonic nanoparticles on a perfect conducting plane.
By using layer potential techniques, we derive the asymptotic expansion of the electromagnetic field away from the thin layer and quantitatively analyze the field enhancement
due to the mixed collective plasmonic resonances, which can be characterized by the spectra of periodic Neumann-Poincar\'{e} type operators. Based on the
asymptotic behavior of the scattered field in the macroscopic scale, we further demonstrate that
the optical effect of this thin layer can be effectively approximated by a Leontovich boundary condition, which is uniformly valid no matter whether the incident frequency is near the resonant range but varies with
the magnetic property of the plasmonic nanoparticles.
The quantitative approximation clearly shows the blow-up of the field energy and the conversion of polarization
when resonance occurs,
resulting in a significant change of the reflection property of the conducting plane.
These results confirm essential physical changes of electromagnetic metasurface at resonances mathematically, whose occurrence was verified earlier for the acoustic case \cite{ammari2017bubble} and the transverse magnetic case \cite{ammari2016mathematical}.
\end{abstract}

\section{Introduction}
The study of electromagnetic scattering by a thin layer composed of periodic subwavelength resonators,  which can strongly interact with the incident wave, have received considerable attention recently for their possibilities of realizing the full control of reflected and transmitted waves \cite{chen2016review,tretyakov2015metasurfaces,zhang2016advances,huidobro2017tunable}.
Such thin layers of composite material, usually referred to as the ultrathin
metasurfaces in the physical and engineering community, have a macroscopic effect on the scattered wave
although the layer thickness, or the size of cell structure, is negligible with respect to the operating wavelength \cite{felbacq2013layer,felbacq2015impedance,kraft2015designing,kraft2016bianisotropy,bonnetier2010asymptotic,lin2018scattering,lin2018scatteringhomo}.
We refer the readers to \cite{sun2019electromagnetic} for a systematic review of
the electromagnetic metasurfaces and its applications.
Great effort has been made recently by the mathematical community to develop a universal theory
for a better understanding of the mechanism underlying the metasurfaces.
It turns out that these anomalous scattering phenomena have a close relation
with the multiscale nature of the subwavelength cell structures and the excitation of various resonances.
A systematic study
was carried out in \cite{lin2017scattering,lin2018scattering,lin2018scatteringhomo,lin2019integral, lin2019fano}
to understand the electromagnetic scattering by the perfect conducting slab patterned
with the subwavelength narrow slits under varying regimes and periodic patterns.
And it was shown in \cite{lipton2017novel} that the scattering effect by a novel metasurface made of periodically corrugated cylindrical waveguides can be approximated by smooth cylindrical waveguides with an effective metamaterial surface
impedance.

Plasmonic nanoparticles such as gold and silver are popular choices for the subwavelength resonators in the electromagnetic setting due to their unique optical properties \cite{maier2007plasmonics}, and even a thin layer of these
particles can significantly influence the wave propagation pattern.
In this work, we shall consider the scattering effect of a thin layer of periodical plasmonic nanoparticles of
subwavelength mounted on a perfectly conducting plane in the homogenization regime, i.e.,
the period of the structure is about the same size as the nanoparticles but is much smaller than the incident wavelength.
At the quasi-static limit, a single nanoparticle can exhibit the plasmonic resonances at some specific frequencies that are related to the spectra of the Neumann-Poincar\'{e} operators.
We refer to \cite{ammari2016surface, ammari2016plasmaxwell, ammari2017mathematicalscalar}
for the mathematical analysis of plasmonic resonances.
However, when we consider the homogenization regime, the mixed and collective plasmonic resonances may occur,
which are very different from the single plasmonic nanoparticle case in free space.
It is interesting to note that if the thin layer is made of normal dielectric materials
with biperiodic conducting inclusions covering a cylindrical body,
a Leontovich boundary condition can be derived to approximate the effect of the layer \cite{abboud1995diffraction}.
Such grating problems or boundary layer effects have been extensively studied by matched asymptotic
expansion techniques, see, e.g., \cite{bensoussan2011asymptotic,achdou1998effective,abboud1996diffraction, allaire1999boundary,delourme2015high,delourme2013well}.
However, as we shall characterize, the cell problem here is nearly singular at some frequencies
if the nanoparticles are plasmonic. And in this case, the standard homogenization is not applicable and
the reflection coefficients may blow up. Therefore, we should seek new analytical tools for deriving
the exact blow-up order and justifying the validity of the approximation of the Leontovich-type boundary conditions.
In the present work, we use  layer potential techniques to study the reflection properties of electromagnetic plasmonic metasurfaces,
which is more general than the framework recently proposed in \cite{ammari2016mathematical}. The technique was used in \cite{ammari2017bubble}
to illustrate the superabsorption of acoustic waves with bubble metascreens observed in \cite{leroy2015superabsorption}.

As we shall point out in Section \ref{concluding},
our results and analyses in this work apply to several important physical regimes and applications,
in particular, to the general physical setting that involves several physical scales, namely,
the distance between every two thin layers of periodically distributed plasmonic nanoparticles,
the incident wavelength, sizes of nanoparticles, and
the period of each layer of periodical nanoparticles, can be of very different multiscale,
such as
\begin{eqnarray*}
&&\mbox{size of particle $\ll$ period $\ll$ distance $\sim$ wave length, or} \\
&&\mbox{size of particle $\ll$ period $\sim$ distance $\ll$  wave length.}
\end{eqnarray*}

%
%
%
The paper is organized as follows. In the next section, we describe our model mathematically and introduce some notation and definitions. In Section \ref{layerpotential}, we first introduce the quasi-periodic layer potentials and derive the corresponding asymptotic expansions, and then recall some basic results concerning the Neumann-Poincar\'{e} operators and establish the resolvent estimates for the leading-order potentials.  The Section \ref{sec:Far-field} is the main contribution of this work, devoted to the calculation of the far-field asymptotic expansion of the scattered wave and
a boundary condition approximation under the excitation of plasmons.
We shall end our work with some concluding and extension remarks.

\section{Problem descriptions and preliminaries} \label{setup}
This section is devoted to the basic setup and the mathematical formulation of the electromagnetic scattering problem. We shall write $\R^3 \ni \x = (x_1, x_2, x_3) = (x' ,x_3)$ with $x' = (x_1, x_2) \in \R^2 $ and $ x_3 \in \R$,
$\Gamma :=\{x \in \R^3 | x_3 = 0 \}$ for the reflective plane and $\R^3_{\pm}:=\{\x \in \R^3 | \pm x_3 > 0 \} $
for the upper and lower half spaces. We denote by $(\e_1,\e_2,\e_3)$ the usual Cartesian basis of $\R^3$. For a multi-index $\alpha \in \mathbb{N}^3$, we write $\x^\alpha = x_1^{\alpha_1} x_2^{\alpha_2} x_3^{\alpha_3}$ and $\p^\alpha = \p_1^{\alpha_1} \p_2^{\alpha_2} \p_3^{\alpha_3}$ with $\p_j = \frac{\p}{\p x_j}$. We shall always use
$B\in \mathbb{R}^3_+$ to denote a $C^2$ smooth bounded domain with its size of order one, and
use $D:=\d B$ to describe a single nanoparticle and $\D$ for
the collection of plasmonic nanoparticles periodically distributed along
a lattice $\Lambda^\d$ given by
\begin{equation*}
    \Lambda^\d = \{{\rm R}^\d \in \R^2; {\rm R}^\d = n_1 \d \textbf{a}_1 + n_2 \d \textbf{a}_2, n_i \in \mathbb{Z}\},
\end{equation*}
in which $\textbf{a}_1, \textbf{a}_2$ are linearly independent vectors lying in $\Gamma$ with $|\a_1| \sim |\a_2| \sim 1$.
Then we can write $\D = \bigcup_{{\rm R}^\d \in \Lambda^\d}(D +{\rm R}^\d)$.
For convenience, we shall write $\Lambda^1$ as $\Lambda$,
and we can see $\D = \bigcup_{{\rm R}\in \Lambda}\d(B + {\rm R})$.
We now define the  cell $\Sigma$, $\Omega$ in $\Gamma$ and in $\R^3_+$ respectively by
\begin{align*}
    &\Sigma =  \Big\{ \textbf{a} \in \R^2; \textbf{a} = c_1 \textbf{a}_1 + c_2 \textbf{a}_2, c_i \in (-\frac{1}{2},\frac{1}{2})
    \Big\}, \\
    &\Omega = \Big\{\textbf{a} \in \R^2; \textbf{a} = c_1 \textbf{a}_1 + c_2 \textbf{a}_2, c_i \in (-\frac{1}{2},\frac{1}{2})
    \Big\}\t (0,\infty).
\end{align*}
We further assume that $B$ is contained in $\Omega$ with the distance from the reflective plane $\Gamma$ of order one and the dimensionless quantity $\d$ is much less than one, since we are interested in the homogenization regime. For any $\w{\x} \in \p B$, we have $\x = \d \w{\x} \in \p D$. Then for a function $\varphi(\x)$ defined on $\p D$,
its pull back $\w{\varphi}(\w{\x}) := \varphi(\d \w{\x}) =\varphi(\x)$ is defined on $\p B$, and this convention is adopted
throughout this work. In particular, if we denote by $\n(\x)$ the exterior normal vector of $\p D$, then its pull back  $\w{\n}(\w{\x})$ is the normal vector of $\p B$. But we may also simply write $\n$ for a normal vector without specifying its definition domain when no confusion is caused. For the sake of exposition, we often refer to $\w{\x}$, $B$ and $\Omega$  as the reference variable, reference domain and reference cell, respectively.

We shall consider the \ep $\eec(\omega)$ and \mmp $\muc(\omega)$ of the nanoparticle are described by the Drude model \cite{ammari2016surface,ammari2017mathematicalscalar,maier2007plasmonics}.
Although explicit formulas for $\mu_c$ and $\eec$ are available in terms of the Drude model,
it suffices for all our analysis and arguments
to generally assume that both $\mu_c$ and $\eec$ are complex numbers with $\im \mu_c, \im \eec \ge 0$, and depend on the frequency $\omega$ of the incident wave.
We write the permittivity and permeability of the background medium by $\eem $ and $\mum $, and further assume them to be constant $1$
after an appropriate scaling.
Then the wave number $k_c(\omega)$ and $k$ are given by
$$
k_c(\omega) = \omega\sqrt{\eec(\omega)\muc(\omega)} \quad \text{and} \quad k = \omega\sqrt{\eem\mum} = \omega.
$$

We are now ready to formulate the scattering problem of our interest as follows:
\begin{equation}\label{model}
  \left\{  \begin{array}{ll}
  \curl E = i k \mu_\D H       & \text{in} \q \R_+^3\backslash \partial \D,  \\
  \curl H = - i k \varepsilon_\D E       & \text{in} \q \R_+^3\backslash \partial \D, \\
 \left[\n \t E \right]= \dif{\n \t H}= 0 &  \text{on} \q \p \D,\\
 \e_3 \t E = 0 \q & \text{on} \q \Gamma,
    \end{array} \right.
\end{equation}
where $E-\ei$ and $H-\hi$ satisfy certain outgoing
radiation conditions,  $\varepsilon_{\D} := \mathcal{X}(\R^3_+\backslash  \D) + \eec \mathcal{X}(\D)$, $\mu_{\D} := \mathcal{X}(\R^3_+\backslash \D) + \muc \mathcal{X}(\D)$, with $\mathcal{X}$ being the standard characteristic function.
Throughout the work, we use $[\dd]:= \dd|_- - \dd|_+$ to denote the jump across the interface
$\p \D$, and the subscripts $\pm$ to denote the limits taken from the outside and inside of $\D$ respectively.
The incident plane wave $(\ei, \hi)$ is given by
$$ 
\ei = \po e^{i k \di \cdot \x} - \po^* e^{i k \di^* \cdot \x}\,, \q
\hi = \di \t \po e^{i k \di \cdot \x} - \di^* \t \po^* e^{i k \di^* \cdot \x},
$$ 
where $\di$ is a unit vector for the incident direction with $d_3<0$, and $\po$ is the polarization direction. Here and in the sequel we often use the superscript $*$ to denote the reflection of a vector with respect to $\Gamma$, i.e., $\di^* = (d',-d_3)$.
But the notation $*$ may have other meanings at different occasions, so we will illustrate the actual meaning of $*$ whenever it may cause confusion.
Denote by $\k = k \di,\, \k^* = k \di^*$ the wave vector and its reflection respectively. We are  interested in finding a quasi-periodic solution $(E,H)$ to the system
\eqref{model} such that
$$
E(\x+ {\rm R}^\d) = e^{i\k \cdot  {\rm R}^\d} E(\x), \q
H(\x+ {\rm R}^\d) = e^{i\k \cdot  {\rm R}^\d} H(\x).
$$
Hence we have the usual Rayleigh Bloch expansion for the scattered field in the domain above the layer of nanoparticles.
As in \cite{ammari2017bubble}, we impose the outgoing radiation condition on
the solutions to the system \eqref{model} by assuming that all the modes in the Rayleigh-Bloch expansion are either decaying exponentially or propagating along the $x_3$-direction.
Under the subwavelength assumption, the period of lattice is of order $\d$, and the scattered wave consists of only a single propagative mode in the far field, namely
\begin{equation*}
    E^r:= E- E^i \sim \po^r e^{ik'\dd x'}e^{-ik_3 x_3} \q \text{as} \ x_3 \to \infty \, \mbox{\quad
    (for some polarization direction $\po^r$)}.
\end{equation*}

The remaining part of this section is devoted to introduce more notation, definitions and recalling some basic results concerning the surface differential operators and function spaces that are frequently used in the sequel.
For $s \in \R$, we denote by $H^s(\p B)$ and $H_T^s(\p B)$
the usual Sobolev space of order $s$ of scalar functions and tangential vector fields on $\p B$, respectively,
and denote by $H_0^s(\p B)$ the zero mean subspace of $H^s(\p B)$.
Also, the Sobolev spaces $H^s(B)$ and $H^s_{loc}(\Omega\backslash B)$
are needed, as well as the trace operator $\gamma_0: H^s(B) \to H^{s-\frac{1}{2}}(\p B)$ for
$s > \frac{1}{2}$.
We introduce the surface gradient $\gs$ and the surface vector curl (written as
$\csv$) in the standard way \cite{nedelec2001acoustic}, which map $\hst$ to $\htn$. Their corresponding adjoint operators are the surface divergence $\divs$ and the surface scalar curl, i.e., $\cs$: $\htt \to \hsn$. And it holds in $\hsn$ that
\begin{equation}
    ker(\gs) = ker(\csv) = \R.
\end{equation}
The Laplace-Beltrami operator $\lap := \divs \gs = - \cs \csv$ shall also be used.
For a vector field $u \in \htn$, we will often need its tangential component $r(u) := \n \t u$. 
It is easy to check by using definition and duality relation that each of the following identities holds for a suitable function $\varphi$,
\begin{align}
&\csv \varphi =  -r(\gs \varphi), \q
\cs \varphi =  -\divs(r\varphi),\label{bapro:sur:1} \\
& \divs \csv \varphi = 0, \q \cs \gs \varphi = 0. \label{bapro:sur:2}
\end{align}
Moreover, we introduce the spaces $H({\rm curl}, B)$,  $H_{loc}({\rm curl}, \Omega\backslash \bar{B})$, $H({\rm div}, B)$ and $H_{loc}({\rm
div}, \Omega\backslash \bar{B})$ of (locally) square integrable vector fields with (locally) square integrable curl and divergence, respectively.
We will frequently use the normal trace $\gamma_n(u):= u \dd \n|_{\p B}$, the tangential trace $\gamma_t(u):= \n \t u|_{\p B}$ and the tangential component trace
$\pi_t(u):= (\n \t u)\t \n|_{\p B}$ for appropriately smooth vector fields $u$. Indeed,
$\gamma_n,\gamma_t$ and $\pi_t$ can be extended to linear continuous mappings from $H({\rm div},B)$ to $\hsn$, $H({\rm curl},B)$ to $\htd$, and $H({\rm curl},B)$ to $\htc$ respectively, where
\begin{align*}
&\htd = \{\varphi \in \htn;\q \divs \varphi \in H^{-\frac{1}{2}}(\p B)\},
\\
&\htc = \{\varphi \in \htn;\q \cs \varphi \in H^{-\frac{1}{2}}(\p B)\}.
\end{align*}
It is known that $\htc$ can be identified with the dual space of $\htd$ with duality pairing $\l \psi,\varphi \r: = \int_{\p B} \psi \dd \varphi d\sigma $ for smooth vector fields $\psi,\varphi$ (cf.\cite{monk2003finite,buffa2002traces}).
And for $f \in H^1(B)$, we have
\begin{equation} \label{relation:trace:l1}
    \gs \gamma_0(f) = \pi_t(\gg f).
\end{equation}
Similarly, it holds for $u \in H({\rm curl},B)$,
\begin{equation}
\label{relation:trace:curl}
\cs \pi_t(u) = \gamma_n(\curl u).
\end{equation}
For our subsequent analysis, the Helmholtz decomposition of $\htd$ is frequently used (cf.\cite{buffa2002traces}):
\begin{equation*}
\htd = \gs H^{\frac{3}{2}}_0(\p B) \oplus \csv H^{\frac{1}{2}}_0(\p B)\,.
\end{equation*}

In this work, we denote by $\otimes$ the tensor product operation of two vectors, i.e., given two vectors $\a \in \R^n$ and $\b \in \R^m$, $\a \otimes \b$ is a $n \times m$ matrix given by $(\a \otimes \b)_{ij} = a_i b_j$, and let vector operators act on matrices column by column.  For any two
Banach spaces $X$ and $Y$, we write by $\L(X,Y)$ the set of all linear continuous mappings from $X$ to $Y$, or simply by $\L(X)$ if $Y = X$.
We write $\norm{\dd}_X$ for the norm defined on the space $X$ and $_{X^*}\l\dd,\dd\r_{X}$ for the natural duality pairing between $X$ and its dual space $X^*$. However, we may simply write $\norm{\dd}$ and $\l\dd,\dd \r$
without specifying the subscripts when no confusion is caused.
We will not identify the dual spaces of Hilbert spaces with themselves,
instead we always regard them as the subspaces of distributions.
Hence all the adjoint operators in this work are introduced by their natural duality pairings.
 We end this section by introducing the expression $x \lesssim y$, which means $x\leq C y$ for some generic constant $C$. If $x \gtrsim  y$ and $x \lesssim y$ holds simultaneously, then we write $x \approx y$.


\section{Layer potential techniques} \label{layerpotential}
Before considering the scattering problem, we present some preliminary knowledge on the quasi-periodic layer potential techniques in this section for our subsequent analysis. We first introduce the quasi-periodic Green's tensors satisfying certain boundary conditions and compute their asymptotic expansions with respect to $\d$. Then we study the associated layer potentials, as well as their asymptotics. After that, we turn our attention to the properties of the leading-order potentials and the resolvent estimates for Neumann-Poincar\'{e}-type operators. These results will be the foundation for the far-field asymptotics and approximation error estimate conducted in the next section.

\subsection{Quasi-periodic Green's tensors and basic properties}
Following the notation in \cite{ammari2017bubble}, we start with  the scalar quasi-periodic Green's function
$G_\lat^\k$ with complex wavenumber $k$ with $\im k \ge 0$, which is the solution to
\begin{equation}
(\de + k^2)G_\lat^\k(\x) = \sum_{{\rm R} \in \Lambda}e^{i\kb \dd \xb}\d_{{\rm R}}(\x)= \sum_{{\rm R} \in \Lambda}e^{i \kb \dd {\rm R}}\d_{{\rm R}}(\x)\,,
\end{equation}
satisfying a certain outgoing condition. In the distribution sense, $G_\lat^\k$ is well-defined and given by
\begin{equation}
    G^\k_\lat(\x) = \sum_{{\rm R} \in \Lambda}e^{i \kb \dd {\rm R}} G^k(\x,{\rm R})\,,
\end{equation}
where $G^k(\x,\y): =  - \frac{e^{ik|\x-\y|}}{4\pi|\x-\y|}$ is  the fundamental solution to the Helmholtz operator $\de + k^2$ in free space.
We further define $G_\lat^\k(\x,\y) := G_\lat^\k(\x-\y)$. For our purpose, we are interested in the behavior of the quasi-periodic Green's function  $G_{\lat^\d}^\k(\x)$ with respect to the lattice $\Lambda^\d$. With the reference variable $\w{\x}$, we easily observe that
\begin{equation} \label{for:scal:greenorg}
  G_{\lat^\d}^\k(\x) = \sum_{{\rm R}^\d \in \Lambda^\d} e^{i\kb \dd {\rm R}^\d}G^k(\x,{\rm R}^\d)  = \frac{1}{\d} \sum_{{\rm R} \in \Lambda} e^{i\d\kb \dd {\rm R}}G^{\d k}(\w{\x},{\rm R})\,.
\end{equation}
We thus have the following useful scaling property:
\begin{equation}    \label{for:scal:green}
   G_{\lat^\d}^\k(\x) = \frac{1}{\d}G_{\lat}^{\d \k}(\w{\x})\,.
\end{equation}
 Let $\Lambda^*$ be the reciprocal lattice of $\Lambda$ (cf.\cite{linton2010lattice}), and
$\tau$ be the volume of the unit cell of $\Lambda$. Then the explicit representation formula of $G^\k_\lat$ in the homogenization regime, i.e.,  $|k|  \ll \tau \sim 1$, is available \cite{ammari2017bubble}, as stated in the next theorem.
\begin{theorem}
Let $k\in \C$ be the complex wave number with $\im k  \ge 0 $. Assume that $|k|$ is small enough, then the quasi-periodic Green's function $G_\lat^\k$ can be expressed by
\begin{equation} \label{quasi-green}
 G_\lat^\k(\x) =\frac{i}{2 \tau \kd} e^{i\kb\dd \xb-i\kd |\xd|} - \frac{1}{2 \tau} \sum_{\xi \in \Lambda^* \backslash \{0\}} \frac{1}{\sqrt{|\xi+ k'|^2 -k ^2}}e^{i(\xi + k') \dd \xb}e^{-\sqrt{|\xi+ k'|^2 -k ^2} |x_3|}\,,
\end{equation}
where $\sqrt{z}$ is viewed as an analytic function defined by $\sqrt{z} = |z|^{1/2}e^{i\arg{z}/2}$ for $z \in \C \backslash \{-it, t \ge 0\}$.

In particular, when $k = 0$,
\begin{equation} \label{quasi-green:k=0}
    G^0_\lat(\x) = \frac{|\xd|}{2 \tau} - \frac{1}{2 \tau} \sum_{\xi \in \Lambda^* \backslash \{0\}} \frac{1}{|\xi|} e^{i \xi \dd \xb}e^{-|\xi|\dd|x_3|}.
\end{equation}
\end{theorem}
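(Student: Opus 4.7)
The plan is to derive the representation formula by Fourier-decomposing $G_\lat^\k$ in the transverse variable $x'$ using its quasi-periodicity, converting the lattice sum of Dirac sources on the right-hand side via Poisson summation, and then solving the resulting decoupled one-dimensional Helmholtz equations in $x_3$ subject to the prescribed outgoing/decay condition at $|x_3|\to\infty$.

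First, since $G_\lat^\k(\x+R)=e^{i\kb\cdot R}G_\lat^\k(\x)$ for every $R\in\Lambda$, the function $e^{-i\kb\cdot x'}G_\lat^\k(x',x_3)$ is $\Lambda$-periodic in $x'$, so it admits a Fourier expansion
\begin{equation*}
G_\lat^\k(x',x_3)=\sum_{\xi\in\Lambda^*}g_\xi(x_3)\,e^{i(\xi+\kb)\cdot x'}.
\end{equation*}
On the source side, the Poisson summation identity
\begin{equation*}
\sum_{R\in\Lambda}e^{i\kb\cdot R}\delta_R(\x)
=\frac{1}{\tau}\sum_{\xi\in\Lambda^*}e^{i(\xi+\kb)\cdot x'}\,\delta(x_3)
\end{equation*}
holds in the distributional sense. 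Substituting both expansions into $(\Delta+k^2)G_\lat^\k=\sum_R e^{i\kb\cdot R}\delta_R$ and identifying Fourier modes, I would obtain for each $\xi\in\Lambda^*$ the one-dimensional problem
\begin{equation*}
g_\xi''(x_3)-\bigl(|\xi+\kb|^2-k^2\bigr)g_\xi(x_3)=\frac{1}{\tau}\,\delta(x_3).
\end{equation*}

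Set $\gamma_\xi:=\sqrt{|\xi+\kb|^2-k^2}$ using the prescribed branch. For $|k|$ small enough, every term with $\xi\ne 0$ has $|\xi+\kb|^2-k^2$ avoiding the negative imaginary axis and satisfies $\operatorname{Re}\gamma_\xi>0$, so the unique distributional solution that decays as $|x_3|\to\infty$ is $g_\xi(x_3)=-\frac{1}{2\tau\gamma_\xi}e^{-\gamma_\xi|x_3|}$; the jump condition $[g_\xi']_{0}=1/\tau$ fixes the prefactor. For $\xi=0$, one has $\gamma_0=\sqrt{|\kb|^2-k^2}$, which under the chosen branch equals $-i k_3$ with $k_3=\sqrt{k^2-|\kb|^2}$ satisfying $\operatorname{Im}k_3\ge 0$; then $-\frac{1}{2\tau\gamma_0}e^{-\gamma_0|x_3|}=\frac{i}{2\tau k_3}e^{-ik_3|x_3|}$, and this corresponds precisely to the outgoing radiation condition imposed on the single propagating mode. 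Summing the modes yields formula \eqref{quasi-green}.

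For the limit $k=0$ (so that $\kb=0$ as well), the equations for $\xi\ne 0$ are unchanged up to replacing $\gamma_\xi$ by $|\xi|$, producing the evanescent sum in \eqref{quasi-green:k=0}. The exceptional mode $\xi=0$ degenerates to $g_0''(x_3)=\frac{1}{\tau}\delta(x_3)$, whose particular solution $g_0(x_3)=\frac{|x_3|}{2\tau}$ (modulo constants and linear terms that are incompatible with the $\Lambda$-quasi-periodic outgoing framework) gives the leading polynomial term $|x_3|/(2\tau)$.

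The main obstacles are mostly bookkeeping rather than conceptual: verifying that the branch of the square root stated in the theorem is exactly the one selected by the outgoing radiation condition on the single propagating $\xi=0$ mode, and checking convergence of the Fourier series. The latter follows from the exponential decay $e^{-\operatorname{Re}\gamma_\xi|x_3|}$ with $\operatorname{Re}\gamma_\xi\sim|\xi|$ for large $|\xi|$, which gives uniform convergence on $\{|x_3|\ge c>0\}$ and hence justifies the termwise identification of Fourier coefficients. The $k=0$ case also needs a brief remark explaining why $g_0(x_3)=|x_3|/(2\tau)$ is the correct (distributional) choice: it is the unique even solution of $g_0''=\delta/\tau$, which is natural since both the source and the problem are invariant under $x_3\mapsto -x_3$ in this degenerate case.
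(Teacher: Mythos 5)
Your strategy---Fourier decomposition in $x'$ using quasi-periodicity, Poisson summation to diagonalize the lattice of Dirac sources, and solving decoupled one-dimensional Helmholtz problems in $x_3$ with the outgoing/decay condition fixing the constants---is the standard derivation of such representation formulas and is sound. Note, however, that the paper itself supplies no proof of this theorem: it simply cites \cite{ammari2017bubble}, where this exact derivation is carried out, so your proposal does reconstruct what is implicitly being relied on.

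One bookkeeping point deserves more care than your sketch gives it. With the branch $\sqrt{z}=|z|^{1/2}e^{i\arg z/2}$, $\arg z\in(-\tfrac{\pi}{2},\tfrac{3\pi}{2})$, stated in the theorem, a negative real number has argument $\pi$, so for real $k>|\kb|$ one gets $\gamma_0=\sqrt{|\kb|^2-k^2}=+i\sqrt{k^2-|\kb|^2}$, not $-i\sqrt{k^2-|\kb|^2}$ as you write. Correspondingly, in the limiting-absorption regime $\im k>0$ with $\re k$ dominant one finds $\re\gamma_0<0$, so the unique decaying mode is $g_0(x_3)=\frac{1}{2\tau\gamma_0}e^{\gamma_0|x_3|}$ rather than the $\xi\neq 0$ form $-\frac{1}{2\tau\gamma_\xi}e^{-\gamma_\xi|x_3|}$ that you carried over to $\xi=0$. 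These two sign slips cancel in your final expression $\frac{i}{2\tau k_3}e^{-ik_3|x_3|}$, so your end result agrees with \eqref{quasi-green}, but as written the intermediate assertions for the propagating mode are individually incorrect and should be fixed. Finally, for $k=0$ the additive constant in $g_0$ is not in fact excluded by quasi-periodicity or by any outgoing condition (a constant is periodic and bounded); it is a normalization choice, consistent with the observation in the paper that $G^{\d\k}_\lat$ has a $\frac{i}{2\d k_3\tau}$ pole at $\d=0$ and that $G^0_\lat$ agrees with the constant-order Taylor coefficient only after the adjustment \eqref{rela:leading-orderm}. That remark should be softened rather than claimed as a forced conclusion.
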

One can readily observe the symmetry property of $G^0_\lat$ from its representation formula \eqref{quasi-green:k=0},
\begin{equation} \label{recipro:grenn:k=0}
    G^0_\lat(\pm x',\pm x_3) = G^ 0_\lat(x',x_3)\,.
\end{equation}
Concerning the asymptotics of $G_\lat^{\d \k}$ with respect to $\d$, a direct application of Taylor's series gives us
\begin{equation}\label{expansion:green}
G_\lat^{\d \k}(\x) = \frac{i}{2\d \kd \tau} + G_{0,\lat}(\x) + \sum_{n =1}^\infty (\d k) ^nG_{n,\lat}(\x)\,.
\end{equation}
We remark that each term $G_{n,\lat}$ in \eqref{expansion:green} can be computed explicitly (cf.\cite{ammari2017bubble} for more details). In particular, for the leading-order term, we have
\begin{equation} \label{formula:exp:greenzero}
    G_{0,\lat}(\x) = \frac{k_3|\xd|-k' \dd x'}{2 k_3 \tau} - \frac{1}{2 \tau} \sum_{\xi \in \Lambda^* \backslash \{0\}} \frac{1}{|\xi|} e^{i \xi \dd \xb}e^{-|\xi|\dd|x_3|}  = G_\lat^0(\x) - \frac{\kb \dd x'}{2 \kd \tau}\,.
\end{equation}
Recalling from the definition of $G^{\d \k}_\lat$ that
$$
(\Delta + \d^2 k^2)G_{\lat}^{\d \k}(\x) = \sum_{{\rm R} \in \Lambda}e^{i \d \kb \dd \xb}\d_{{\rm R}}(\x)\,,
$$
we obtain, by substituting the expansion (\ref{expansion:green})  into the above formula,
\begin{align*}
& \Delta G_{0,\lat}(\x) + \d k (\Delta G_{1,\lat}(\x) + \frac{i}{2 d_3 \tau}) + \sum_{n=2}^\infty \d^n k^n (\Delta G_{n,\lat}(\x)+ G_{n-2,\lat}(\x)) \\
 = & \sum_{n =0}^\infty \d ^n k^n\sum_{{\rm R} \in \Lambda} \frac{(i d' \dd \xb)^n}{n!} \d_{{\rm R}}(\x)\,,
\end{align*}
which imply (with notation $G_{-1,\lat} = \frac{i}{2 d_3 \tau}$)
\begin{align}
\Delta \gk{0}(\x) = \sum_{{\rm R} \in \Lambda} \d_{{\rm R}}(\x), \q \text{and} \q
\Delta \gk{n}(\x) + \gk{n-2}(\x) = \sum_{{\rm R} \in \Lambda} \frac{(i d' \dd {\rm R})^n}{n!} \d_{{\rm R}}(\x)\,, \q n \ge 1\,. \label{for:recursive:original}
\end{align}

As the perfect conducting boundary condition is enforced only on the electric field,
we have to distinguish between the electric and magnetic Green's tensors in terms of
the boundary conditions. Their definitions rely on the quasi-periodic Green's functions with Dirichlet and Neumann boundary conditions, defined respectively by
\begin{align}
    G^\k_\ze(\x,\y) := G_{\lat}^\k(\x-\y) - G_\lat^\k(\x-\y^*)\,, \quad
    G^\k_\zm(\x,\y) := G_{\lat}^\k(\x-\y) + G_\lat^\k(\x - \y^*)\,. \label{def:dirichletgreen}
\end{align}
%
The asymptotics below follows directly from (\ref{expansion:green}),
\begin{equation}  \label{expansion:green:emm}
    G_{\emm}^{\d \k}(\x,\y) = \sum_{n = -1}^\infty (\d k)^n G_{n,\emm}(\x,\y)\,,
\end{equation}
where $G_{n,\emm}(\x,\y)$ are given by
\begin{equation} \label{eq:auxemeplocal}
     G_{n,\emm}(\x,\y) := \gk{n}(\x - \y) \mp \gk{n}(\x-\y^*)\q  \text{for}\  n \ge -1\,.
\end{equation}
Especially,
$G_{-1,\ze} = 0$ and  $G_{-1,\zm} = {i}/{(d_3 \tau)}$.
For the sake of exposition, here and in the sequel, we use the subscript $\emm$ to include
two cases, e.g., \eqref{expansion:green:emm} actually represents
two equations, obtained by replacing $\emm$ by e and m, respectively,
in \eqref{expansion:green:emm}.
Similarly, we shall also use $\me$ frequently.
Recalling (\ref{for:recursive:original}), if $\x,\y \in \bar{B}$, we have by noting that
$\d_{{\rm R}}(x'-y') = 0$ for all ${\rm R} \neq 0$,
\begin{align}
\Delta G_{0,\emm} = \d_{0}, \q and \q
\Delta G_{n,\emm} + G_{n-2,\emm} = 0\,,  \q n \ge 1. \label{for:Recursive relationship:green}
\end{align}
These recurrence relations shall be used in the calculation of asymptotic expansions of layer potential operators. According to the reciprocity (\ref{recipro:grenn:k=0}) of the periodic Green's function, we know
\begin{equation*}
    G_{\emm}^0(\x,\y) = G_{\emm}^0(\y,\x).
\end{equation*}
Combining this observation with  (\ref{formula:exp:greenzero}) and (\ref{eq:auxemeplocal}), we readily see the reciprocity is no longer suitable for $G_{0,\zm}(\x,\y)$ since
\begin{equation} \label{rela:leading-orderm}
    G_{0,\zm}(\x,\y) = G_\zm^0(\x,\y) - \frac{k'\dd (x'-y')}{k_3 \tau}\,,
\end{equation}
while $G_{0,\ze}(\x,\y)$ still behaves well due to
\begin{equation} \label{rela:leading-ordere}
    G_{0,\ze}(\x,\y) = G_\ze^0(\x,\y)\,.
\end{equation}
It is also worth mentioning that there is a singularity for $G^{\d \k}_\zm$ as $\d$ goes to $0$ (cf.\eqref{expansion:green} and \eqref{def:dirichletgreen}). This, together with non-symmetry of $G_{0,\zm}(\x,\y)$, makes
some of our subsequent analyses much more involved.
Finally, for our later use, we introduce the conjugate kernels $\hat{G}_{0,\emm}$ of $G_{0,\emm}$ by setting $\hat{G}_{0,\emm}(\x,\y):= G_{0,\emm}(\y,\x)$, namely,
\begin{align}
    & \hat{G}_{0,\ze}(\x,\y) = G_{0,\ze}(\y,\x) = G_\ze^0(\x,\y),
    \label{kernel:conjugate:e} \\
    & \hat{G}_{0,\zm}(\x,\y) = G_{0,\zm}(\y,\x) = G_\zm^0(\x,\y) +  \frac{k'\dd (x'- y')}{k_3 \tau}.
     \label{kernel:conjugate:m}
\end{align}

We are now ready to introduce the electromagnetic Green's tensor
\begin{equation} \label{def:tensor:em}
    \G_{\emm}^{\k}(\x,\y) = (1+ \frac{1}{k^2}\gg_\x\gg_\x \dd )\Pi^{\k}_{\emm}(\x,\y),
\end{equation}
where the matrix-valued functions $\Pi^{\k}_{\emm}$ are given by
\begin{equation} \label{def:kernel}
     \Pi^{\k}_{\emm}(\x,\y) =
     \mm
     G^\k_{\emm}\e_1,& G^\k_{\emm}\e_2,& G^\k_{\me}\e_3
     \nn(\x,\y)\,.
\end{equation}
It is easy to check that $\G_{\emm}^\k$ solve the equations
\begin{equation*}
    \nabla_\x \t \nabla_\x \t \G^\k_{\emm}(\x,\y) - k^2 \G^\k_{\emm}(\x,\y) =  \sum_{{\rm R} \in \Lambda}e^{i \kb \dd {\rm R}}\d_{{\rm R}}(\x - \y) \I_3,
\end{equation*}
and satisfy the boundary conditions:
\begin{equation*}
    \e_3 \t \G^\k_{{\rm e}}(\x,\y) = 0 \q \text {and} \q
    \e_3 \dd \G^\k_{{\rm m}}(\x,\y) = 0 \q \mbox{for} ~~ \x \in \Gamma, \y \in \R^3_+,
\end{equation*}
respectively. As a direct application of \eqref{expansion:green:emm}, we have the asymptotics of $\Pi^{\d \k}_{\emm}$:
\begin{equation} \label{expansion:matrix:emm}
    \Pi^{\d \k}_{\emm}(\x,\y) = \sum_{n = -1}^\infty   (\d k)^n \Pi_{n,\emm}(\x,\y).
\end{equation}
Then we readily see an expansion from the above formula and the definition of $\G^{\k}_{\emm}$ in \eqref{def:tensor:em}:
\begin{equation}\label{expan:Green:tensor}
\G^{\d \k}_{\emm}(\x,\y) = \frac{1}{\d k} \G_{-1,\emm}(\x,\y) + \sum_{n = 0}^\infty (\d k)^n \G_{n,\emm}(\x,\y)\,,
\end{equation}
where $\G_{n,\emm}(\x,\y)$ is given by
$$\G_{n,\emm}(\x,\y) = \Pi_{n,\emm}(\x,\y) +
\gg_x \gg_x \dd \Pi_{n+2,\emm}(\x,\y).$$
We end this subsection with some basic but very useful observations:
\begin{align}
    \frac{\p}{\p x_i}G^\k_\ze = - \frac{\p}{\p
    y_i}G^\k_\ze(i =1,2), \q \frac{\p}{\p x_3}G^\k_\ze = - \frac{\p}{\p y_3}G^\k_\zm ,\\
    \frac{\p}{\p x_i}G^\k_\zm = - \frac{\p}{\p y_i}G^\k_\zm(i = 1,2), \q \frac{\p}{\p x_3}G^\k_\zm = -\frac{\p}{\p y_3}G^\k_\ze,
\end{align}
which lead us to the following reciprocity:
\begin{align} \label{rela:tensor:trans}
    \gg_\x \t \Pi^\k_{\emm}(\x,\y)^T = \gg_\y \t
    \Pi^\k_{\me}(\x,\y).
\end{align}

\subsection{Integral operators and their asymptotics} \label{sec:layer potential}
With the help of the Green's tensors introduced in the last subsection, we define the following vector potentials with density $\varphi$ on $\p B$ \cite{griesmaier2008asymptotic,colton2012inverse}:
\begin{align*}
\A_{B,\emm}^\k : \htd &\longrightarrow H({\rm curl},B)\  \text{or} \ H_{loc}({\rm curl}, \Omega\backslash \bar{B})  \\
\varphi & \longmapsto \A_{B,\emm}^\k[\varphi](\x) = \int_{\p B} \Pi^\k_{\emm}(\x,\y)\varphi(\y)d\sigma; \\
\M_{B,\emm}^\k: \htd &\longrightarrow \htd  \\
\varphi & \longmapsto \M_{B,\emm}^\k[\varphi](\x) = \int_{\p B} \n(\x) \t \nabla_\x \t  \Pi_{\emm}^{\k}(\x,\y)\varphi(\y)d\sigma; \\
\L_{B,\emm}^\k: \htd &\longrightarrow \htd  \\
\varphi & \longmapsto \L_{B,\emm}^\k[\varphi](\x) =  \n(\x) \t (k^2 \A_{B,\emm}^\k[\varphi](\x) + \nabla \S_{B,\emm}^\k[\nabla_{\p B} \dd \varphi](\x)).
\end{align*}
Further, we define the single layer potential
\begin{align*}
    \S_{B,\emm}^\k: \hsn &\longrightarrow \hst  \\
\varphi & \longmapsto \S_{B,\emm}^\k[\varphi](\x) = \int_{\p B} G_{\emm}^{\k}(\x,\y)\varphi(\y)d\sigma,
\end{align*}
the double layer potential
\begin{align*}
    \K_{B,\emm}^\k: \hst &\longrightarrow \hst  \\
\varphi & \longmapsto \K_{B,\emm}^\k[\varphi](\x) = \int_{\p B} \frac{\p}{\p\n_\y}G^\k_{\emm}(\x,\y)\varphi(\y)d\sigma,
\end{align*}
and the Neumann-Poincar\'{e} operator
\begin{align*}
    \K_{B,\emm}^{\k,*}: \hsn &\longrightarrow \hsn  \\
\varphi & \longmapsto  \K_{B,\emm}^{\k,*}[\varphi](\x) = \int_{\p B} \frac{\p}{\p\n_\x}G^\k_{\emm}(\x,\y)\varphi(\y)d\sigma.
\end{align*}
It follows directly from the definition that $\S_{B,\emm}^\k$ satisfy the Dirichlet and Neumann boundary conditions,
respectively, on the reflective plane $\Gamma$, while $\A_{B,\ze}^\k$ and $\A_{B,\zm}^\k$
satisfy the following conditions, respectively:
\begin{equation*}
    \e_3 \t \A_{B,\ze}^\k[\varphi](\x) = 0,  \q \e_3 \dd \A_{B,\zm}^\k[\varphi](\x) = 0 \q \text{on}\  \Gamma\,.
\end{equation*}
When $k = 0$, we omit the subscript $B$ in all the potentials defined above,
e.g., we write $\so$ for $\S^0_{B,\emm}$.
We emphasize that all the definitions depend on the lattice $\Lambda$ and the domain in the unit cell $\Omega$. For the scaled lattice $\Lambda^\delta$ and domain $D$, all the operators above can be defined similarly. It can be shown that $\curl \A^k_{B,\emm}$ defines a bounded linear operator from $\htd$  into $H({\rm curl},B)$ or $H({\rm curl},\Omega \backslash \bar{B})$(cf.\cite{colton2012inverse}).
Noting that
$G^\k_{\emm}(\x) - G^k(\x)$ is a smooth function defined in $\Omega$,
thus
the trace formulas related to $\A^\k_{B,\emm}$ follow directly from the standard results \cite[Lemma 2.96]{ammari2018mathematical},
\begin{align}
& (\n \t \curl \A^\k_{B,\emm})|_{\pm} = \mp \frac{1}{2} + \M^\k_{B,\emm} \label{formu:label:vA},
\\ & (\n \t \curl \curl \A^\k_{B,\emm})|_{\pm} = \L^\k_{B,\emm} \label{formu:label:vB},
\end{align}
while it holds for $\S^\k_{B,\emm}$ that,
\begin{equation} \label{for:trace:S}
    (\frac{\p}{\p \n_\x} \S^\k_{B,\emm})|_{\pm} = \pm \frac{1}{2} + \K^\k_{B,\emm} .
\end{equation}
Recalling the asymptotic expansions (\ref{expansion:green:emm}) and (\ref{expansion:matrix:emm}), we may define the potentials $\A_{n,\emm},\S_{n,\emm}$ associated with $\Pi_{n,\emm}$ and $G_{n,\emm}$ respectively, and $\K_{n,\emm}$ and $\K^*_{n,\emm}$ as well. Then we can directly see that the following expansions hold for any density $\varphi$ on $\p D$,
\begin{align}
   \A_{D,\emm}^\k[\varphi](\x) = \d \A_{B,\emm}^{\d \k}[\w{\varphi}](\w{\x}) = \sum_{n = -1}^\infty \d^{n + 1} k^n \A_{n,\emm}[\w{\varphi}](\w{\x}) \label{for:asy:A}\,, \\
     \S_{D,\emm}^\k[\varphi](\x) = \d \S_{B,\emm}^{\d \k}[\w{\varphi}](\w{\x}) =
\sum_{n = -1}^\infty \d^{n + 1} k^n
\S_{n,\emm}[\w{\varphi}](\w{\x}) \label{for:asy:S}\,,
\end{align}
and
\begin{equation*}
    \K^\k_{D,\emm}[\varphi](\x) = \sum_{n = 0}^\infty \d^n k^n \K_{n,\emm}[\w{\varphi}](\w{\x}), \q   \K^{\k,*}_{D,\emm}[\varphi](\x) = \sum_{n = 0}^\infty \d^n k^n \K^*_{n,\emm}[\w{\varphi}](\w{\x}).
\end{equation*}
Moreover, by these asymptotic expansions, a similar proof to the one of \cite[Lemmas 3.1-3.2]{ammari2016plasmaxwell} yields the results in the next two lemmas.
\begin{lemma} \label{thm:asy:M}
For $\phi \in H_T^{-\frac{1}{2}}({\rm div},\p D)$, $\M_{D,\emm}^\k[\phi]$ has the following asymptotic expansion:
\begin{equation} \label{exp:asy:M}
\M_{D,\emm}^\k[\phi](\d \w{\x}) = \M_{B,\emm}^{\d \k}[\w{\phi}](\w{\x}) = \sum_{n = 0}^\infty (\d k)^n \M_{n,\emm}[\w{\phi}](\w{\x}),
\end{equation}
where $\M_{n,\emm}[\w{\phi}](\w{\x}) = \int_{\p B} \n(\w{\x}) \t \nabla_{\w{\x}} \t \Pi_{n,\emm}(\w{\x},\w{\y})\phi(\w{\y})d\sigma$, and has an uniform bound in $\L(\htd)$.
Moreover, $\M_{D,\emm}^\k$ is analytic in $\d$.
\end{lemma}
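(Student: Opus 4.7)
The plan is to establish the three claims in the lemma (scaling identity, term-by-term expansion starting from $n=0$, uniform boundedness and analyticity) in turn, leveraging the expansion (\ref{expansion:matrix:emm}) of $\Pi^{\d \k}_{\emm}$ that is already in hand.

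First, I would derive the scaling identity $\M_{D,\emm}^\k[\phi](\d \w{\x}) = \M_{B,\emm}^{\d \k}[\w{\phi}](\w{\x})$ by a direct change of variables. Writing $\x = \d \w{\x}$, $\y = \d \w{\y}$, we have $d\sigma(\y) = \d^2\, d\sigma(\w{\y})$ and $\nabla_\x = \d^{-1}\nabla_{\w{\x}}$. From (\ref{for:scal:green}) and the definition (\ref{def:kernel}), the kernels obey $\Pi^{\k}_{\emm}(\x,\y) = \d^{-1} \Pi^{\d\k}_{\emm}(\w{\x},\w{\y})$ when $\x,\y \in \p D$ with $\y^*$ transforming consistently. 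Combining the $\d^{-1}$ from the kernel with the $\d^{-1}$ from $\nabla_\x$ and the $\d^2$ from the surface measure yields a net factor of one, and $\n(\x) = \n(\w{\x})$ is invariant under the isotropic scaling, so the reduction to the reference domain follows.

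Next, I would substitute the known expansion (\ref{expansion:matrix:emm}) of $\Pi^{\d\k}_{\emm}$ into $\M_{B,\emm}^{\d\k}$. The critical point is that the series in (\ref{expansion:matrix:emm}) starts from $n=-1$, yet the asserted expansion (\ref{exp:asy:M}) starts from $n=0$. This is reconciled by noting that $G_{-1,\ze} = 0$ and $G_{-1,\zm} = i/(d_3 \tau)$ is a constant, so $\Pi_{-1,\emm}$ is either zero or has constant entries; in either case $\nabla_{\w{\x}} \t \Pi_{-1,\emm}(\w{\x},\w{\y}) \equiv 0$ and the $n=-1$ contribution drops out. Thus termwise we obtain $\M_{n,\emm}[\w{\phi}](\w{\x}) = \int_{\p B} \n(\w{\x}) \t \nabla_{\w{\x}} \t \Pi_{n,\emm}(\w{\x},\w{\y})\w\phi(\w{\y})\, d\sigma$.

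Finally, for the uniform operator bound in $\L(\htd)$ and analyticity in $\d$, I would combine two ingredients: (i) the standard mapping property that $\phi \mapsto \int_{\p B} \n \times \nabla_\x \times \Pi(\cdot,\y)\phi\, d\sigma$ is bounded from $\htd$ to $\htd$ whenever the singular part of $\Pi$ coincides with the free-space Maxwell kernel (only the $n=0$ term carries this singularity; higher $n$ yield progressively smoother kernels by the recurrence (\ref{for:Recursive relationship:green})); and (ii) explicit estimates on $G_{n,\lat}$ derived from the recurrence, which provide geometric control of the form $\norm{\M_{n,\emm}}_{\L(\htd)} \lesssim C^n/n!$ (or at worst some $C^n$), guaranteeing that the series $\sum_n (\d k)^n \M_{n,\emm}$ converges in operator norm on a neighborhood of $\d=0$ and defines a $\L(\htd)$-valued analytic function. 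This is the technical core and the step I expect to be the main obstacle, since verifying the geometric decay of the kernel coefficients requires a careful bootstrap on the recurrences for $G_{n,\lat}$ together with standard potential-theoretic estimates; but the argument parallels that of \cite[Lemmas 3.1--3.2]{ammari2016plasmaxwell} and can be imported with only cosmetic changes to accommodate the half-space boundary reflection encoded in $G^\k_\emm$.
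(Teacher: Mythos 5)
Your proposal is correct and matches the paper's approach: the paper does not spell out the argument but explicitly states that Lemmas \ref{thm:asy:M}--\ref{thm:asy:L} follow ``by these asymptotic expansions'' and ``a similar proof to the one of \cite[Lemmas 3.1--3.2]{ammari2016plasmaxwell}'', which is precisely the change of variables plus substitution of \eqref{expansion:matrix:emm} plus the observation that $\Pi_{-1,\emm}$ has constant entries (so $\nabla_{\w\x}\times \Pi_{-1,\emm}\equiv 0$) that you carry out. The only minor imprecision is the suggested bound $\norm{\M_{n,\emm}}\lesssim C^n/n!$; the lemma claims (and needs) only a bound uniform in $n$, which combined with the $(\d k)^n$ factors already gives norm-convergence and analyticity for $\d$ small, so your hedge ``or at worst some $C^n$'' is the relevant case.
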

\begin{lemma} \label{thm:asy:L}
For $\phi \in H_T^{-\frac{1}{2}}({\rm div},\p D)$, $\L_{D,\emm}^\k[\phi]$ has the asymptotic expansion:
\begin{equation*}
\L_{D,\emm}^\k[\phi](\d \w{\x}) - \L_{D,\emm}^{\k_c}[\phi](\d \w{\x}) =  \sum_{n = 1}^\infty \d^{n-1} (k^n - k_c^n) \L_{n,\emm}[\w{\phi}](\w{\x}),
\end{equation*}
where
\begin{align*}
\L_{n,\emm}[\w{\phi}](\w{\x})
= \n \times \A_{n-2,\emm}[\w{\phi}](\w{\x}) + \n \times \gg \S_{n,\emm}[\divs \w{\phi}](\w{\x}).
\end{align*}
In particular, it holds that
\begin{align*}
&\L_{1,\ze}[\w{\phi}](\w{\x}) =- \frac{i}{ \tau d_3} \n(\w{\x}) \times \int_{\p B} \w{y}_3 \e_3 \divs \w{\phi}(\w{\y})d\sigma + \n(\w{\x}) \times \gg \int_{\p B} G_{1,\ze}(\w{\x},\w{\y})\divs \w{\phi}(\w{\y})d\sigma\,, \\
&\L_{1,\zm}[\w{\phi}](\w{\x}) = - \frac{i}{\tau d_3} \n(\w{\x}) \times \int_{\p B}(\w{y}',0)^t \divs \w{\phi}(\w{\y})d\sigma
 + \n(\w{\x}) \times \gg \int_{\p B} G_{1,\zm}(\w{\x},\w{\y})\divs \w{\phi}(\w{\y})d\sigma\,.
\end{align*}
Moreover, $\L_{n,\emm}$ has an uniform bound in $\L(\htd)$, and
$\L_{D,\emm}^\k$ is analytic in $\d$.
\end{lemma}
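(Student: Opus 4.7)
The plan is to substitute the expansions~\eqref{for:asy:A}--\eqref{for:asy:S} of $\A_{B,\emm}^{\d\k}$ and $\S_{B,\emm}^{\d\k}$ into the definition of $\L_{D,\emm}^\k$ and collect powers of $\d k$. First, applying the pull-back rules $\gg_\x = \d^{-1}\gg_{\w\x}$, $\nabla_{\p D}\dd\phi = \d^{-1}\divs\w\phi$, $\A_{D,\emm}^\k[\phi](\x) = \d\A_{B,\emm}^{\d\k}[\w\phi](\w\x)$, and the analogous identity for $\S$, one obtains
\[
\L_{D,\emm}^\k[\phi](\x) = \d^{-1}\,\n(\w\x)\times\Bigl((\d k)^2\A_{B,\emm}^{\d\k}[\w\phi](\w\x) + \gg_{\w\x}\S_{B,\emm}^{\d\k}[\divs\w\phi](\w\x)\Bigr).
\]
Substituting the two series and reindexing, the coefficient of $(\d k)^m$ for $m\ge 1$ collapses to $\n\times\A_{m-2,\emm}[\w\phi] + \n\times\gg\S_{m,\emm}[\divs\w\phi] = \L_{m,\emm}[\w\phi]$, matching the claimed definition.

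Two terms require special treatment. The potentially divergent $n=-1$ piece from $\S$ vanishes automatically: $G_{-1,\ze}=0$ in the electric case, while $G_{-1,\zm}=i/(d_3\tau)$ is constant, so $\S_{-1,\zm}[\divs\w\phi]$ is a multiple of $\int_{\p B}\divs\w\phi\,d\sigma = 0$ by Stokes on the closed surface. The surviving $(\d k)^0$ contribution $\d^{-1}\n\times\gg\S_{0,\emm}[\divs\w\phi]$ is independent of $k$ and therefore cancels upon forming the difference $\L_{D,\emm}^\k - \L_{D,\emm}^{\k_c}$, which is precisely why the statement is phrased for the difference rather than for $\L_{D,\emm}^\k$ itself.

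For the explicit formulas of $\L_{1,\emm}$, I would read off $\Pi_{-1,\emm}$ column by column from $G_{-1,\ze}=0$ and $G_{-1,\zm}=i/(d_3\tau)$, which yields $\A_{-1,\ze}[\w\phi] = \f{i}{d_3\tau}\e_3\int_{\p B}\w\phi_3\,d\sigma$ and $\A_{-1,\zm}[\w\phi] = \f{i}{d_3\tau}\int_{\p B}(\w\phi_1,\w\phi_2,0)^t\,d\sigma$. Since $\w\phi$ is tangential and $\gs\w y_i$ coincides with the tangential projection of $\e_i$, surface integration by parts on the closed $\p B$ converts $\int_{\p B}\w\phi_i\,d\sigma = \int_{\p B}\gs\w y_i\dd\w\phi\,d\sigma = -\int_{\p B}\w y_i\,\divs\w\phi\,d\sigma$, producing the $\w y_3\e_3$ and $(\w y',0)^t$ factors in the stated expressions. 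Finally, the uniform $\L(\htd)$ bound on $\L_{n,\emm}$ follows from the smoothness of the kernels $\Pi_{n-2,\emm}$ and $G_{n,\emm}$ for $n\ge 1$ (ensured by the Laplace recursion $\Delta G_{n,\emm} + G_{n-2,\emm}=0$) combined with standard layer-potential mapping theory, and analyticity of $\L_{D,\emm}^\k$ in $\d$ is inherited from the uniform convergence of the series for $\A_{B,\emm}^{\d\k}$ and $\S_{B,\emm}^{\d\k}$ on a small disk about $\d=0$. The main obstacle I anticipate is cleanly tracking the $\d^{-1}$ prefactor from the scaling through both cancellations (the $n=-1$ piece and the $k$-independent $(\d k)^0$ piece), so as to confirm that the surviving expansion starts exactly at $\d^0$ with factor $(k^m-k_c^m)$; this relies structurally on the fact that the expansion of $G_\emm^{\d\k}$ begins only at order $(\d k)^{-1}$.
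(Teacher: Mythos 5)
Your proposal is correct and reconstructs, in full detail, the argument that the paper only sketches by pointing to the proof of Lemmas 3.1--3.2 in \cite{ammari2016plasmaxwell}: rescaling the definition of $\L_{D,\emm}^\k$ via the pull-back rules, substituting the series expansions \eqref{for:asy:A}--\eqref{for:asy:S}, and collecting powers of $\d k$. You correctly identify the two delicate cancellations — the $n=-1$ term vanishing because $G_{-1,\ze}=0$ and $\int_{\p B}\divs\w\phi\,d\sigma=0$, and the $k$-independent $\d^{-1}$ term dropping in the difference $\L_{D,\emm}^\k-\L_{D,\emm}^{\k_c}$ — which is exactly what forces the statement to be phrased as a difference. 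The reduction of $\A_{-1,\emm}$ to the moment integrals of $\divs\w\phi$ via $\int_{\p B}\gs\w y_i\dd\w\phi = -\int_{\p B}\w y_i\,\divs\w\phi$ recovers the stated formulas for $\L_{1,\ze}$ and $\L_{1,\zm}$ precisely. One small imprecision: for even $n\ge 2$ the kernels $G_{n,\emm}$ are not $C^\infty$ near the diagonal (e.g.\ $G_{2,\emm}$ inherits the $|\x-\y|$-type behavior of the free-space expansion), so ``smoothness of the kernels'' should rather be read as ``weak singularity no worse than that of $G_{0,\emm}$, controlled uniformly in $n$ via the Laplace recursion and elliptic regularity,'' which is what the paper itself invokes when establishing the uniform bound on $\W_{n,B}$; the conclusion is unaffected.
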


For the sake of simplicity, we write
$\M_{\emm},\K_{\emm},\K^*_{\emm}$ for the leading-order terms in the asymptotic expansions of $\M^{\d \k}_{B,\emm},\K^{\d \k}_{B,\emm},\K^{\d \k,*}_{B,\emm}$, respectively.
We emphasize that we only need the surface divergence of density $\w{\phi}$ to evaluate $\L_{1,\emm}[\w{\phi}]$, which implies immediately that
\begin{equation*}
    \csv H^{\frac{1}{2}}_0(\p B) \subset \hh \subset ker(\L_{1,\emm}),
\end{equation*}
where $\hh$ denotes the divergence free space, i.e.,
\begin{equation*}
\hh:= \{\varphi \in \htd; \ \divs \varphi = 0\}.
\end{equation*}
This observation shall be used repeatedly in Section \ref{sec:Far-field}. To have a better understanding of the terms involved in the expansions, we give the following lemma.
\begin{lemma} \label{rela:all:ml}
For any $\w{\phi} \in H_T^{-\frac{1}{2}}({\rm div},\p B)$, it holds that
\begin{enumerate}[(i)]
\item $\divs \L_{n,\emm}[\w{\phi}] = \divs (\n \times \A_{n-2,\emm})[\w{\phi}]$ for $n \ge 1$. In particular, $\divs \L_{1,\emm}[\w{\phi}] = 0$.
\item $\divs \M_{n,\emm}[\w{\phi}]  = -\K_{n,\emm}^* [\divs  \w{\phi}] - \n\dd \A_{n-2,\emm}[\w{\phi}]$ for $n \ge 1$,
while for $n = 0$,  $$\divs \M_{\emm}[\w{\phi}] = - \K^{*}_{\emm}[\divs \w{\phi}].$$
\end{enumerate}
\end{lemma}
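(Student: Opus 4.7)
The plan is to combine the explicit formulas for $\L_{n,\emm}$ from Lemma~\ref{thm:asy:L} and $\M_{n,\emm}$ from Lemma~\ref{thm:asy:M} with three ingredients already in place: the surface-calculus identities~\eqref{bapro:sur:1}--\eqref{bapro:sur:2}, the Laplace recursions~\eqref{for:Recursive relationship:green} for the kernels $G_{n,\emm}$, and the jump/trace formulas~\eqref{formu:label:vA} and~\eqref{for:trace:S}. Throughout, $\M_{n,\emm}[\w{\phi}]$ is read as the principal value, i.e.\ the average of the two one-sided limits appearing in~\eqref{formu:label:vA}, which is consistent with its membership in $\htd$.

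\textbf{Part (i).} I would substitute the formula from Lemma~\ref{thm:asy:L} to write $\L_{n,\emm}[\w{\phi}] = \n \times \A_{n-2,\emm}[\w{\phi}] + \n \times \gg \S_{n,\emm}[\divs \w{\phi}]$ and show that the second summand is surface-divergence-free. Since the tangential trace $\pi_t(\gg f)$ is continuous across $\p B$, combining~\eqref{relation:trace:l1} with~\eqref{bapro:sur:1} gives $\n \times \gg f|_{\p B} = \n \times \gs(f|_{\p B}) = -\csv(f|_{\p B})$ for any scalar $f$, after which~\eqref{bapro:sur:2} delivers $\divs \csv = 0$. For the special case $n=1$ I would separately note that $\A_{-1,\emm}[\w{\phi}]$ equals a constant vector $C \in \R^3$, a direct consequence of $G_{-1,\ze}=0$, $G_{-1,\zm}=i/(d_3 \tau)$, and the definition~\eqref{def:kernel} of $\Pi_{-1,\emm}$; applying the same device to the linear function $\ell_C(\x):=C \cdot \x$ yields $\n \times C = \n \times \gs \ell_C = -\csv \ell_C$, and hence $\divs(\n \times C) = 0$.

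\textbf{Part (ii).} The key surface identity is $\divs(\n \times X)|_{\p B} = -\n \cdot \curl X|_{\p B}$ for any $X$ smooth on one side of $\p B$; this follows by combining $\cs \pi_t(X) = -\divs(\n \times X)$ from~\eqref{bapro:sur:1} with $\cs \pi_t(X) = \n \cdot \curl X$ from~\eqref{relation:trace:curl}. I would apply it on each side of $\p B$ to the trace decomposition in~\eqref{formu:label:vA} and average, obtaining $\divs \M_{n,\emm}[\w{\phi}] = -\n \cdot \curl\curl \A_{n,\emm}[\w{\phi}]$ as a principal value. Using $\curl\curl = \gg \divs - \de$ together with the componentwise recursion $\de \Pi_{n,\emm} = -\Pi_{n-2,\emm}$ for $n \ge 1$ (direct from~\eqref{for:Recursive relationship:green}) produces $\n \cdot \de \A_{n,\emm}[\w{\phi}] = -\n \cdot \A_{n-2,\emm}[\w{\phi}]$, in which $\n \cdot \A_{n-2,\emm}[\w{\phi}]$ is continuous across $\p B$. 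It remains to identify $\divs \A_{n,\emm}[\w{\phi}] = \S_{n,\emm}[\divs \w{\phi}]$, which I would obtain by invoking the pointwise reciprocity relations between the $\x$- and $\y$-derivatives of $G^\k_\emm$ and $G^\k_\me$ established at the end of the previous subsection to convert $\w{\x}$-derivatives into $\w{\y}$-derivatives of a single scalar kernel, and then integrating by parts on the closed surface $\p B$ against the tangential density $\w{\phi}$. The normal trace of $\gg \S_{n,\emm}[\divs \w{\phi}]$, averaged over the two sides of $\p B$, equals $\K^*_{n,\emm}[\divs \w{\phi}]$ by~\eqref{for:trace:S}, delivering both claimed formulas; the $n=0$ case is the same computation with $\de \A_{0,\emm} \equiv 0$ off $\p B$ (since $\de G_{0,\emm}$ is a Dirac mass at the source), so the $\n \cdot \A_{n-2,\emm}$ contribution simply drops out.

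\textbf{Expected difficulty.} The main obstacle will be the careful bookkeeping of one-sided limits: both $\M_{n,\emm}$ and the normal derivative of $\S_{n,\emm}[\divs \w{\phi}]$ are principal-value objects, so the identity $\divs(\n \times X) = -\n \cdot \curl X$ must be applied on each side of $\p B$ separately before averaging, whereas the lower-order term $\n \cdot \A_{n-2,\emm}[\w{\phi}]$ is genuinely continuous across $\p B$ (single-layer potential for $n-2 \ge 0$, constant vector for $n-2 = -1$). Once this is organized, the remainder is a routine application of the surface-calculus identities from Section~\ref{setup} together with the pointwise derivative symmetries of the periodic Green's functions.
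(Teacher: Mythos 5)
Your proposal is correct and follows essentially the same route as the paper's own proof: part (i) reduces to showing $\divs(\n\times\gg\S_{n,\emm}[\divs\w{\phi}])=0$ via \eqref{bapro:sur:1} and \eqref{relation:trace:curl}, and part (ii) passes from $\divs\M_{n,\emm}$ to $-\gamma_n(\curl\curl\A_{n,\emm})$ by the same surface-calculus identities and then uses $\curl\curl=\gg\divs-\Delta$ with the kernel recursion. You simply spell out several intermediate steps that the paper compresses (the constant-vector treatment of $\A_{-1,\emm}$ in the $n=1$ case, the identity $\divs\A_{n,\emm}[\w{\phi}]=\S_{n,\emm}[\divs\w{\phi}]$ via the derivative-reciprocity relations and surface integration by parts, and the jump-versus-no-jump distinction between $n=0$ and $n\ge 1$), which is accurate and appropriate.
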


\begin{proof}
We first note that $\gg \S_{n,\emm}[\divs \w{\phi}] \in H({\rm curl},B)$, then obtain the property $(i)$ by using \eqref{bapro:sur:1} and (\ref{relation:trace:curl}) to see that $ \divs(\n \t \gg \S_{n,\emm}[\divs \w{\phi}]) = 0$.
For the second property, we obtain for $n \ge 1$
by means of (\ref{bapro:sur:1}) and (\ref{relation:trace:curl}) that
\begin{align*}
    &\divs\M_{n,\emm}[\w{\phi}] = \divs (\n \t \pi_t(\curl \A_{n,\emm}))[\w{\phi}] \\
 = & \cs \pi_t (\curl \A_{n,\emm})[\w{\phi}] = - \gamma_n(\curl \curl \A_{n,\emm})[\w{\phi}] \\
 = & -\K_{n,\emm}^* [\divs  \w{\phi}] - \n(\w{x})\dd \A_{n-2,\emm}[\w{\phi}].
\end{align*}
We should be more careful to deal with the case $n = 0$ due to the jump of the trace, i.e.,
\eqref{formu:label:vA}, \eqref{for:trace:S}.  But a similar calculation as the one presented above gives
$\divs \M_{\emm}[\w{\phi}] = - \K^{*}_{\emm}[\divs \w{\phi}]$.
\end{proof}

Before we move on to the next subsection on spectral analysis, we make more investigation into the leading-order terms to prepare some tools for the later use. Recalling formulas (\ref{rela:leading-ordere})-(\ref{rela:leading-orderm}),  we know that
$\K^*_{\ze} = \K_\ze^{0,*}$ with the adjoint operator $\K_\ze =\K_\ze^0$. However, $\K_\zm^*$ can only be identified
with $\K_\zm^{0,*}$ on $H_0^{-\frac{1}{2}}(\p B)$, and $\K_\zm$ here
is not the adjoint operator of $\K^*_\zm$.  Indeed, the adjoint operators of $\K^*_\zm$ and $\K_\zm$ are defined by
\begin{equation}
  \hat{\K}_\zm[\varphi](\x) = \int_{\p B}\frac{\p}{\p \n_\y} \hat{G}_{0,\zm}(\x,\y)\varphi(\y)d\sigma
  \ \text{~~and~~}\ \hat{\K}^*_\zm[\varphi](\x) = \int_{\p B}\frac{\p}{\p \n_\x} \hat{G}_{0,\zm}(\x,\y)\varphi(\y) d\sigma
\end{equation}
for smooth function $\varphi$, respectively; see \eqref{kernel:conjugate:e}-\eqref{kernel:conjugate:m} for the definition of $\h{G}_{0,\emm}$. To find the adjoint operator of $\M_{\emm}$,
we now introduce the conjugate matrix-valued function $\h{\Pi}_{\emm}$ of $\Pi_{\emm}$:
$$ \h{\Pi}_{\emm}(\x,\y) =
     \mm
     \h{G}_{0,\emm}\e_1,& \h{G}_{0,\emm}\e_2,& \h{G}_{0,\me}\e_3
     \nn(\x,\y), $$
and the associated layer potential $\h{\M}_{\emm}:$
\begin{equation*}
    \h{\M}_{\emm}[\varphi](\x) = \int_{\p B} \n(\x) \t \nabla_\x \t  \h{\Pi}_{\emm}(\x,\y)\varphi(\y)d\sigma,
\end{equation*}
which is a bounded linear operator from $\htd$ to $\htd$.
Then, the adjoint operator of $\M_{\emm}$, i.e., $\M^*_{\emm}: \htc \to \htc$, is given by
\begin{equation} \label{for:dualofm}
    \M_{\emm}^{*} = r \hat{\M}_{\me} r\,.
\end{equation}
Actually, by a standard density argument, it suffices to verify this for smooth functions.
Using  (\ref{rela:tensor:trans}) and Fubini's theorem, we can write for smooth functions $\psi, \phi$:
\begin{align*}
\l\psi, \M_{\emm}[\phi]\r  = & \int_{\p B} \int_{\p B}  \psi(\x) \dd \n(\x) \t \gg_\x \t (\Pi_{\emm}(\x,\y)\phi(\y))d\sigma(\y)d\sigma(\x)\\
= & \int_{\p B} \int_{\p B} (\gg_\x \t \Pi_{\emm}(\x,\y))^{\ts} (\psi(\x) \t \n(\x)) \dd \phi(\y) d\sigma(\y)d\sigma(\x) \\
= & \int_{\p B} \int_{\p B} (\gg_\y \t \h{\Pi}_{\me}(\y,\x)) (\psi(\x) \t \n(\x)) \dd \phi(\y) d\sigma(\x)d\sigma(\y) \\
= & \l r \hat{\M}_{\me} r[\psi] ,\phi\r.
\end{align*}
Similarly, we can get the adjoint operator $\hat{\M}^*_{\emm}: \htc \to \htc$ of $\hat{\M}_{\emm}$:
\begin{equation} \label{aux:hm:adjoint}
    \hat{\M}^*_{\emm} = r\M_{\me}r.
\end{equation}
Recall that we have proven in Lemma \ref{rela:all:ml}
\begin{equation} \label{Mdivergence}
    \divs\M_{\emm}[\varphi] = - \K^{*}_{\emm}[\divs \varphi] =  - \K^{0,*}_{\emm}[\divs \varphi].
\end{equation}
The last equality above is due to the fact that $\divs \varphi \in H_0^{-\frac{1}{2}}(\p B)$, on which $\K_{\emm}$ and $\K^{0,*}_{\emm}$ can be identical. By exactly the same arguments, we obtain
\begin{equation} \label{aux:hM:div}
    \divs \h{\M}_{\emm}[\varphi] = - \h{\K}^{*}_{\emm}[\divs \varphi]= - \K^{0,*}_{\emm}[\divs \varphi].
\end{equation}
Taking the adjoint on the both sides of (\ref{aux:hM:div}) and using (\ref{aux:hm:adjoint}), we can see that
\begin{equation} \label{Mcurl}
    \M_{\me}\csv = \csv \K^0_{\emm}.
\end{equation}

\subsection{Spectral analysis of integral operators}
In this subsection we are going to consider the spectral properties of  Neumann-Poincar\'{e} type operators, which is essential for the subsequent analysis of the blow-up order of the scattered field. We start with some basic facts, and the interested reader are referred to \cite{ammari2018mathematical,ammari2009layer,ammari2016mathematical} for more details. Considering the single layer potential $\S^0_{\emm}$, it is easy to observe that $\S_{\emm}^0 : \hsn \to \hst $ is self-adjoint, i.e., $\l\psi, \S_{\emm}^0[\phi]\r = \l\S_{\emm}^0[\psi],\phi\r$, and the Calder\'{o}n identity:
\begin{equation} \label{eq:calid}
    \S_{\emm}^0 \K_{\emm}^{0,*} = \K_{\emm}^0 \S_{\emm}^0
\end{equation}
holds in $\hsn$. However, since $\S_{\zm}^0$ is generally not invertible nor injective on $\hsn$, the standard symmetrization technique via Calder\'{o}n identity \eqref{eq:calid} is no longer applicable. Indeed, $\S_{\ze}^0$ is injective on $H^{-\frac{1}{2}}(\p B)$ while $\S_{\zm}^0$ is injective only on $H_0^{-\frac{1}{2}}(\p B)$. Moreover, the dimension of the kernel of $\S_{\zm}^0$ in $H^{-\frac{1}{2}}(\p B)$ is at most $1$ (under the assumption that $\p B$ is connected). To see this, we first observe the far-field behavior of $\S^0_{\emm}[\phi]$ from (\ref{quasi-green:k=0}), i.e.,
it holds for $\phi \in \hsn$ and large enough $x_3$ that
\begin{align} \label{far-field:single}
    \S_{\emm}^0[\phi](\x) = c_{0,\emm}(\phi) + \sum_{\xi \in \Lambda^*\backslash \{0\}}\frac{1}{|\xi|}c_{\xi,\emm}(\phi) e^{i\xi \dd x'}e^{-|\xi|x_3},
\end{align}
where the coefficients $c_{0,\ze}(\phi)$ and $c_{0,\zm}(\phi)$ are given by
\begin{align} \label{far-field:single2}
    c_{0,\ze}(\phi) = -\frac{1}{\tau}\int_{\p B} y_3 \phi(\y) d\sigma({\y}), \q c_{0,\zm}(\phi) = \frac{x_3}{\tau}\l \phi , 1 \r.
\end{align}
Then we have by using integration by parts,
\begin{equation*}
    \int_{\sss \Sigma\t(0,L)} |\nabla \S^0_{\emm}[\phi]|^2 d\x = - \int_{\p B}\phi \overline{\S^0_{\emm}}[\phi] d\sigma + \int_{\sss \Sigma \t \{L\}} \frac{\p \S^0_{\emm}[\phi]}{\p \n}\overline{\S^0_{\emm}}[\phi] d\sigma,
\end{equation*}
which, combined with \eqref{far-field:single} and \eqref{far-field:single2}, implies that, by letting $L$ tends to infinity,
\begin{equation} \label{eq:posisinge}
    \int_{\Omega} |\nabla \S^0_{\ze}[\phi]|^2 d\x = - \int_{\p B} \phi  \overline{\S^0_{\ze}}[\phi] d\sigma \ge 0
\end{equation}
holds for all $\phi \in \hsn$, while \eqref{eq:posisinge} holds only for $\phi \in H_0^{-\frac{1}{2}}(\p B)$
when $\S^0_{\ze}$ is replaced by $\S^0_{\zm}$.

There is a standard way to overcome this difficulty (cf. \cite[Theorem 2.26]{ammari2007polarization},  \cite{kang2016spectral,ando2016analysis}), whose main idea is given below for convenience.
Introduce the bounded operator $A_{\emm}:\hsn \t \C \rightarrow \hst \t \C$ by
\begin{equation*}
    A_{\emm}(\phi,a) := (\S^0_{\emm}[\phi]+a, \l \phi, 1\r),
\end{equation*}
which can be shown to have a bounded inverse. In fact,
since the Fredholm index is unchanged under compact perturbation, we can conclude that $A_{\emm}$ is Fredholm with zero index. Hence it suffices to prove the injectivity to establish the invertibility, which follows exactly from the same proof as in \cite[Theorem 2.26]{ammari2007polarization}. Then we can prove that $\so$ is invertible if and only if $\so[\varphi^{\emm}_0] \neq 0$ (cf.\cite{ando2016analysis}), where $\varphi^{\emm}_0$ is the eigenfunction of $\no$ associated with the eigenvalue $\frac{1}{2}$, satisfying $\l \varphi^{\emm}_0 , 1\r = -1$.  We now define
\begin{equation*}
\w{\S}_{\emm}^0[\psi] = \begin{cases}
\so[\psi]   &\mbox {if ~ $\l \psi ,1\r = 0 $}, \\
1   &\mbox{if ~ $\psi = \varphi^{\emm}_0$}.
\end{cases}
\end{equation*}
Then $\w{\S}_{\emm}^0$ is a bijection from $\hsn$ to $\hst$, and the generalized Calder\'{o}n identity holds:
$$\w{\S}_{\emm}^0 \no = \ko \w{\S}_{\emm}^0.$$
This allows us to define
two new inner products on $\hsn$, equivalent to the original one, such that $\no$ is self-adjoint,
$$(\phi, \psi)_{\H_{\emm}^*} =  - \l \phi ,\w{\S}^0_{\emm}[\psi] \r\,.$$
We denote by $\Hs_{\emm}$ the space $\hsn$ equipped with these two new inner products, respectively. Then we can symmetrize $\no$ as it is stated below.
\begin{lemma} \label{thm:spec:no}
For a $C^{2}$ bounded domain $B$ with a connected boundary, we have
\begin{enumerate}[(i)]
\item $\no$ is compact and self-adjoint on the Hilbert space $\H_{\emm}^*$.
\item Suppose that $(\lambda^{\emm}_j,\varphi^{\emm}_j)$ is the eigenvalue and normalized eigenfunction pair of $\no$ with $\lambda^{\emm}_0 = \frac{1}{2}$, then $\lambda^{\emm}_j\in(-\frac{1}{2},\frac{1}{2}]$ with $\lambda^{\emm}_j \to 0$ as $j \to \infty$.
\item $\{\varphi^{\emm}_j\}$ is an orthogonal basis in $\H_{\emm}^*$. More precisely, $\H_{\emm}^* = \H^*_{0,\emm} \oplus \{\mu \varphi^{\emm}_0, \mu \in \C \}$, where $\H_{0,\emm}^*$ is the zero mean subspace of $\H_{\emm}^*$ spanned by $\{\varphi^{\emm}_j\}_{j \neq 0}$.
\item The following spectral decomposition holds,
\begin{equation} \label{eq:specdecom}
  \no[\phi] =  \sum_{j = 0}^\infty \lambda^{\emm}_j (\phi, \varphi^{\emm}_j)_{\Hs_{\emm}}\varphi^{\emm}_j.
\end{equation}
\end{enumerate}
\end{lemma}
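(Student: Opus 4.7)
The plan is to adapt the classical Plemelj symmetrization to our modified single-layer operator $\w{\S}_{\emm}^0$, organizing the proof in two stages: first establish $(i)$ (compactness and self-adjointness on $\H^*_{\emm}$) and $(ii)$ (eigenvalue localization in $(-\frac{1}{2},\frac{1}{2}]$), then deduce $(iii)$ and $(iv)$ directly from the spectral theorem for compact self-adjoint operators by isolating $\varphi_0^{\emm}$ from the zero-mean subspace $\H^*_{0,\emm}$.

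First I would verify that $(\cdot,\cdot)_{\H^*_{\emm}}$ is a bona fide inner product with norm equivalent to the $H^{-1/2}$ norm. Boundedness and non-degeneracy follow from the invertibility of $\w{\S}^0_{\emm}:\hsn\to\hst$, which in turn comes from the Fredholm argument already recorded in the paper. Positivity splits along $\hsn = H^{-1/2}_0(\p B) \oplus \mathrm{span}\{\varphi_0^{\emm}\}$: on the zero-mean subspace it follows from $-\l \phi, \S^0_{\emm}[\phi]\r = \int_{\Omega}|\nabla \S^0_{\emm}[\phi]|^2 d\x$, i.e., the identity \eqref{eq:posisinge} (which holds in both the electric and magnetic cases once restricted to $H_0^{-\frac{1}{2}}(\p B)$), while on $\mathrm{span}\{\varphi_0^{\emm}\}$ it reduces to $-\l \varphi_0^{\emm}, 1\r = 1 > 0$. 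Self-adjointness is then essentially a three-line computation: chain the $L^2$-duality $\l\no[\phi],\chi\r = \l\phi,\ko[\chi]\r$ (valid on $H_0^{-\frac{1}{2}}(\p B)$ and extended to the full space by a direct check on the $\varphi_0^{\emm}$ component) with the generalized Calder\'on identity $\w{\S}^0_{\emm}\no = \ko\w{\S}^0_{\emm}$ to get $\l\no[\phi],\w{\S}^0_{\emm}[\psi]\r = \l\phi,\w{\S}^0_{\emm}\no[\psi]\r$, which is exactly $(\no[\phi],\psi)_{\H^*_{\emm}} = (\phi,\no[\psi])_{\H^*_{\emm}}$. Compactness on $\hsn$ is the classical fact that the kernel $\frac{\p}{\p\n_\x}G^0_{\emm}(\x,\y)$ is weakly singular on a $C^2$ surface, the lattice and reflection corrections against the free-space kernel being smooth; norm equivalence then transfers compactness from $\hsn$ to $\H^*_{\emm}$.

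For $(ii)$, the eigenvalue $\lambda_0^{\emm} = \frac{1}{2}$ is built into the definition of $\varphi_0^{\emm}$. The bound $|\lambda_j^{\emm}|\le \frac{1}{2}$ is the standard Plemelj consequence of the jump relation \eqref{for:trace:S}: summing $\int_B |\nabla \S^0_{\emm}[\varphi_j^{\emm}]|^2$ and $\int_{\Omega\setminus\bar B}|\nabla \S^0_{\emm}[\varphi_j^{\emm}]|^2$ via integration by parts against the interior and exterior normal derivatives produces a Rayleigh-type identity whose sign forces the spectrum into $[-\frac{1}{2},\frac{1}{2}]$. The strict inequality $|\lambda_j^{\emm}|<\frac{1}{2}$ for $j\ne 0$ follows from unique continuation applied to $\S^0_{\emm}[\varphi_j^{\emm}]$ together with the injectivity of $\w{\S}^0_{\emm}$; the endpoint $-\frac{1}{2}$ is excluded because \eqref{eq:posisinge} would then force $\S^0_{\emm}[\varphi_j^{\emm}]$ to be constant and $\w{\S}^0_{\emm}$-injectivity yields $\varphi_j^{\emm} = 0$. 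With $(i)$ and $(ii)$ in hand, the spectral theorem for compact self-adjoint operators on the Hilbert space $\H^*_{\emm}$ delivers the orthonormal eigenbasis $\{\varphi_j^{\emm}\}$; projecting onto the one-dimensional span of $\varphi_0^{\emm}$ and its $\H^*_{0,\emm}$-complement gives $(iii)$, and the spectral expansion $(iv)$ is then immediate.

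The main obstacle will be the magnetic case: the leading kernel $G_{0,\zm}(\x,\y)$ is \emph{not} symmetric (see \eqref{rela:leading-orderm}), $\S^0_{\zm}$ has a one-dimensional kernel on $\hsn$, and the operator $\K_\zm$ introduced earlier is not the $L^2$-adjoint of $\K^{0,*}_\zm$ on all of $\hsn$, agreeing with it only on $H_0^{-\frac{1}{2}}(\p B)$. The careful bookkeeping—splitting every density as its zero-mean part plus a scalar multiple of $\varphi_0^{\zm}$ before invoking each duality, symmetry, or Calder\'on identity, and systematically using the fact that $\divs\varphi$-type quantities always land in $H_0^{-\frac{1}{2}}(\p B)$—is the only genuinely subtle point. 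Once this is properly handled, the argument parallels the scalar Neumann--Poincar\'e theory developed in \cite{ammari2007polarization,ando2016analysis,kang2016spectral,ammari2016mathematical}.
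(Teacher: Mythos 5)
Your proposal follows the standard Plemelj symmetrization that the paper's cited references develop (the paper itself records only the skeleton before the lemma and defers the details), and the architecture is correct: inner product via $\w\S^0_{\emm}$, self-adjointness by chaining the $L^2$-duality with the generalized Calder\'on identity, compactness from the weakly singular kernel, and eigenvalue localization from the interior/exterior energy identities.

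There is, however, a genuine gap in your verification that $(\cdot,\cdot)_{\H^*_{\emm}}$ is an inner product. Writing $\phi = \phi_0 + \mu\varphi_0^{\emm}$ with $\phi_0 \in H_0^{-\frac{1}{2}}(\p B)$, the quadratic form $-\l\phi,\w\S^0_{\emm}[\phi]\r$ contains, besides the two terms you isolate, the cross term $-\mu\l\varphi_0^{\emm},\S^0_{\emm}[\phi_0]\r$ (together with its conjugate). You never mention it, yet it is precisely this term that governs both the Hermitian symmetry and the positivity of the sesquilinear form, and nothing in the direct-sum splitting makes it vanish a priori. It vanishes because $\S^0_{\emm}[\varphi^{\emm}_0]$ is a \emph{constant} function on $\p B$, so that
\begin{equation*}
\l\varphi_0^{\emm},\S^0_{\emm}[\phi_0]\r = \l\S^0_{\emm}[\varphi_0^{\emm}],\phi_0\r = c\,\l 1,\phi_0\r = 0 \q \text{for all } \phi_0 \in H_0^{-\frac{1}{2}}(\p B).
\end{equation*}
That constancy is not automatic in the present periodic half-space setting and needs a proof. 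The two ingredients are $\K^0_{\emm}[1] = \tfrac{1}{2}$ (true here, but one should check it directly for the quasi-periodic Dirichlet/Neumann Green's functions via Green's identity, noting that the image singularities sit strictly below $\Gamma$) and the one-dimensionality of $\ker(\tfrac{1}{2} I - \K^0_{\emm})$, which comes from the uniqueness of the exterior Neumann problem in $\Omega\backslash\bar{B}$ with the periodic and $\Gamma$ boundary conditions; Calder\'on then forces $\S^0_{\emm}[\varphi_0^{\emm}]$ into that one-dimensional space of constants. The identity $\K^0_{\emm}[1] = \tfrac{1}{2}$ is also what underlies the generalized Calder\'on identity that your self-adjointness computation rests on, so it deserves to be stated explicitly. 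Once these facts are supplied, your argument closes; a further minor point is that "unique continuation" is not really the tool you want for the endpoint $\tfrac{1}{2}$ — the same exterior Neumann uniqueness argument handles it, and the Plemelj energy identity handles $-\tfrac{1}{2}$ exactly as you sketched.
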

Similarly,
we can define the inner products on $H^{\frac{1}{2}}(\p B)$ by
 $$-\l(\w{\S}_{\emm}^0)^{-1}[\psi], \phi\r,$$
 and denote  by $\H_{\emm}$ the Hilbert space $\hst$ equipped with these two inner products respectively,
then the norm equivalence holds,
i.e., $\norm{u}_{\H_{\emm}} \approx \norm{u}_{\sss H^{\frac{1}{2}}(\p B)}$. Note that $\w{\S}_{\emm}^0$ is an unitary operator from $\H^*_{\emm}$ to $\H_{\emm}$, hence $\{\w{\S}_{\emm}^0[\varphi_j]\}$ is an orthogonal basis on $\H_{\emm}$. We point out that $\w{\S}_{\emm}^0[\varphi_j]$ is actually the eigenfunctions of $\ko$. Now we are ready to consider the leading-order terms $\K^*_{\emm}$ and $\K_{\emm}$ in the expansions of
$\K^{\d \k,*}_{B,\emm}$ and $\K^{\d \k}_{B,\emm}$,
and they can be regarded as the corrections of $\K^{0,*}_{\emm}$ and $\K^0_{\emm}$
due to the incident angle. In fact, recalling the definitions of $\K_{\emm}$ and $\K^*_{\emm}$, and using \eqref{rela:leading-ordere}-\eqref{rela:leading-orderm}, we obtain
\begin{align}
   & \K_{\ze} = \K^0_{\ze}, \q  \K_{\zm}[\phi] = \K^0_{\zm} + \frac{1}{d_3 \tau}\l d' \dd \n' \phi, 1\r, \label{for:correctinp}\\
   & \K^*_{\ze} = \K^{0,*}_{\ze}, \q   \K^*_{\zm}[\phi] = \K^{0,*}_{\zm} - \frac{d'\dd \n'}{d_3 \tau}\l\phi,1\r. \label{for:correctidou}
\end{align}
Hence the spectral structure of $\K^*_{\ze}$ can be completely characterized by Lemma \ref{thm:spec:no}.
We shall only pay attention to $\K^*_{\zm}$ below, and it turns out that its spectra has nothing to do with the incident angle although there are remaining items in \eqref{for:correctinp} and \eqref{for:correctidou} that are related. We now present several spectral results
for that we introduce some standard notation. For a compact operator $K$, we denote by $\sigma(K)$ its spectrum set and by $(\lambda I - K)^{-1}$ its resolvent operator for regular points $\lambda \in \C \backslash \sigma(K)$.
For point $p$ and set $F$ in complex plane $\C$, we define their distance
$ 
    d(p,F):=\inf_{q \in F}|p-q|\,.
$ 
\begin{theorem} \label{thm:specnp}
The operators $\K_\zm^*$ and $\K_\zm^{0,*}$ have the same spectra. Further, for $\lambda_j \in \sigma(\K^*_\zm)\backslash\{0\}$, we have
 ${\rm dim} ker(\lambda_j - \K_\zm^*) = {\rm dim} ker(\lambda_j - \K_\zm^{0,*})$.
\end{theorem}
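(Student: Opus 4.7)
The plan is to treat $\K^*_\zm$ as a rank-one perturbation of $\K^{0,*}_\zm$ and upper-triangularize it using the canonical decomposition of $\H^*_\zm$ from Lemma~\ref{thm:spec:no}. Specifically, from \eqref{for:correctidou} I would write $\K^*_\zm = \K^{0,*}_\zm + R$ with the bounded rank-one operator $R[\phi] := -(d'\dd\n'/d_3\tau)\l\phi, 1\r$. Two structural facts drive the argument: $R$ vanishes on $\H^*_{0,\zm}$ (by definition of the zero-mean subspace), and the range of $R$ lies in $\H^*_{0,\zm}$, since the divergence theorem gives $\int_{\p B}\n'\,d\sigma = 0$ and hence $\l d'\dd\n',1\r = 0$.

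Next I would invoke Lemma~\ref{thm:spec:no} to note that both $\H^*_{0,\zm}$ and $\C\varphi^\zm_0$ are $\K^{0,*}_\zm$-invariant via \eqref{eq:specdecom}, with $\K^{0,*}_\zm$ acting as multiplication by $\tfrac{1}{2}$ on the one-dimensional summand $\C\varphi^\zm_0$. Combined with the two properties of $R$ above, this produces an upper-triangular block representation of $\K^*_\zm$ relative to $\H^*_\zm = \H^*_{0,\zm}\oplus\C\varphi^\zm_0$:
$$\K^*_\zm \;=\; \begin{pmatrix} K_0 & B \\ 0 & \tfrac{1}{2} \end{pmatrix}, \qquad K_0 := \K^{0,*}_\zm|_{\H^*_{0,\zm}}, \quad B(\mu\varphi^\zm_0) := \mu R[\varphi^\zm_0].$$
Reading the spectrum off the diagonal blocks immediately yields $\sigma(\K^*_\zm) = \sigma(K_0)\cup\{\tfrac{1}{2}\} = \sigma(\K^{0,*}_\zm)$, which settles the first claim.

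For the multiplicity claim, fix a nonzero $\lambda_j \in \sigma(\K^*_\zm)$. If $\lambda_j\neq\tfrac{1}{2}$, solving $(\lambda_j-\K^*_\zm)(\phi_0,\mu) = 0$ in the block form forces $\mu = 0$ and $(\lambda_j - K_0)\phi_0 = 0$; since every $\K^{0,*}_\zm$-eigenvector at $\lambda_j$ already lies in $\H^*_{0,\zm}$, we obtain $\ker(\lambda_j-\K^*_\zm) = \ker(\lambda_j - K_0) = \ker(\lambda_j-\K^{0,*}_\zm)$. If $\lambda_j = \tfrac{1}{2}$, then simplicity of $\tfrac{1}{2}$ as an eigenvalue of $\K^{0,*}_\zm$ (built into Lemma~\ref{thm:spec:no} through the one-dimensional summand $\C\varphi^\zm_0$) gives $\tfrac{1}{2}\notin\sigma(K_0)$, so for each $\mu\in\C$ there is a unique $\phi_0 = \mu(\tfrac{1}{2} - K_0)^{-1}R[\varphi^\zm_0]$ solving $(\tfrac{1}{2} - K_0)\phi_0 = \mu R[\varphi^\zm_0]$; hence the eigenspace is one-dimensional, matching that of $\K^{0,*}_\zm$. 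I expect the only delicate point to be this last case: one must rule out the perturbation enlarging the $\tfrac{1}{2}$-eigenspace or creating a Jordan block, but the scalar $\tfrac{1}{2}$ in the lower-right corner together with invertibility of $\tfrac{1}{2} - K_0$ on $\H^*_{0,\zm}$ preclude both possibilities.
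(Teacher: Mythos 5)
Your proof is correct, and it takes a genuinely different route from the paper. The paper argues directly with eigenfunctions: it uses the relation $\hat{\K}_\zm[1]=\tfrac{1}{2}$ to deduce, first, that $\tfrac12$ is an eigenvalue of $\K^*_\zm$, and second, via
$0=\int_{\p B}(\lambda-\K^*_\zm)[\phi]\,d\sigma=(\lambda-\tfrac12)\int_{\p B}\phi\,d\sigma$, that any eigenfunction of $\K^*_\zm$ at $\lambda\neq 0,\tfrac12$ has mean zero and hence is simultaneously an eigenfunction of $\K^{0,*}_\zm$, from which the equality of spectra is read off. Your argument instead packages the same two structural facts — $R$ kills zero-mean densities and has zero-mean range, which is exactly the content of $\l\K^*_\zm[\phi],1\r=\tfrac12\l\phi,1\r=\l\K^{0,*}_\zm[\phi],1\r$ — into a block upper-triangular decomposition of the rank-one perturbation $\K^*_\zm=\K^{0,*}_\zm+R$ along $\H^*_\zm=\H^*_{0,\zm}\oplus\C\varphi^\zm_0$. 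What the block framework buys you is an explicit treatment of the eigenspace at $\lambda=\tfrac12$, where the paper's scalar computation degenerates (the factor $\lambda-\tfrac12$ becomes $0$) and the paper disposes of this case with a terse "the desired result follows"; your invertibility argument for $\tfrac12 - K_0$ on $\H^*_{0,\zm}$ makes the one-dimensionality transparent. Two small notes: your reliance on $\int_{\p B}\n'\,d\sigma=0$ is correctly justified by the divergence theorem applied to constant fields, and the worry about Jordan blocks at $\tfrac12$ is extra caution but harmless, since the theorem only concerns geometric multiplicity $\dim\ker$.
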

\begin{proof}
It is known that $\K_\zm^*$ is a compact operator with adjoint operator $\h{\K}_{\zm}$ and $\h{\K}_\zm[1] = \frac{1}{2}$ holds by definition, which implies that $\frac{1}{2}$ is also an eigenvalue of $\K_\zm^*$. Combining this with the fact that $\K_\zm^* = \K_\zm^{0,*}$ on $H^{-\frac{1}{2}}_0(\p B)$, we have
$\sigma(\K^{0,*}_\zm) \subset \sigma(\K_\zm^*)$. Suppose $\lambda \in \sigma(\K_\zm^*)\backslash\{0,\frac{1}{2}\}$ and that $\phi$ is the associated eigenfunction, then we obtain by using $\h{\K}_\zm[1] = \frac{1}{2}$ that
\begin{equation*}
 0 = \int_{\p B}(\lambda I - \K_\zm^*)[\phi]d\sigma = (\lambda - \frac{1}{2})\int_{\p B}\phi d\sigma\,,
\end{equation*}
which further yields
$\phi \in H^{-\frac{1}{2}}_0(\p B)$. We thus have $\sigma(\K^{0,*}_\zm) = \sigma(\K_\zm^*)$, and the desired result
follows.
\end{proof}
We next consider the spectral decomposition of $\K^{*}_{\zm}$ and $\K_{\zm}$ and the corresponding resolvent estimates.
Suppose that $\phi \in \Hs_{\zm}$ has the following decomposition with respect to the orthogonal basis $\{\varphi_j\}$ given by the eigensystem $\{\lambda_j,\vp_j\}$ of $\K_\zm^{0,*}$:
\begin{equation} \label{decom:ortho}
\phi = \sum_{j = 0}^\infty (\phi, \vp_j)_{\Hs_{\zm}}\vp_j.
\end{equation}
Here we have omitted the subscript or superscript $\zm$ for simplicity.
By writing its Fourier coefficients $(\phi,\vp_j)_{\Hs_{\zm}}$ as $\h{\phi}(j)$ and using Lemma \ref{thm:spec:no} and Theorem \ref{thm:specnp},
we can derive
\begin{align}
\K_\zm^*[\phi]& = \h{\phi}(0) \K_\zm^*[\varphi_0] + \sum_{n = 1}^\infty \h{\phi}(j)  \lambda_j \vp_j
 = \h{\phi}(0) (\frac{1}{2}\vp_0 + \sum_{j =1}^\infty \iota_j \vp_j) + \sum_{j = 1}^\infty \h{\phi}(j)  \lambda_j \vp_j \nb\\
& = \sum_{j = 0}^\infty  \lambda_j\h{\phi}(j) \vp_j  + \h{\phi}(0)  \sum^\infty_{j = 1} \iota_j \vp_j, \label{eq:Km*}
\end{align}
where the constants $\{\iota_j\}^\infty_{j=1}$ are obtained by applying \eqref{decom:ortho} to $-\frac{d'\dd \n'}{d_3 \tau}\l\vp_0,1\r$, i.e., $$ -\frac{d'\dd \n'}{d_3 \tau}\l\vp_0,1\r = \frac{d'\dd \n'}{d_3 \tau} = \sum_{j =1}^\infty \iota_j \vp_j.$$
\begin{proposition} \label{thm:reso:np}
For $f \in \hsn$, the following resolvent estimate holds
\begin{align*}
\norm{(\lambda I - \np)^{-1}[f]}_{\H_{\emm}^*} \lesssim \frac{\norm{f}_{\Hs_{\emm}}}{d(\lambda,\sigma(\K^{0,*}_{\emm}))}.
\end{align*}
\end{proposition}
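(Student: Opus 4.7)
The plan is to split the proof into the two cases $\emm \in \{\ze, \zm\}$ separately, because in the electric case $\K^{*}_{\ze} = \K^{0,*}_{\ze}$ is already self-adjoint on $\Hs_{\ze}$ by Lemma \ref{thm:spec:no}, so the standard spectral theorem for compact self-adjoint operators on a Hilbert space immediately gives $\norm{(\lambda I - \K^{*}_{\ze})^{-1}}_{\L(\Hs_{\ze})} = 1/d(\lambda,\sigma(\K^{0,*}_{\ze}))$. All the work therefore lies in the magnetic case, where $\K^{*}_{\zm}$ is a rank-one perturbation of $\K^{0,*}_{\zm}$ according to formula \eqref{for:correctidou}.

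For the magnetic case, I would work directly in the Fourier basis $\{\vp_j\}_{j\geq 0}$ associated with the eigensystem of $\K^{0,*}_{\zm}$. Using the representation \eqref{eq:Km*}, the equation $(\lambda I - \K^{*}_{\zm})[\phi] = f$ with $\phi = \sum_j \h{\phi}(j)\vp_j$ and $f = \sum_j \h{f}(j)\vp_j$ translates to the decoupled system
\begin{align*}
(\lambda - \tfrac{1}{2})\h{\phi}(0) &= \h{f}(0), \\
(\lambda - \lambda_j)\h{\phi}(j) - \iota_j\h{\phi}(0) &= \h{f}(j), \qquad j \geq 1,
\end{align*}
where $\lambda_0 = 1/2$ and the constants $\iota_j$ are precisely the Fourier coefficients of $\tfrac{d'\cdot\n'}{d_3\tau}$ in the basis $\{\vp_j\}_{j\ge 1}$. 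I would first solve for $\h{\phi}(0)$ from the $j=0$ equation and then back-substitute to obtain $\h{\phi}(j) = (\h{f}(j) + \iota_j\h{\phi}(0))/(\lambda - \lambda_j)$ for $j\geq 1$. By Theorem \ref{thm:specnp} we have $\sigma(\K^{*}_{\zm}) = \sigma(\K^{0,*}_{\zm})$, so $|\lambda - \lambda_j| \ge d(\lambda,\sigma(\K^{0,*}_{\zm}))$ for every $j\geq 0$.

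The estimate then follows from Parseval in $\Hs_{\zm}$ together with the Cauchy--Schwarz inequality:
\begin{equation*}
\norm{\phi}_{\Hs_{\zm}}^{2} = \sum_{j\ge 0} |\h{\phi}(j)|^{2} \lesssim \frac{\norm{f}_{\Hs_{\zm}}^{2}}{d(\lambda,\sigma(\K^{0,*}_{\zm}))^{2}} + \frac{|\h{\phi}(0)|^{2}\sum_{j\ge 1}|\iota_j|^{2}}{d(\lambda,\sigma(\K^{0,*}_{\zm}))^{2}},
\end{equation*}
and since $|\h{\phi}(0)| \le |\h{f}(0)|/d(\lambda,\sigma(\K^{0,*}_{\zm})) \le \norm{f}_{\Hs_{\zm}}/d(\lambda,\sigma(\K^{0,*}_{\zm}))$, it remains only to show $\sum_{j\ge 1}|\iota_j|^{2} < \infty$.

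This $\ell^{2}$-summability, which is the only nontrivial analytic point, follows from Parseval applied to $\tfrac{d'\cdot\n'}{d_3\tau}$ provided this function lies in the zero-mean subspace $\Hs_{0,\zm}$ spanned by $\{\vp_j\}_{j\ge 1}$. That membership is immediate from $\int_{\p B}\n\, d\sigma = 0$ (divergence theorem), together with the obvious regularity of $\n'$ which places it in $\hsn \subset \Hs_{\zm}$. The main obstacle is therefore not really analytic; it is bookkeeping, namely keeping careful track of the rank-one correction in \eqref{for:correctidou} and not confusing $\K^{*}_{\zm}$ with its restriction $\K^{0,*}_{\zm}$ to the mean-zero subspace, since on the full space the correction term shifts the $j=0$ mode into all higher modes through the coefficients $\iota_j$.
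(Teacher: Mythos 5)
Your approach is essentially the same as the paper's: decompose in the orthonormal eigenbasis $\{\vp_j\}$ of $\K^{0,*}_\zm$, solve the triangular system for $\h\phi(0)$ first, then back-substitute for $\h\phi(j)$, $j\ge1$. The electric case and the $\ell^2$-summability of $\{\iota_j\}$ are handled correctly and are indeed trivial. However, the final estimate in the magnetic case has a genuine gap: it would only prove a bound of order $d(\lambda,\sigma)^{-2}$, not the claimed $d(\lambda,\sigma)^{-1}$.

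The problem is the step where you bound $|\lambda-\lambda_j|\ge d(\lambda,\sigma(\K^{0,*}_\zm))$ for \emph{every} $j\ge0$ and then substitute $|\h\phi(0)|\le\norm{f}_{\Hs_\zm}/d(\lambda,\sigma(\K^{0,*}_\zm))$. Carrying the arithmetic through, the second term in your displayed estimate becomes
\begin{equation*}
\frac{|\h\phi(0)|^{2}\sum_{j\ge1}|\iota_j|^{2}}{d(\lambda,\sigma(\K^{0,*}_\zm))^{2}}
\;\lesssim\;\frac{\norm{f}_{\Hs_\zm}^{2}}{d(\lambda,\sigma(\K^{0,*}_\zm))^{4}},
\end{equation*}
a full extra power of $d(\lambda,\sigma)^{-1}$ compared to what you need. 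The fix is to observe that for $j\ge1$ you have $\lambda_j\in\sigma(\K^{0,*}_\zm)\setminus\{\tfrac12\}$, so the sharper bound $|\lambda-\lambda_j|\ge d(\lambda,\sigma(\K^{0,*}_\zm)\setminus\{\tfrac12\})$ applies. Together with $|\h\phi(0)|=|\h f(0)|/|\lambda-\tfrac12|$, the offending term is then
\begin{equation*}
\frac{|\h f(0)|^{2}\sum_{j\ge1}|\iota_j|^{2}}
{\bigl|\lambda-\tfrac12\bigr|^{2}\,d\bigl(\lambda,\sigma(\K^{0,*}_\zm)\setminus\{\tfrac12\}\bigr)^{2}},
\end{equation*}
and the crucial analytic fact — which you flag as mere bookkeeping but is actually the heart of the matter — is that $\tfrac12$ is an \emph{isolated} point of $\sigma(\K^{0,*}_\zm)$ (the eigenvalues $\lambda_j$ lie in $(-\tfrac12,\tfrac12]$ and accumulate only at $0$ by Lemma~\ref{thm:spec:no}). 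Isolation gives the uniform lower bound $\bigl|\lambda-\tfrac12\bigr|\cdot d\bigl(\lambda,\sigma(\K^{0,*}_\zm)\setminus\{\tfrac12\}\bigr)\gtrsim d(\lambda,\sigma(\K^{0,*}_\zm))$, since when $\lambda$ is near $\tfrac12$ the factor $d(\lambda,\sigma\setminus\{\tfrac12\})$ is bounded below, and when $\lambda$ is near some other $\lambda_k$ the factor $|\lambda-\tfrac12|$ is bounded below. This is exactly the content of the intermediate inequality in the paper's proof, and without it the estimate you derive is an order of magnitude weaker than the proposition.
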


\begin{proof}
The resolvent estimate of $\K_\ze^*$ follows directly from the fact that $\K_\ze^*$ is a compact self-adjoint operator on  $\Hs_\ze$. For $\K_\zm^*$,
considering the equation $(\lambda I -  \K_\zm^*)[\phi] = f$, we obtain from \eqref{eq:Km*} that
\begin{align*}
\sum_{j = 0}^\infty  (\lambda - \lambda_j)\h{\phi}(j)\vp_j  - \h{\phi}(0) \sum^\infty_{j = 1} \iota_j\vp_j  = \sum_{j = 0}^\infty \h{f}(j)  \vp_j.
\end{align*}
For $\lambda \notin \sigma(\K_\zm^{0,*})$, $\h{\phi}(j)$ can be uniquely determined by
\begin{align*}
\h{\phi}(0) = \frac{1}{\lambda - \frac{1}{2}}\h{f}(0)\,; \quad
\h{\phi}(j) = \frac{\h{f}(j) +\h{\phi}(0)\iota_j}{\lambda - \lambda_j}
= \frac{\h{f}(j)}{\lambda - \lambda_j} + \frac{\h{f}(0)\iota_j}{(\lambda-\lambda_j)(\lambda-\frac{1}{2})} \q \mbox{for} ~~ j \ge 1.
\end{align*}
Using the above formulas, we then derive
\begin{align*}
\norm{\phi}_{\H_\zm^*} &\lesssim \frac{\norm{f}_{\Hs_\zm}}{d(\lambda,\sigma(\K_\zm^{0,*}))} + \frac{|\l f, 1\r|}{d(\lambda,\sigma(\K_\zm^{0,*})\backslash\{\frac{1}{2}\})\dd |\lambda - \frac{1}{2}|}
\lesssim \frac{\norm{f}_{\Hs_\zm}}{d(\lambda,\sigma(\K_\zm^{0,*}))}.
\end{align*}
\end{proof}
For convenience, we shall define $\c{\psi}(j) : = (\psi,\w{\S}^0_{\zm}[\varphi_j])_{\H_{\zm}}$ for $\psi \in \H_{\zm}$.
Then we can write
\begin{equation}
    \psi = \sum_{j = 0}^\infty \c{\psi}(j)\w{\S}^0_{\zm}[\varphi_j]. \label{eq:Sem}
\end{equation}
Using this and similar arguments to the ones in the proof of Proposition\,\ref{thm:reso:np}, we can obtain the resolvent estimate of $\K_{\emm}$.
\begin{proposition} \label{thm:reso:kk}
For any $g \in \hst$, we have the resolvent estimate:
\begin{align*}
\norm{(\lambda I - \kk)^{-1}[g]}_{\H_{\emm}} \lesssim \frac{\norm{g}_{\H_{\emm}}}{dist(\lambda,\sigma(\K^0_{\emm}))}.
\end{align*}
\end{proposition}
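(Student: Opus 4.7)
My strategy mirrors the proof of Proposition \ref{thm:reso:np}, now on $\H_{\emm}$ with the orthonormal basis $\{\w{\S}_{\emm}^0[\vp_j]\}$ of eigenfunctions of $\ko$ (the eigenvalues $\lambda_j$ are inherited from $\no$ via the generalized Calder\'on identity $\w{\S}_{\emm}^0\no = \ko\w{\S}_{\emm}^0$). In the case $\emm = \ze$, formula \eqref{for:correctinp} gives $\K_\ze = \K_\ze^0$, which is self-adjoint on $\H_\ze$ by unitary equivalence to $\no$, so the resolvent estimate follows at once from the spectral theorem.

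For $\emm = \zm$, I would split $\K_\zm = \K_\zm^0 + R$, where by \eqref{for:correctinp} the correction $R[\phi] := \xi(\phi)\mathbf{1}$ is rank-one with $\xi(\phi) := (d_3\tau)^{-1}\l d' \dd \n' \phi, 1\r$ and $\mathbf{1} = \w{\S}_\zm^0[\vp_0]$. Expanding $\phi = \sum_{j \ge 0}\c{\phi}(j)\w{\S}_\zm^0[\vp_j]$ via \eqref{eq:Sem}, the resolvent equation $(\lambda I - \K_\zm)\phi = g$ reduces componentwise to
\begin{align*}
(\lambda - \tfrac{1}{2})\c{\phi}(0) - \xi(\phi) = \c{g}(0), \qquad (\lambda - \lambda_j)\c{\phi}(j) = \c{g}(j) \quad (j \ge 1).
\end{align*}
The pivotal observation is the divergence-theorem identity $\xi(\mathbf{1}) = (d_3\tau)^{-1} d' \dd \int_{\p B}\n' d\sigma = 0$; setting $\mu_j := \xi(\w{\S}_\zm^0[\vp_j])$, it gives $\mu_0 = 0$, while $\sum_{j \ge 1}|\mu_j|^2 < \infty$ because $\xi$ is bounded on $\H_\zm$. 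Consequently $\xi(\phi) = \sum_{j \ge 1}\mu_j \c{\phi}(j)$ involves no contribution from $\c{\phi}(0)$, the system is uniquely solvable whenever $\lambda \notin \sigma(\K_\zm^0)$, and one reads off
\[ \c{\phi}(j) = \frac{\c{g}(j)}{\lambda - \lambda_j}\ (j \ge 1), \qquad \c{\phi}(0) = \frac{\c{g}(0)}{\lambda - 1/2} + \frac{1}{\lambda - 1/2} \sum_{j \ge 1}\frac{\mu_j \c{g}(j)}{\lambda - \lambda_j}. \]

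Applying Cauchy--Schwarz to the tail sum and Parseval's identity $\norm{\phi}_{\H_\zm}^2 = \sum_j |\c{\phi}(j)|^2$, I would obtain an estimate of the form
\[\norm{\phi}_{\H_\zm} \lesssim \frac{\norm{g}_{\H_\zm}}{d(\lambda, \sigma(\K_\zm^0))} + \frac{\norm{g}_{\H_\zm}}{|\lambda - 1/2| \dd d(\lambda, \sigma(\K_\zm^0) \setminus \{\tfrac{1}{2}\})},\]
and close the argument exactly as in the last line of the proof of Proposition \ref{thm:reso:np}, absorbing the second summand into the first by invoking the spectral gap between the isolated eigenvalue $\tfrac{1}{2}$ and the rest of $\sigma(\K_\zm^{0,*})$ (which accumulates only at $0$). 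I expect the main obstacle to be precisely this rank-one perturbation along $\vp_0$: had $\mu_0$ failed to vanish, the correction would shift the eigenvalue $\tfrac{1}{2}$ to a pole outside $\sigma(\K_\zm^0)$, breaking $\sigma(\K_\zm) \subset \sigma(\K_\zm^0)$ and invalidating the form of the stated bound; fortunately the geometric identity $\int_{\p B}\n\, d\sigma = 0$ makes $R$ orthogonal to $\vp_0$ in the eigenbasis and preserves the spectral structure.
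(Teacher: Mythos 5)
Your argument is correct and follows essentially the same route as the paper's: expand in the orthonormal basis $\{\w{\S}_\zm^0[\vp_j]\}$ of $\H_\zm$, note via \eqref{for:correctinp} that $\K_\zm - \K_\zm^0$ is a rank-one map into the span of $\mathbf{1} = \w{\S}^0_\zm[\vp_0]$, reduce the resolvent equation to a triangular system for the Fourier coefficients, solve and estimate (your $\mu_j$ is the paper's $-\iota_j$). The one cosmetic difference is that you state explicitly the divergence-theorem identity $\int_{\p B}\n'\,d\sigma = 0$ (equivalently $\mu_0 = 0$), which the paper uses implicitly when it writes the expansion of $d'\cdot\n'/(d_3\tau)$ with the sum starting from $j=1$.
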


\begin{proof}
Again, we prove the estimate only for $\K_\zm$. It is easy to see that
\begin{align*}
\K_\zm[\psi] = &\sum\limits_{j = 0}^\infty \c{\psi}(j) \lambda_j\w{\S}^0_\zm[\vp_j] + \frac{1}{d_3 \tau} \sum\limits_{j = 0}^\infty(- d'\dd \n',\vp_j)_{\Hs_\zm} \c{\psi}(j) \\
= & \sum_{j = 0}^ \infty\c{\psi}(j)\lambda_j \w{\S}^0_\zm[\vp_j] - \sum_{j = 1}^\infty \iota_j \c{\psi}(j).
\end{align*}
Considering the equation $(\lambda I - \K_\zm)[\psi] = g$, and using \eqref{eq:Sem} we write
$$ \sum^\infty_{j=1}(\lambda - \lambda_j)\c{\psi}(j) \w{\S}^0_{\zm}[\vp_j] + \sm{1} \iota_n \c{\psi}(n) = \c{g}(0) + \sm{1} \c{g}(n)\w{\S}^0_\zm[\varphi_n]. $$
For $\lambda \notin \sigma(\K_\zm^0)$, $\c{\psi}(n)$ can be uniquely determined by
\begin{align*}
\c{\psi}(n)  = \frac{\c{g}(n)}{\lambda - \lambda_n}, \q n \ge 1\,; \quad
\c{\psi}(0)  = \frac{1}{\lambda - \frac{1}{2}} (\c{g}(0) - \sm{1} \iota_n \c{\psi}(n))
= \frac{1}{\lambda - \frac{1}{2}} (\c{g}(0) - \sm{1}\iota_n \frac{\c{g}(n)}{\lambda - \lambda_n}).
\end{align*}
%
Then we can obtain the desired estimate:
\begin{align*}
\norm{\psi}_{\H_\zm} &\lesssim \frac{\norm{g}_{\H_\zm}}{d(\lambda,\sigma(\K^0_\zm))} + \frac{\norm{g}_{\H_\zm}}{d(\lambda,\sigma(\K^0_\zm)\backslash\{\frac{1}{2}\}) \dd |\lambda - \frac{1}{2}|}
\lesssim \frac{\norm{g}_{\H_\zm}}{d(\lambda,\sigma(\K^0_\zm))}.
\end{align*}
\end{proof}

The spectral results in this subsection suggest us that in most cases, there is no need to distinguish between
$\K_{\emm}$ and $\K^0_{\emm}$, as well as between $\K^*_{\emm}$ and $\K^{0,*}_{\emm}$, since they have the same spectrum and enjoy the same resolvent estimate.
Now, we are in a position to study the spectral structure of $\M_{\emm}$.
For each $u \in \htd$, we may recall the Helmholtz decomposition to write
\begin{equation}\label{eq:two_components}
u = \gs u^{\sss (1)} + \csv u^{\sss (2)}
\end{equation}
for two functions $u^{\sss (1)}\in H^{\frac{3}{2}}_0(\p B)$ and  $u^{ \sss (2)}\in H_0^{\frac{1}{2}}(\p B)$.
This notation will be adopted from now on, and the two subspaces corresponding to $u^{\sss (1)}$ and $u^{ \sss (2)}$
may not be always specified.
By applying the invertibility of the Laplace-Beltrami operator $\lap: H_0^{\frac{3}{2}}(\p B) \to H_0^{-\frac{1}{2}}(\p B)$ and
the inverse mapping theorem, we know the existence of an isomorphism between
$\htd$ and $H_0^{\frac{3}{2}} \t H_0^{\frac{1}{2}}$, which results in an equivalent norm on $\htd$:
\begin{align*}
\norm{\phi}_{\htd} \approx \norm{\lap\phi^{\sss (1)}}_{\hsn} + \norm{\phi^{\sss  (2)}}_{\hst}.
\end{align*}

\begin{theorem} \label{thm: spec:m}
The spectra $\sigma(\M_{\emm})$ and $ \sigma(\M_{\emm}^0)$ of the operators $\M^0_{\emm}$ and $\M_{\emm}$ are given by
\begin{equation}\label{spec:Mem}
    \sigma(\M_{\emm}) = \sigma(\M_{\emm}^0) = (- \sigma(\no)\bigcup\sigma(\K_{\me}^{0,*}))\backslash\{-\frac{1}{2},\frac{1}{2}\}.
\end{equation}
\end{theorem}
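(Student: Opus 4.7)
The plan is to block-diagonalize $\M_{\emm}$ with respect to the Helmholtz decomposition
\[
\htd \;\cong\; H_0^{\frac{3}{2}}(\p B)\;\times\;H_0^{\frac{1}{2}}(\p B),\qquad (u^{\sss(1)},u^{\sss(2)})\longmapsto \gs u^{\sss(1)}+\csv u^{\sss(2)},
\]
and then read off its spectrum from the two diagonal blocks. The two ingredients already in hand are the identities
\[
\M_{\emm}\,\csv = \csv\,\K^0_{\me}\qquad\text{and}\qquad \divs\M_{\emm}[\varphi]=-\K^{0,*}_{\emm}[\divs\varphi],
\]
established in \eqref{Mcurl} and Lemma \ref{rela:all:ml} (together with the fact that $\divs\varphi\in H_0^{-1/2}(\p B)$, on which $\K^*_{\emm}$ and $\K^{0,*}_{\emm}$ coincide).

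First I would observe that the second factor $\csv H_0^{\frac{1}{2}}(\p B)$ is an invariant subspace of $\M_{\emm}$ by $\M_{\emm}\csv=\csv \K^0_{\me}$, so in the decomposition above the matrix of $\M_{\emm}$ is lower-triangular with bottom-right block similar to $\K^0_{\me}$ acting on $H_0^{\frac{1}{2}}(\p B)$. For the top-left block, writing $\M_{\emm}[\gs u^{\sss(1)}]=\gs a(u^{\sss(1)})+\csv b(u^{\sss(1)})$ and taking $\divs$ gives $\lap\,a(u^{\sss(1)})=-\K^{0,*}_{\emm}[\lap u^{\sss(1)}]$, hence $a=-\lap^{-1}\K^{0,*}_{\emm}\lap$ on $H_0^{\frac{3}{2}}(\p B)$. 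Since $\lap:H_0^{\frac{3}{2}}(\p B)\to H_0^{-\frac{1}{2}}(\p B)$ is an isomorphism, this block is similar to $-\K^{0,*}_{\emm}$ restricted to $H_0^{-\frac{1}{2}}(\p B)$. Combined with a standard Fredholm/resolvent argument for lower-triangular compact operators, this yields
\[
\sigma(\M_{\emm})=-\sigma\bigl(\K^{0,*}_{\emm}\big|_{H_0^{-1/2}}\bigr)\,\bigcup\,\sigma\bigl(\K^0_{\me}\big|_{H_0^{1/2}}\bigr).
\]

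Next I would remove the $1/2$-eigenvalues produced by the restriction to the zero-mean subspaces. By Lemma \ref{thm:spec:no}, the eigenfunction $\varphi_0^{\emm}$ of $\no$ associated with $1/2$ satisfies $\langle\varphi_0^{\emm},1\rangle=-1$, so it does not lie in $H_0^{-\frac{1}{2}}(\p B)$; therefore $\sigma(\K^{0,*}_{\emm}|_{H_0^{-1/2}})=\sigma(\no)\setminus\{1/2\}$. By duality/self-adjointness of $\no$ in $\Hs_{\emm}$ one has $\sigma(\K^0_{\me})=\sigma(\K^{0,*}_{\me})$, and the same zero-mean argument shows $\sigma(\K^0_{\me}|_{H_0^{1/2}})=\sigma(\K^{0,*}_{\me})\setminus\{1/2\}$. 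Taking the union and noting that $-\sigma(\no)\subset[-\tfrac12,\tfrac12)$ and $\sigma(\K^{0,*}_{\me})\subset(-\tfrac12,\tfrac12]$, I obtain
\[
\sigma(\M_{\emm})=\bigl(-\sigma(\no)\cup\sigma(\K^{0,*}_{\me})\bigr)\setminus\{-\tfrac{1}{2},\tfrac{1}{2}\}.
\]

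Finally, for the equality $\sigma(\M_{\emm})=\sigma(\M_{\emm}^0)$, I would follow the scheme of Theorem \ref{thm:specnp}. The difference $\M_{\emm}-\M_{\emm}^0$ stems entirely from the correction $-k'\!\cdot\!(x'-y')/(k_3\tau)$ in $G_{0,\zm}$, a smooth rank-one-type kernel; testing an eigenfunction against the constant function (as with $\hat{\K}_\zm[1]=1/2$) forces the eigenfunction of any non-trivial $\lambda\ne 1/2,-1/2$ of $\M_{\emm}$ to lie in the kernel of the correction functional, reducing the eigenvalue problem to that of $\M_{\emm}^0$, and vice versa. The main technical obstacle I anticipate is carefully justifying the spectrum-of-a-lower-triangular-operator step in the non-reflexive Sobolev setting (rather than appealing to block matrix algebra), which requires writing the resolvent $(\lambda I-\M_{\emm})^{-1}$ explicitly in block form and invoking the resolvent bounds of Propositions \ref{thm:reso:np} and \ref{thm:reso:kk} on the two factors, together with the norm equivalence $\|\phi\|_{\htd}\approx\|\lap\phi^{\sss(1)}\|_{\hsn}+\|\phi^{\sss(2)}\|_{\hst}$.
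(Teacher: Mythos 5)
Your proposal is correct and follows essentially the same route as the paper: both rest on the Helmholtz decomposition $\htd = \gs H_0^{3/2}(\p B)\oplus\csv H_0^{1/2}(\p B)$ together with the intertwining identities $\M_{\emm}\csv=\csv\K^0_{\me}$ from \eqref{Mcurl} and $\divs\M_{\emm}=-\K^{0,*}_{\emm}\divs$ from Lemma \ref{rela:all:ml}, which is precisely your block-lower-triangular structure. The paper's case analysis (eigenvectors for $\lambda\in\sigma^1_{\emm}$, then for $\sigma^2_{\emm}$, then none outside, with the endpoint $\lambda=\tfrac12$ eliminated via the Fredholm alternative) is the concrete version of your ``spectrum of a lower-triangular compact operator equals the union of the diagonal spectra'' step, and your alternative removal of $\pm\tfrac12$ by restricting to zero-mean subspaces and your rank-one-perturbation argument for $\sigma(\M_{\emm})=\sigma(\M^0_{\emm})$ are minor, valid repackagings of what the paper does.
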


\begin{proof}
We show only the spectral property of $\M_{\emm}$, as the analysis for $\M^0_{\emm}$ is similar and even simpler. Denote by $F_{\emm}$ the set in the right-hand side of \eqref{spec:Mem}.
Define
\begin{align*}
    &\sigma^1_{\emm} := F_{\emm} \cap \sigma(\K^{0,*}_{\me}), \quad
    \sigma^2_{\emm} := F_{\emm} \backslash  \sigma(\K^{0,*}_{\me}).
\end{align*}
Since $\M_{\emm}$ is a compact operator, it suffices to consider the equation
for a given $\lambda \in \C\backslash \{0\}$,
\begin{equation} \label{eq:specM}
    (\lambda I - \M_{\emm})[\phi] = 0,
\end{equation}
and prove that it has nontrivial solutions if and only if $\lambda \in \sigma^1_{\emm} \cup \sigma^2_{\emm}$.
Using  \eqref{eq:two_components}, we can write
\begin{equation*}
 \phi = \nabla_{\p B} \phi^{\sss (1)} + \csv \phi^{\sss (2)}.
\end{equation*}
For nonzero $\lambda \in \sigma^1_{\emm}$, we first note from \eqref{Mcurl} that
\begin{equation} \label{eq:sigma_1}
    (\lambda I - \M_{\emm})[\csv \phi^{\sss (2)}] = \lambda \csv \phi^{\sss (2)} - \csv\K^0_{\me}[\phi^{\sss (2)}],
\end{equation}
which directly implies that $(\lambda, \csv\phi^{\sss (2)})$ is an eigenpair of $\M_{\emm}$ if $\phi^{\sss (2)}$ is an eigenfunction of $\K^{0}_{\me}$ associated with $\lambda$. If $\lambda \in \sigma^2_{\emm}$,
 we readily obtain by using the surface divergence for \eqref{eq:specM} that
\begin{align} \label{proo:div}
\divs (\lambda I - \M_{\emm})[\phi] = (\lambda I + \no)[\divs \phi] = (\lambda I + \no)[\lap \phi^{\sss (1)}] = 0.
\end{align}
Since the eigenfunction of $ - \K^{0,*}_{\emm}$ associated with $\lambda \in \sigma^2_{\emm}$ has mean value zero and $\lap$ is an isomorphism from $H_0^{\frac{3}{2}}(\p B)$ to $H_0^{-\frac{1}{2}}(\p B)$, we know that there exists a non-constant function $\phi^{\sss (1)}$ satisfying equation \eqref{proo:div}. We then reduce \eqref{eq:specM} via \eqref{eq:sigma_1} to
\begin{equation*}
    \lambda \csv \phi^{\sss (2)} - \csv\K^0_{\me}[\phi^{\sss (2)}] = - (\lambda I - \M_{\emm})[\gs \phi^{\sss (1)}].
\end{equation*}
Taking the surface curl on both sides of the above equation, we can find it is solvable by the invertibility of $\Delta_{\p B}$ and $\lambda I - \K^0_{\me}$. Hence, there exists a nontrivial $\phi$ satisfying equation \eqref{eq:specM} for $\lambda \in \sigma^2_{\emm}$. We are now in a position to consider the last case: $\lambda \in \C\backslash (\sigma^1_{\emm} \cup \sigma^2_{\emm})$. It is easy to derive that $\phi$ must be $\csv \phi^{\sss (2)}$ for some $\phi^{\sss (2)}$ by the invertibility of $\lambda I + \K^{0,*}_{\emm}$ on  $H_0^{-\frac{1}{2}}(\p B)$. Then the reduced equation from \eqref{eq:sigma_1} reads as follows:
$$(\lambda I - \K^0_{\me})[\phi^{\sss (2)}] = C,$$
by using the invertibility of $\lap$, where $C$ is some constant. Without loss of generality, we assume that $C =1$ or $0$. If $\lambda = \frac{1}{2}$, we must have $C = 0$ in order to guarantee the existence of $\phi^{\sss (2)}$ due to the Fredholm alternative.  In this case, we have $\phi = \csv \phi^{\sss (2)} = 0$. If $\lambda \neq \frac{1}{2}$, we can find a constant $C'$ such that
\begin{equation*}
(\lambda I - \K_{\me}^0)[\phi^{\sss (2)} + C'] = 0,
\end{equation*}
which yields $\phi^{\sss (2)}$ is a constant. Hence, if $\lambda \in \C\backslash (\sigma^1_{\emm} \cup \sigma^2_{\emm})$,
we can conclude $\phi = 0$, hence completes the proof.
\end{proof}

\section{Approximation of the scattered wave}\label{sec:Far-field}
\subsection{Integral formulation and asymptotic analysis}
With the analytical tools and results established in the previous section, we shall first reformulate the system (\ref{model}) into an boundary integral equation, then build up a norm estimate of the
associated solution operator, from which we can predict the occurrence of the resonance phenomenon.
Taking advantage of the vector potential $\A^\k_{D,\emm}$ given in the section {\ref{sec:layer potential}}, we assume
the following ansatz for the electric field solution of (\ref{model}):
\begin{equation*}
E = \begin{cases}
\ei + \curl \A_{D,\zm}^\k[\phi] + \curl \curl \A_{D,\ze}^\k[\psi]\,, & \mbox {$\x \in \R^3 \backslash \D$}
\\
\muc \curl \A_{D,\zm}^{\k_c}[\phi] + \curl \curl \A_{D,\ze}^{\k_c}[\psi]\,, & \mbox {$\x \in \D$}.
\end{cases}
\end{equation*}
It can be checked directly that the field $E$ given above
solves the Maxwell equations in both $\D$ and $\R_+^3\backslash \D$,
and satisfies the perfect conducting boundary condition on $\Gamma$. Then by the jump formula (\ref{formu:label:vA}), the original scattering problem can be equivalently written as a boundary integral equation on $\p D$:
\begin{equation*}
\mm
\frac{\muc + 1}{2} + \muc \M_{D,\zm}^{\k_c}- \M_{D,\zm}^{\k} & \L_{D,\ze}^{\k_c}-\L_{D,\ze}^\k \\
\L_{D,\zm}^{\k_c}-\L_{D,\zm}^\k &
\frac{k^2}{2}(\eec + 1)I + k^2 (\eec \M_{D,\ze}^{\k_c} - \M_{D,\ze}^\k)
\nn
\mm
\phi
\\
\psi
\nn =
\mm
\nt \ei\\
i k \nt \hi
\nn.
\end{equation*}
By setting $\x = \d \w{\x}$, we obtain an integral equation defined on $\p B$:
\begin{equation}
\W_{\d,B}
\mm
\w{\phi}\\
\w{\psi}
\nn
=
\mm
\n(\w{\x})\times \w{E}^i \\
ik \n(\w{\x}) \times \w{H}^i
\nn, \label{eq:WdeltaB}
\end{equation}
where the block coefficient matrix is given by
\begin{equation*}
\W_{\d,B} =
\mm
\frac{\muc + 1}{2} + \muc \M_{B,\zm}^{\d \k_c} - \M_{B,\zm}^{\d \k} &  \L_{\ze,\d}^{\k_c} - \L_{\ze,\d}^\k \\
\L_{\zm,\d}^{\k_c} - \L_{\zm,\d}^\k  & k^2(\frac{\eec+ 1}{2}I + \eec \M_{B,\ze}^{\d \k_c}-\M_{B,\ze}^{\d  \k})
\nn.
\end{equation*}
By
Lemma \ref{thm:asy:M} and \ref{thm:asy:L}, we have the asymptotic expansion of $\W_{\d,B}$:
\begin{equation*}
\W_{\d,B} = \sum_{n =0}^\infty \d^n \W_{n,B},
\end{equation*}
where
\begin{equation*}
\W_{0,B} =
\mm
\frac{\muc + 1}{2} + (\muc - 1)\M_\zm  & (k_c - k)\L_{1,\ze} \\
(k_c - k)\L_{1,\zm}  &  k^2\frac{\eec+ 1}{2}I + k^2(\eec - 1)\M_{\ze}
\nn,
\end{equation*}
and for $n \ge 1$,
\begin{equation*}
\W_{n,B} =
\mm
(k_c^n - k^n)\M_{n,\zm} & (k_c^{n + 1} - k^{n+1})\L_{n+1,\ze} \\
 (k_c^{n+1} - k^{n+1})\L_{n+1,\zm} &  k^2(\varepsilon_c k_c^n-k^n)\M_{n,\ze}
\nn \,.
\end{equation*}
Before we turn to finding the approximate scattered wave, we investigate the property of the leading-order term $\W_{0,B}$ first.
From now on, we introduce two contrast parameters $\lm(\omega)$ and $\lee(\omega)$:
$$\lm(\omega) = \frac{1 + \muc(\omega)}{2(1 - \muc(\omega))}, \q  \lee(\omega) = \frac{1 + \eec(\omega)}{2(1 - \eec(\omega))}.$$
\begin{theorem} \label{thm: perp : W0B}
Suppose $\lambda_\mu(\omega), \lambda_\varepsilon(\omega) \notin \sigma(\M_{\emm})$, then $\W_{0,B}$ is invertible with the estimate:
\begin{equation} \label{esti:norm:wob}
     \norm{\W^{-1}_{0,B}} \lesssim \frac{1}{d_\sigma d^*_\sigma},
\end{equation}
where the two constants $d_\sigma$ and $d^*_\sigma$ are defined by
\begin{align*}
& d_\sigma = min\{d(\lm,\sigma(\K_\ze^0)),d(\lee,\sigma(\K_\zm^0))\}, \q
d^*_\sigma = min\{d(\lm,-\sigma(\K_\zm^{0,*})),d(\lee,-\sigma(\K_\ze^{0,*}))\}.
\end{align*}
\end{theorem}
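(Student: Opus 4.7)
The plan is to exploit the block structure of $\W_{0,B}$ together with two key features of the off-diagonal blocks recorded in Lemma~\ref{rela:all:ml}: the operator $\L_{1,\emm}$ depends on its argument only through the surface divergence, so $\csv H_0^{\frac{1}{2}}(\p B)\subset\ker\L_{1,\emm}$, and moreover $\divs\L_{1,\emm}=0$. Combined with the commutation relation $\M_{\me}\csv=\csv\K^0_{\emm}$ from \eqref{Mcurl}, these properties let me decouple the $2\times 2$ block system into two scalar resolvent equations for the divergence parts and two independent resolvent equations for the divergence-free parts. A direct calculation using the definitions of $\lm$ and $\lee$ first rewrites the diagonal blocks as $(1-\muc)(\lm I-\M_{\zm})$ and $k^2(1-\eec)(\lee I-\M_{\ze})$, so that solving $\W_{0,B}(\phi,\psi)^{T}=(f,g)^{T}$ reduces to two coupled equations that I treat in Steps~2 and~3 below.

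Step~2 (divergence level). Applying $\divs$ to each equation kills the $\L_{1,\emm}$ contributions and, by Lemma~\ref{rela:all:ml}, turns $\divs\M_{\emm}$ into $-\K^{0,*}_{\emm}\divs$. This yields the decoupled equations $(1-\muc)(\lm I+\K^{0,*}_{\zm})[\divs\phi]=\divs f$ and $k^2(1-\eec)(\lee I+\K^{0,*}_{\ze})[\divs\psi]=\divs g$. Proposition~\ref{thm:reso:np} then determines $\divs\phi$ and $\divs\psi$ with resolvent factors $1/d(\lm,-\sigma(\K^{0,*}_{\zm}))$ and $1/d(\lee,-\sigma(\K^{0,*}_{\ze}))$, both bounded by $1/d^*_\sigma$. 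Through $\lap\phi^{\sss(1)}=\divs\phi$ this also fixes the gradient parts $\gs\phi^{\sss(1)},\gs\psi^{\sss(1)}$ with the same factor.

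Step~3 (curl level). Since $\L_{1,\emm}$ acts only through the divergence, $\L_{1,\ze}[\psi]=\L_{1,\ze}[\gs\psi^{\sss(1)}]$ and $\L_{1,\zm}[\phi]=\L_{1,\zm}[\gs\phi^{\sss(1)}]$ are now known. Moving them to the right-hand side and using \eqref{Mcurl}, the system reduces to
\[
(1-\muc)\,\csv(\lm I-\K^0_{\ze})[\phi^{\sss(2)}]=R_1,\qquad k^2(1-\eec)\,\csv(\lee I-\K^0_{\zm})[\psi^{\sss(2)}]=R_2,
\]
with $R_1,R_2$ depending only on $(f,g)$ and the already-estimated $\gs\phi^{\sss(1)},\gs\psi^{\sss(1)}$. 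A compatibility check (applying $\divs$ and invoking Step~2) shows $\divs R_1=\divs R_2=0$, so each $R_j$ lies in the range of $\csv$ and the equation becomes a resolvent equation for $\K^0_{\ze}$, respectively $\K^0_{\zm}$. Proposition~\ref{thm:reso:kk} then yields $\phi^{\sss(2)},\psi^{\sss(2)}$ with factors $1/d(\lm,\sigma(\K^0_{\ze}))$ and $1/d(\lee,\sigma(\K^0_{\zm}))$, each bounded by $1/d_\sigma$. Combining via the norm equivalence $\|\phi\|_{\htd}\approx\|\lap\phi^{\sss(1)}\|_{\hsn}+\|\phi^{\sss(2)}\|_{\hst}$ and using that $d_\sigma$ is bounded from above (since $\sigma(\K^0_{\emm})\subset(-\tfrac12,\tfrac12]$ while $\lm,\lee$ stay in a bounded region), the gradient contribution $O(1/d^*_\sigma)$ and the curl contribution $O(d_\sigma^{-1}(1+(d^*_\sigma)^{-1}))$ assemble into the claimed bound $\|\W_{0,B}^{-1}\|\lesssim 1/(d_\sigma d^*_\sigma)$.

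The main obstacle I anticipate is Step~3: verifying that $R_1,R_2$ are divergence-free so that inverting $\csv$ is legitimate, and organizing the estimates so that $d_\sigma$ and $d^*_\sigma$ enter multiplicatively rather than additively. Both issues are resolved by using the divergence equations from Step~2 as a compatibility identity when pushing $\gs\phi^{\sss(1)}$ and $\gs\psi^{\sss(1)}$ through the operators $\lm I-\M_{\zm}$ and $\lee I-\M_{\ze}$ on the right-hand side.
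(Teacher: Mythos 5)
Your proof is correct and follows essentially the same strategy as the paper's: both use the Helmholtz decomposition of $\htd$, take the surface divergence to decouple the gradient components via Lemma~\ref{rela:all:ml} and Proposition~\ref{thm:reso:np}, then use the commutation relation \eqref{Mcurl} together with Proposition~\ref{thm:reso:kk} to handle the curl components. The only cosmetic difference is that the paper applies the surface scalar curl $\cs$ to the reduced equation (which kills the gradient part of the right-hand side automatically) rather than checking $\divs R_j=0$ and inverting $\csv$; your compatibility check, resting on the divergence equations from Step~2, is a valid alternative to the same end, and your remark that $d_\sigma$ is bounded above is a correct and helpful clarification of why the bounds combine multiplicatively.
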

\begin{proof}
Without loss of generality, we assume that $\muc \neq 1$, $\eec \neq 1$ and consider the system
$$\W_{0,B}\mm \phi \\ \psi \nn = \mm (1-\muc)f \\ (1-\eec) g\nn $$
for given $f,g \in \htd$, which is equivalent to the following two equations:
\begin{equation}\label{in:theoprf:sys}
(\lambda_\mu I - \M_\zm)[\phi] + \frac{k_c- k}{1 - \muc} \L_{1,\ze}[\psi] = f\,, \q
\frac{k_c - k}{k^2(1 - \varepsilon_c)}\L_{1,\zm}[\phi] + (\lambda_\varepsilon I - \M_{\ze})[\psi] = g.
\end{equation}
We shall reduce (\ref{in:theoprf:sys}) to some easily solved subproblems by using the Helmholtz decomposition.
To do so, we take the surface divergence on both sides of two equations in \eqref{in:theoprf:sys},
then use formula (\ref{Mdivergence}) to obtain
\begin{align*}
(\lambda_\mu + \K_\zm^{0,*})[\divs \phi] = \divs f,  \q (\lambda_\varepsilon + \K_\ze^{0,*})[\divs \psi] = \divs g\,,
\end{align*}
which, along with the fact that $\divs u = \lap u^{\sss (1)}$ for any $u \in \htd$, yields
\begin{align}
\phi^{\sss (1)} = \lap^{-1}(\lambda_\mu + \K_\zm^{0,*})^{-1}(\lap f^{\sss (1)}), \q
\psi^{\sss (1)} = \lap^{-1}(\lee + \K_\ze^{0,*})^{-1}(\lap g^{\sss (1)}).
\end{align}
Then it follows directly from Proposition \ref{thm:reso:np} that
\begin{align*}
\norm{\lap \phi^{\sss (1)}}_{\Hs_\zm} \lesssim \frac{\norm{\lap f^{\sss (1)}}_{\Hs_\zm}}{d(\lm, - \sigma(\K^{0,*}_\zm))}, \q
\norm{\lap \psi^{\sss (1)}}_{\Hs_\ze} \lesssim \frac{\norm{\lap g^{\sss (1)}}_{\Hs_\ze}}{d(\lee, - \sigma(\K^{0,*}_\ze))} .
\end{align*}

Next, we solve the second component $\phi^{\sss (2)}$. To this purpose, we use (\ref{Mcurl}) and write the first equation in (\ref{in:theoprf:sys}) as
\begin{align*}
& (\lambda_\mu I - \M_\zm)[\csv\phi^{\sss (2)}] = \csv (\lm I - \K^0_e)[\phi^{\sss (2)}]
=  f - \frac{k_c- k}{1 - \muc} \L_{1,\ze}[\psi] - (\lm I - \M_\zm)[\gs \phi^{\sss (1)}].
\end{align*}
Taking the surface scalar curl on both sides of the equation and then applying Proposition \ref{thm:reso:kk} gives
\begin{align*}
\norm{\phi^{\sss (2)}}_{\H_\ze} &\lesssim \frac{\norm{f^{\sss (2)}}_{\H_\ze}}{d(\lm,\sigma( \K^0_{\ze}))} + \frac{\norm{\lap g^{\sss (1)}}_{\Hs_\ze}}{d(\lm,\sigma( \K^0_{\ze})) \dd d(\lee, - \sigma(\K^{0,*}_\ze))} \\
& + \frac{\norm{\lap f^{\sss (1)}}_{\Hs_\zm}}{d(\lm,\sigma( \K^0_{\ze})) \dd d(\lm, - \sigma(\K^{0,*}_\zm))}.
\end{align*}
Similarly, we can compute $\psi^{\sss (2)}$ and derive the estimate
\begin{align*}
\norm{\psi^{\sss (2)}}_{\H_\zm} &\lesssim \frac{\norm{g^{\sss (2)}}_{\H_\zm}}{d(\lee,\sigma(\K_\zm^0))} + \frac{\norm{\lap f^{\sss (1)}}_{\Hs_\zm}}{ d(\lm, - \sigma(\K^{0,*}_\zm)) \dd d(\lee,\sigma(\K_\zm^0))} \\
& + \frac{\norm{\lap g^{\sss (1)}}_{\Hs_\ze}}{d(\lee, - \sigma(\K^{0,*}_\ze)) \dd d(\lee,\sigma(\K_\zm^0))}.
\end{align*}
Now the above arguments conclude the uniquely solvability of
the system (\ref{in:theoprf:sys}), and the desired estimate (\ref{esti:norm:wob}).
\end{proof}

\begin{remark}
If we restrict the operator $\W_{0, B}$ on $\hh \t \hh$, then $\W_{0, B}$ has a diagonal form
\begin{equation*}
\W_{0,B} = \mm
\frac{\muc + 1}{2}I + (\muc - 1) \M_\zm^0 & 0 \\
0  &  k^2\frac{\eec+ 1}{2}I + k^2(\eec - 1)\M_\ze^0
\nn\,,
\end{equation*}
and it is an isomorphism on $\hh \t \hh$,  with the estimate
$\norm{\W_{0,B}^{-1}} \lesssim {1}/{d_\sigma}$.
\end{remark}

By the recurrence relation (\ref{for:Recursive relationship:green}) and the elliptic regularity, we conclude that $\W_{n,B}$
are uniformly bounded with respect to $n$, hence leading to the uniform operator convergence:
$$\lim_{\d \to 0} \W_{\d, B}^{-1} = \W^{-1}_{0,B}.$$
Therefore, there exists a $\d_0>0$ such that the following equivalence holds for $\d \le \d_0$:
\begin{equation*}
\norm{\W_{\d, B}^{-1}} \approx \norm{\W^{-1}_{0,B}}.
\end{equation*}
Combining this with Theorem\,\ref{thm: perp : W0B},
we observe directly that at some specified frequencies,
the norm of the solution operator $\norm{\W_{\d, B}^{-1}}$ may blow up with order
${(d_\sigma d_\sigma^*)^{-1}}$, which indicates the existence of resonances.
\subsection{Approximate scattered field}
We are now in a position to discuss how to approximate the scattered field with a certain order.
In view of the complexities and technicalities of the detailed computations and relevant estimates,
we split this section into three parts to make it more readable.
The main result of this section is given in Theorem \ref{thm:mainresult}.

\textbf{Approximate kernel and density.}
Motivated by the well-known two-scale asymptotic expansion method in the standard homogenization theory\cite{allaire1999boundary}, we shall first separate the propagative component from the scattered wave in the macroscopic scale. To this purpose, we observe from (\ref{quasi-green}) that the quasi-periodic Green's function $G^\k_{\emm}$ consists of
a propagating mode:
\begin{equation} \label{def:propagating mode}
    G^\k_{p,\emm}(\x,\y) = \frac{i}{2\tau k_3} e^{ik'\dd (x'-y')- ik_3|x_3-y_3|} \pm \frac{i}{2 \tau k_3}e^{ik'\dd (x'-y')- ik_3|x_3+y_3|},
\end{equation}
and an exponentially decaying mode: $G^\k_{e,\emm}:= G^\k_{\emm} - G^\k_{p,\emm}$.
Replacing $G^\k_{\emm}$ with $G^\k_{p,\emm}$ and $G^\k_{e,\emm}$, we can define $\Pi_{p,\emm}$ and $\Pi_{e,\emm}$ respectively as what we did in \eqref{def:kernel}. Therefore we can write
$\A^\k_{B,\emm}[\phi]  = \A^\k_{p,\emm}[\phi] + \A^\k_{e,\emm}[\phi]$, where
\begin{align*}
\A_{p,\emm}^\k[\phi] = \int_{\p B}  \Pi_{p,\emm}^\k(\x,\y)\phi(\y) d\sigma\,, \q
\A_{e,\emm}^\k[\phi] = \int_{\p B}  \Pi_{e,\emm}^\k(\x,\y)\phi(\y) d\sigma .
\end{align*}
We now define the propagative part and evanescent part of the scattered wave in the reference space respectively by
\begin{eqnarray}
&&\w{E}^r_{p}(\w{\x}) = \curl \A^{\d\k}_{p,\zm}[\w{\phi}](\w{\x}) + \frac{1}{\d} \curl \curl \A^{\d\k}_{p,\ze}[\w{\psi}](\w{\x}),
\label{for:def:erp} \\
&&\w{E}^r_{e}(\w{\x}) = \curl \A^{\d\k}_{e,\zm}[\w{\phi}](\w{\x}) + \frac{1}{\d} \curl \curl \A^{\d\k}_{e,\ze}[\w{\psi}](\w{\x}),
\end{eqnarray}
for densities $\w{\phi}$ and $\w{\psi}$ satisfying the system \eqref{eq:WdeltaB}.
We can see, from the definition of $G^\k_{e,\emm}$, that the structure of the evanescent wave is much more complicated than the one of the propagative part. Fortunately, we only care about the wave in the far field, where the effect of the evanescent wave can be ignored. Indeed, we have the following approximation estimate.
\begin{lemma}\label{lem:Erp}
Fix a constant $L\in \R^+$ such that $|\xd| < L$ for all $\x \in D$.
Then for small enough $\d$, there exists some positive constant
$c$ independent of $\d$ such that
$$
\sup_{\x \in \R^2 \t (L,+\infty)}|E^r(\x) - \w{E}^r_p(\frac{\x}{\d})| = O(\frac{1}{\d}e^{-\frac{c L}{\d}}).
$$
\end{lemma}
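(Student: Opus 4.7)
The plan is to use the splitting $G^{\d\k}_{\emm} = G^{\d\k}_{p,\emm} + G^{\d\k}_{e,\emm}$ to reduce the claim to an exponential-decay estimate on the evanescent component $\w E^r_e$. By the scaling identity \eqref{for:asy:A} and the chain rule $\gg_{\x} = \d^{-1}\gg_{\w\x}$, the scattered field in reference coordinates reads
\begin{equation*}
E^r(\x) \;=\; \curl_{\w\x}\A^{\d\k}_{B,\zm}[\w\phi](\w\x) \;+\; \frac{1}{\d}\curl_{\w\x}\curl_{\w\x}\A^{\d\k}_{B,\ze}[\w\psi](\w\x).
\end{equation*}
Splitting each $\A^{\d\k}_{B,\emm}$ according to $\Pi^{\d\k}_{p,\emm}$ versus $\Pi^{\d\k}_{e,\emm}$ and comparing with the definition \eqref{for:def:erp}, one gets the identity $E^r(\x) - \w E^r_p(\w\x) = \w E^r_e(\w\x)$ for every $\w\x = \x/\d$ outside $\D$. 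Everything is thereby reduced to bounding $\w E^r_e$ when $\w x_3 > L/\d$.

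The kernel $G^{\d\k}_{e,\emm}(\w\x,\w\y)$ is, by \eqref{quasi-green}, a Rayleigh--Bloch series over $\xi \in \Lambda^*\setminus\{0\}$ in which each term carries a factor $e^{-\alpha_\xi|\w x_3 \mp \w y_3|}$ with $\alpha_\xi = \sqrt{|\xi+\d k'|^2 - (\d k)^2}$. For $\d$ small enough that $\d|k| \le \tfrac{1}{2}\min_{\xi \neq 0}|\xi|$, one has $\alpha_\xi \ge c_0|\xi|$ with $c_0>0$ depending only on $\Lambda^*$. Since $\p B$ is bounded there exists $R>0$ with $|\w y_3| \le R$ for all $\w\y \in \p B$, so for $\w x_3 \ge L/\d$ and $\d$ small, $\w x_3 \mp \w y_3 \ge L/(2\d)$. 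Extracting from each mode a factor $e^{-\alpha_\xi L/(4\d)}$ and using the remaining half together with $|\xi|^m e^{-c_0|\xi|L/(4\d)} \lesssim 1$ to absorb any polynomial growth, the Rayleigh--Bloch sum for $G^{\d\k}_{e,\emm}$ and its derivatives up to order two satisfies
\begin{equation*}
|\partial^\alpha_{\w\x} G^{\d\k}_{e,\emm}(\w\x,\w\y)| \;\lesssim\; e^{-cL/\d}, \qquad |\alpha|\le 2,
\end{equation*}
uniformly in $\w\y \in \p B$, for some $c>0$ independent of $\d$.

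Substituting this bound into
\begin{equation*}
\w E^r_e(\w\x) \;=\; \curl_{\w\x}\A^{\d\k}_{e,\zm}[\w\phi](\w\x) \;+\; \frac{1}{\d}\curl_{\w\x}\curl_{\w\x}\A^{\d\k}_{e,\ze}[\w\psi](\w\x),
\end{equation*}
and using that the fixed densities $\w\phi,\w\psi$ are bounded in $\htd$, the first summand is $O(e^{-cL/\d})$ (one derivative of the kernel) while the second is $O(\d^{-1}e^{-cL/\d})$, the extra $\d^{-1}$ being the explicit prefactor appearing in the definition of $\w E^r_e$. This dominant contribution yields the claimed bound $\sup_{\x \in \R^2 \times (L,+\infty)}|E^r(\x) - \w E^r_p(\x/\d)| = O(\d^{-1} e^{-cL/\d})$. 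The main technical obstacle is the termwise differentiation of the Rayleigh--Bloch series and the absorption of the polynomial factors in $|\xi|$ produced by the curls; this is handled by trading a portion of the exponential decay for absolute convergence, which is legitimate precisely because $\w x_3 \ge L/\d$ is vastly larger than the $O(1)$ bound on $|\w y_3|$ on $\p B$.
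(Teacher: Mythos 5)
Your proof takes essentially the same route as the paper: reduce to bounding the evanescent part $\w E^r_e$, exploit the exponential decay $e^{-\alpha_\xi(\w x_3-\w y_3)}$ in the Rayleigh--Bloch series for $\w x_3 > L/\d$, and absorb the polynomial growth in $\xi$ coming from the curls by spending part of the exponential. This is the correct idea and the decomposition $E^r(\x) - \w E^r_p(\x/\d) = \w E^r_e(\x/\d)$ is the same identity the paper starts from.

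One point is glossed over: the density $\w\phi$ lies in $\htd$, a \emph{negative}-order Sobolev space, so the expression $\int_{\p B} K(\w\x,\w\y)\w\phi(\w\y)\,d\sigma$ is a duality pairing, not a pointwise integral. A uniform $L^\infty_{\w\y}$ bound on $K(\w\x,\cdot)$ together with $\norm{\w\phi}_{\htd}<\infty$ does not by itself control this pairing; one needs the kernel bounded in a dual norm (e.g.\ $H^{1/2}_T(\p B)$, or $H^1(B)$ followed by the trace theorem), which requires exponential bounds on $\partial^\alpha_{\w\y}$-derivatives of $K(\w\x,\cdot)$, not just $\partial^\alpha_{\w\x}$. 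This is precisely the technical content the paper supplies: it separates variables $\Pi^{\d\k}_{e,\emm}(\w\x,\w\y) = -\frac{1}{2\tau}\sum_\xi \rho_\xi^{\d\k}(\w\x)\pi_\xi^{\d\k}(\w\y)$, shows $\norm{(\pi_\xi^{\d\k})_j}_{H^1(B)}$ is uniformly bounded in $\xi$ (using the fixed gap $h$ between $\p B$ and the plane $\Sigma$), and then invokes Cauchy--Schwarz and the trace inequality to pair with $\w\phi$. Your pointwise-on-the-kernel shortcut can be repaired by the same trick --- the Rayleigh--Bloch terms decay equally fast under $\w\y$-differentiation, and one trades a portion of $e^{-\alpha_\xi L/\d}$ to dominate all the resulting $|\xi|^m$ factors --- but as written the step from the kernel bound to the bound on $\w E^r_e$ is not justified for a distributional density. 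Everything else (the uniform lower bound $\alpha_\xi\gtrsim|\xi|$ for small $\d$, the origin of the extra $\d^{-1}$, the absorption of polynomial growth) agrees with the paper's proof.
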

\begin{proof}
By the scaling property \eqref{for:asy:A} of $\A^\k_{D,\emm}$, we have
\begin{align} \label{proof:repre:ee}
    \w{E}^{r}(\w{\x}) - \w{E}^r_{p}(\w{\x}) = \w{E}^r_{e}(\w{\x}) =  \curl \A^{\d \k}_{e,{\rm m}}[\w{\phi}](\w{\x}) + \frac{1}{\d} \curl \curl \A^{\d \k}_{e,{\rm e}}[\w{\psi}](\w{\x}).
\end{align}
We now estimate these two terms. For large enough $\w{x}_3$, we
can separate the variable of the kernel $\Pi_{e,\emm}^{\d \k}$ involved in the definition of $\A^{\d \k}_{e,\emm}$:
\begin{equation} \label{def:sepker}
    \Pi_{e,\emm}^{\d \k}(\w{\x},\w{\y}) =
- \frac{1}{2 \tau} \sum_{\xi \in \Lambda^*\backslash \{0\}} \rho^{\d \k}_\xi(\w{\x}) \pi_{\xi,\emm}^{\d \k}(\w{\y})\,,
\end{equation}
where $\rho^{\d \k}_\xi(\w{\x})$ is given by
\begin{equation} \label{def:rhoker}
    \rho^{\d \k}_\xi(\w{\x}) :=  \frac{1}{\sqrt{|\xi+ \d k'|^2 - (\d k) ^2}}e^{i(\xi + \d k')\dd \w{\xb}}e^{-\sqrt{|\xi+ \d k'|^2 -(\d k)^2} (\w{x}_3 - h)}.
\end{equation}
Here $h$ is a constant satisfying $|\w{x}_3| < h < \frac{L}{\d}$ for $\w{\x} \in B$, and then $\pi_{\xi,\emm}^{\d \k}(\w{\y})$ can be introduced naturally and determined uniquely by \eqref{def:sepker} and \eqref{def:rhoker}.
We note that $\Pi_{e,\emm}^{\d \k}$ is a diagonal matrix, and its diagonal entries
are all smooth functions.
For the first term in (\ref{proof:repre:ee}), we can write
\begin{align} \label{eq:auxfir_1}
     \curl \A^{\d \k}_{e,{\rm m}}[\w{\phi}](\w{\x}) =\int_{\p B}\curl \Pi^{\d \k}_{e,\zm}(\w{\x},\w{\y})\w{\phi}(\w{\y})d\w{\y},
\end{align}
where
\begin{align} \label{eq:auxfir_2}
 \curl \Pi^{\d \k}_{e,\zm}(\w{\x},\w{\y}) =
    - \frac{1}{2 \tau} \sum_{\xi \in \Lambda^*\backslash \{0\}}  \gg \rho^{\d \k}_\xi(\w{\x}) \t \pi_{\xi,\zm}^{\d \k}(\w{\y}).
\end{align}
For $\rho^{\d \k}_\xi(\w{\x})$, we can see the existence of a positive constant $c$ such that the following estimate holds
for all $\xi \in \Lambda^*\backslash \{0\}$, uniformly with respect to all small enough $\d$:
$$|\p_j \rho^{\d \k}_\xi(\w{\x})| \lesssim e^{-c|\xi|(\w{x}_3 - h)}\,.$$
Using this together with the Cauchy's inequality and the trace inequality, we derive
\begin{align*}
   & |\int_{\p B} \gg \rho^{\d \k}_\xi(\w{\x}) \t \pi_{\xi,\zm}^{\d \k}(\w{\y}) \phi(\w{\y}) d\sigma(\w{\y})|
    \lesssim  e^{-c|\xi|(\w{x}_3-h)} \norm{\phi}_{\sss \hsn}  \sum^3_{j = 1}\norm{(\pi^{\d \k}_{\xi,\zm})_j}_{\sss \hst} \\
    \lesssim &  e^{-c|\xi|(\w{x}_3-h)} \norm{\phi}_{\sss \hsn}  \sum^3_{j = 1}\norm{(\pi^{\d \k}_{\xi,\zm})_j}_{\sss H^1(B)}
   \lesssim    e^{-c|\xi|(\w{x}_3-h)} \norm{\phi}_{\sss \hsn},
\end{align*}
where we have used the uniform boundedness of $\norm{(\pi^{\d k}_{\xi,\zm})_j}_{H^1(B)}$
with respect to $\xi \in  \Lambda^*\backslash \{0\}$. Now it follows easily from the above estimate and \eqref{eq:auxfir_1}-\eqref{eq:auxfir_2} that
\begin{equation*}
    |\curl \A^{\d k}_{e,{\rm m}}[\w{\phi}](\w{\x})| \lesssim e^{- c  (\w{x}_3 - h)} \norm{\phi}_{\hsn}.
\end{equation*}
Similarly, we can establish a desired pointwise estimate of the second term in \eqref{proof:repre:ee}.
Then the application of the above pointwise estimate of the two terms
in \eqref{proof:repre:ee} and a direct computation leads to the desired error estimate in Lemma\,\ref{lem:Erp}.
\end{proof}

To further derive a proper approximation of the propagative scattered wave $\w{E}^r_{p}$, it suffices to approximate the propagative kernel
$\Pi^{\d \k}_{p,\emm}$ and the two densities $\w{\psi}, \w{\phi}$. We consider the approximation of the integral kernel first, for which
we define two linear operators $\hat{\A}^{\d \k}_{p,\emm}[\w{\phi}]$ and $\app{\emm}^{\d \k}[\w{\phi}]$
for $\w{\phi} \in \htd$:
\begin{align*}
\hat{\A}^{\d \k}_{p,\emm}[\w\phi](\w{\x}) &:= \int_{\p B} [g_{\emm} \e_1, g_{\emm} \e_2, g_{\me} \e_3](\w{\y}) \w{\phi}(\w{\y})
d\sigma ,\\
\app{\emm}^{\d  \k}[\w{\phi}](\w{\x}) &:= e^{i\d \k^* \dd \w{\x}}
 \int_{\p B} [g_{\emm} \e_1, g_{\emm} \e_2, g_{\me} \e_3](\w{\y}) \w{\phi}(\w{\y}) d\sigma ,
\end{align*}
where $g_{\emm}(\w{\y})$ is given by 
\begin{equation*}
    g_\ze(\w{\y}) = - \frac{\w{y}_3}{\tau}\,, \q g_\zm(\w{\y}) = \frac{i}{\tau \d k_3} + \frac{d'\dd \w{y}'}{d_3 \tau}.
\end{equation*}
We can see that $e^{i \d  \k^*\dd \w{\x}} g_{\emm}(\w{\y})$ are
 good approximations of $G^{\d \k}_{p,\emm}$ (cf.\eqref{def:propagating mode}), by noting the fact that
 $$\frac{i e^{-i \d \k^* \dd \w{\y}}}{2 \tau \d k_3} = \frac{i}{2\tau \d k_3} +  \frac{\di^*\dd \w{\y}}{2 d_3\tau} + O(\d).$$
Then by a direct verification, we have the following estimate.
 %
\begin{lemma} \label{app:reflect}
It holds for small enough $\d$ and $\xd \ge L$ (the constant given in Lemma\,\ref{lem:Erp}), and all $\w{\phi}, \w{\psi} \in  \htd$ that
\begin{align*}
|\curl (\A_{p,\zm}^{\d \k} - \app{\zm}^{\d  \k})[\w{\phi}]|(\w{\x}) + \frac{1}{\d}|\curl \curl (\A_{p,\ze}^{\d \k} - \app{\ze}^{\d  \k})[\w{\psi}]|(\w{\x}) \lesssim \d^2  \Big\{\norm{\w{\phi}}_{\htd}  + \norm{\w{\psi}}_{\htd}\Big\}.
\end{align*}
\end{lemma}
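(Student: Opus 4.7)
The plan is to exploit the fact that when $\w{x}_3 \geq L/\d$, the absolute values in the propagative Green's function $G^{\d\k}_{p,\emm}$ disappear, so that the entire $\w{\x}$-dependence of the matrix kernel $\Pi^{\d\k}_{p,\emm}$ collapses into a single plane-wave factor $e^{i\d\k^*\cdot\w{\x}}$. Concretely, using $|\w{x}_3 \pm \w{y}_3| = \w{x}_3 \pm \w{y}_3$ for $\w\y\in\p B$ and large $\w{x}_3$, a direct computation from \eqref{def:propagating mode} yields
\[
G^{\d\k}_{p,\zm/\ze}(\w{\x},\w{\y}) \;=\; \frac{i\,e^{i\d\k^*\cdot\w{\x}}}{2\tau\d k_3}\bigl(e^{-i\d\k^*\cdot\w{\y}} \pm e^{-i\d\k\cdot\w{\y}}\bigr),
\]
so $\Pi^{\d\k}_{p,\emm}(\w{\x},\w{\y}) = e^{i\d\k^*\cdot\w{\x}}\,M^{\d}_{\emm}(\w{\y})$ for an explicit diagonal matrix $M^{\d}_{\emm}$ depending only on $\w{\y}$, and $\app{\emm}^{\d\k}$ has exactly the same structural form but with a $\d$-independent diagonal $M_{\emm}(\w{\y})=\operatorname{diag}(g_\emm,g_\emm,g_\me)(\w\y)$.

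Next, Taylor-expanding the brackets above in $\d$ and using $\k^*+\k=(2k',0)$ and $\k^*-\k=(0,-2k_3)$, the zeroth- and first-order terms in $\d$ inside the brackets reproduce respectively the $1/(\d k_3)$ piece and the $\w{y}$-linear piece of $g_\emm$, so that
\[
\Pi^{\d\k}_{p,\emm}(\w{\x},\w{\y}) - e^{i\d\k^*\cdot\w{\x}}M_{\emm}(\w{\y}) \;=\; \d\,e^{i\d\k^*\cdot\w{\x}}R^{\d}_{\emm}(\w{\y}),
\]
where $R^{\d}_{\emm}$ is smooth on $\p B$ with $C^k$-norms bounded uniformly in $\d$ (by the usual bounds on Taylor remainders of $e^{-i\d\k\cdot\w\y}$ for $\w\y\in\p B$). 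Hence restriction to $\p B$ gives $\|R^{\d}_{\emm}\|_{H^{1/2}(\p B)}\lesssim 1$, uniformly in small $\d$.

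Because the whole $\w\x$-dependence is now a pure plane wave, $\curl$ acts on the difference simply by multiplication by $i\d\k^*\times$, costing exactly one factor of $\d$. Using the $H_T^{-1/2}(\mathrm{div},\p B)$–$H_T^{1/2}(\mathrm{curl},\p B)$ duality,
\[
\bigl|\curl(\A_{p,\zm}^{\d\k}-\app{\zm}^{\d\k})[\w\phi]\bigr|(\w\x) \;\leq\; \d\,|\k^*|\,\bigl|e^{i\d\k^*\cdot\w\x}\bigr|\cdot \d\,\Bigl|\!\int_{\p B}\!R^{\d}_{\zm}(\w\y)\w\phi(\w\y)\,d\sigma\Bigr| \;\lesssim\; \d^{2}\,\norm{\w\phi}_{\htd}.
\]
For the electric term, applying $\curl$ twice contributes a factor $O(\d^{2})$ from the two exponential differentiations, which combined with the $1/\d$ prefactor leaves a net $\d$; multiplied by the $O(\d)$ kernel difference this yields the same $O(\d^{2})$ bound on $\d^{-1}|\curl\curl(\A_{p,\ze}^{\d\k}-\app{\ze}^{\d\k})[\w\psi]|$, and summing proves the lemma.

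The only delicate point is the Taylor step: because the propagative kernel has a $1/\d$ prefactor, one must carry the expansion of $e^{-i\d\k^{\ast}\cdot\w\y}\pm e^{-i\d\k\cdot\w\y}$ through \emph{both} the zeroth and first orders of $\d$, using precisely the identities for $\k^{\ast}\pm\k$, in order to cancel all terms absorbed by $g_\emm$ and obtain a genuinely $O(\d)$ residual. A naive zeroth-order match would leave an $O(1)$ residual after the $1/\d$ prefactor and produce only an $O(\d)$ bound, losing the sharp $\d^{2}$ precision needed for the subsequent far-field and boundary-condition analysis.
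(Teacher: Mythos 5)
Your proof is correct and is essentially the "direct verification" that the paper indicates: you factor out the plane-wave $\w\x$-dependence from the propagative kernel once the absolute values resolve (valid for $\w x_3 \ge L/\d$, which you correctly identify as the intended reading of $\xd \ge L$), then Taylor-expand the $\w\y$-dependent brackets and check that the $\d^{-1}$ and $\d^0$ orders are exactly absorbed by $g_\emm$ via the identities $\k^*+\k=(2k',0)$, $\k^*-\k=(0,-2k_3)$; this is precisely the single displayed expansion the paper cites. The subsequent bookkeeping — each $\curl$ acting as multiplication by $i\d\k^*\times$, the $O(\d)$ residual kernel dualized against $\w\phi$, and the $\d^{-1}$ prefactor offset by the two curls in the electric term — correctly yields $O(\d^2)$ in both pieces.

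One small misstatement in your prose: you call the diagonal $M_\emm(\w\y)=\operatorname{diag}(g_\emm,g_\emm,g_\me)(\w\y)$ "$\d$-independent," but $g_\zm$ contains the $i/(\tau\d k_3)$ term, so $M_\emm$ is not $\d$-independent; the point (which your subsequent Taylor step in fact establishes) is that it captures the $\d^{-1}$ and $\d^0$ content of $M^\d_\emm$ exactly, leaving an $O(\d)$ residual with uniformly bounded $H^{1/2}(\p B)$ norm. This does not affect the computation or the conclusion.
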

To proceed our approximation, we define
the Green's tensor $\G_{r}^{\d \k}(\w{\x})$ associated with the propagative mode $g_r^{\d \k}(\w{\x}) = e^{i\d\k^*\dd\w{\x}}$:
\begin{align}
    \G_r^{\d \k}(\w{\x})  = g_r^{\d \k}(\w{\x})\I + \frac{1}{\d^2 k^2}\gg^2 g_r^{\d \k}(\w{\x})
     =  (\I - \di^* \otimes \di^*)g_r^{\d \k}(\w{\x}). \label{eq:proker}
\end{align}
We remark that the matrix $\I - \di^* \otimes \di^*$ is the projection on the orthogonal complement of the linear space spanned by $\di^*$.
Using this, we can directly check that
\begin{align*}
    \curl \A^{\d \k}_{p,\zm,0}[\w{\phi}] = \curl \G_r^{\d \k}
     \hat{\A}^{\d \k}_{p,\zm}[\w{\phi}], \q
     \curl \curl \A^{\d \k}_{p,\ze,0}[\w{\psi}] = (\d k)^2 \G_r^{\d \k}\hat{\A}^{\d \k}_{p,\ze}[\w{\phi}],
\end{align*}
which, together with (\ref{for:def:erp}) and Lemma \ref{app:reflect}, results in the asymptotic expansion
\begin{equation} \label{app:reflect:err}
    \w{E}^r_p =  \curl \G_r^{\d \k} \hat{\A}^{\d \k}_{p,\zm}[\w{\phi}]  + \d k^2 \G_r^{\d \k}\hat{\A}^{\d \k}_{p,\ze}[\w{\phi}] + O(\d^2).
\end{equation}

Now we intend to work out the leading-order terms in  the densities $\w{\phi}$ and $\w{\psi}$.
Using the Taylor expansion of the incident wave
\begin{equation*}
\mm
\n(\w{\x}) \t \w{E}^i(\w{\x})\\ ik \n(\w{\x}) \t \w{H}^i(\w{\x})
\nn = \sum_\beta \frac{\d^{|\beta|}}{\beta !} \mm \n(\w{\x}) \t \w{\x}^\beta \p^\beta E^i(0) \\ ik \n(\w{\x}) \t \w{\x}^\beta \p^\beta H^i(0)  \nn\,,
\end{equation*}
we can write
\begin{align} \label{for:expan:density1}
\w{\phi} = \sum_{\beta } \frac{\d^{|\beta|}}{\beta!}\w{\phi}_\beta, \q  \w{\psi} = \sum_\beta \frac{\d^{|\beta|}}{\beta!}\w{\phi}_\beta,
\end{align}
by setting that
\begin{equation} \label{for:equ:phibeta}
\W_{\d,B} \mm  \w{\phi}_\beta \\ \w{\psi}_\beta \nn(\w{\x}) = \mm \n(\w{\x}) \t \w{\x}^\beta \p^\beta E^i(0) \\ ik \n(\w{\x}) \t \w{\x}^\beta \p^\beta H^i(0)  \nn.
\end{equation}
We should note from (\ref{for:equ:phibeta}) that $\phi_\beta$ and $\psi_\beta$ still depend on $\d$.
Indeed, recalling the expansion
$$\W_{\d,B} = \sum^\infty_{n = 0} \d^n \W_{n,B} =: \W_{0,B} - \d \W_{r,B}, $$
when ${\d}/{(d_\sigma d^*_\sigma)}$ is small enough, we can expand $\W^{-1}_{\d,B}$ in Neumann series
\begin{align}
    \W^{-1}_{\d,B} = (I - \d \W^{-1}_{0,B} \W_{r,B})^{-1} \W^{-1}_{0,B} = \sum^\infty_{n = 0} \d^n (\W^{-1}_{0,B}\W_{r,B})^n \W^{-1}_{0,B}.
\end{align}
This shows that
\begin{equation} \label{for:expan:density2}
    \w{\phi}_{\beta} = \sum^\infty_{j = 0} \d^j \w{\phi}_{\beta,j} \q and \q  \w{\psi}_{\beta} = \sum^\infty_{j = 0} \d^j \w{\psi}_{\beta,j},
\end{equation}
where $\w{\phi}_{\beta,j}$ and $\w{\psi}_{\beta,j}$ are determined by
\begin{align}
    \mm \w{\phi}_{\beta,j} \\ \w{\psi}_{\beta,j} \nn = (\W^{-1}_{0,B}\W_{r,B})^j\W^{-1}_{0,B} \mm \n(\w{\x}) \t \w{\x}^\beta \p^\beta E^i(0) \\ ik \n(\w{\x}) \t \w{\x}^\beta \p^\beta H^i(0) \nn.
\end{align}
For simplicity, we write $\w{\phi}_{\beta,0}$ with $|\beta| = 1$ below as $\w{\phi}_{j,0}$ for $j=1,2,3$.
Then it follows from expansions (\ref{for:expan:density1}) and (\ref{for:expan:density2}) that
\begin{align}
    \w{\phi} = \w{\phi}_{0,0} + \d \w{\phi}_{0,1} + \d \sum^3_{j = 1}\w{\phi}_{j,0} + O(\d^2)\,, \q
    \w{\psi} = \w{\psi}_{0,0} + \d \w{\psi}_{0,1} + \d \sum^3_{j = 1}\w{\psi}_{j,0} + O(\d^2).
\end{align}
The error terms are measured in $\htd$.
Substituting these expansions
into the approximate scattered field (\ref{app:reflect:err}), we can further write
\begin{align}
    \w{E}^r_{p}(\w{\x}) = & \curl \G_{r}^{\d \k} (\int_{\p B}\frac{i + \d k d' \dd \w{y}' }{\tau \d k_3} (\w{\phi}',0)^t  d\sigma - \int_{\p    B}\frac{\w{y}_3}{\tau}\w{\phi}_3\e_3 d\sigma)  \notag  \\
    & +  \d k^2 \G_r^{\d \k} (\int_{\p B}-\frac{\w{y}_3}{\tau}(\w{\psi}',0)^t  d\sigma + \int_{\p B} \frac{i + \d k d' \dd \w{y}'}{\tau \d k_3}\w{\psi}_3\e_3  d\sigma) + O(\d^2) \notag \\
    = & \curl \G_r^{\d \k}( \frac{i}{\tau  k_3}\int_{\p B} (\w{\phi}'_{0,1} + \sum_{j =1}^3 \w{\phi}'_{j,0},0)^t  d\sigma + \int_{\p B}\frac{d' \dd \w{y}'}{\tau d_3}(\w{\phi}'_{0,0},0)^t  d\sigma) - \int_{\p B}\frac{\w{y}_3}{\tau} (\w{\phi}_{0,0})_3\e_3  d\sigma)   \notag  \\
     & +  \d k^2 \G_r^{\d \k} (\int_{\p B}-\frac{\w{y}_3}{\tau}(\w{\psi}'_{0,0},0)^t  d\sigma + \int_{\p B} \frac{ d' \dd \w{y}'}{\tau d_3}(\w{\psi}_{0,0})_3\e_3  d\sigma  \notag \\
      & +   \frac{i}{\tau k_3} \int_{\p B} (\w{\psi}_{0,1})_3\e_3 + \sum_{j =1}^3 (\w{\psi}_{j,0})_3\e_3 d\sigma +  O(\d^2)\,,
      \label{appro:reflection}
\end{align}
where the superscript $t$ denotes the transport of a vector.

\textbf{Computing the leading-order densities.}
We readily see from \eqref{for:equ:phibeta} that the zero order terms $\w{\phi}_{\beta,0}$ and $\w{\psi}_{\beta,0}$
in (\ref{for:expan:density2}) satisfy the following equation:
\begin{equation*}
\W_{0,B}
\mm
\w{\phi}_{\beta,0} \\
\w{\psi}_{\beta,0}
\nn(\w{\x}) =
\mm
\n(\w{\x}) \t \w{\x}^\beta \p^\beta \ei(0) \\
i k \n(\w{\x}) \t \w{\x}^\beta \p^\beta \hi(0)
\nn,
\end{equation*}
which has already been studied in (\ref{in:theoprf:sys}), with the solutions given by
\begin{equation}  \label{form:potenzero:1}
    \w{\phi}_{\beta,0} =(\lm - \M_\zm)^{-1} (\frac{\n(\w{\x})\times \w{\x}^\beta \p^\beta \ei(0)}{1 -\muc} + f_\beta),
\end{equation}
\begin{equation} \label{form:potenzero:2}
    \w{\psi}_{\beta,0} = (\lambda_\varepsilon - \M_\ze)^{-1}(\frac{i\n(\w{\x})\times \w{\x}^\beta \p^\beta \hi(0) }{k(1-\varepsilon_c)} + g_\beta),
\end{equation}
where $f_\beta$  and $g_\beta$ are defined by
\begin{equation*}
f_\beta :=  \frac{k - k_c}{1 -\muc} \L_{1,\ze}[\w{\psi}_{\beta,0}], \q g_\beta := \frac{k - k_c}{k^2(1- \varepsilon_c )}\L_{1,\zm}[\w{\phi}_{\beta,0}].
\end{equation*}
In particular, when $\beta = 0$, we know that $\w{\phi}_{0,0}$ and $\w{\psi}_{0,0}$ are divergence-free
using the facts that $\divs (\n \t \ei(0)) = 0$ and $\divs (\n \t \hi(0)) = 0$.
Further, the first-order terms $\w{\phi}_{0,1}$ and $\w{\psi}_{0,1}$ can be determined by
\begin{equation*}
\W_{0,B}
\mm
\w{\phi}_{0,1} \\
\w{\psi}_{0,1}
\nn + \W_{1,B}
\mm
\w{\phi}_{0,0} \\
\w{\psi}_{0,0}
\nn  = 0.
\end{equation*}
More precisely, this can be written componentwise as
\begin{align}
& (\lm - \M_\zm) [\w{\phi}_{0,1}] +  \frac{k_c -k}{1 -\muc} \L_{1,\ze}[\w{\psi}_{0,1}] + \frac{k_c - k}{1 - \muc}
\M_{1,\zm}[\w{\phi}_{0,0}] + \frac{k_c^2-k^2}{1-\muc}
\L_{2,\ze}[\w{\psi}_{0,0}] = 0, \label{Equ:potential one}\\
&\frac{k_c - k}{k^2(1-\varepsilon_c)}\L_{1,\zm}[\w{\phi}_{0,1}] + (\lambda_\varepsilon - \M_\ze)[\psb{1}] + \frac{k_c^2-k^2}{k^2(1-\eec)}\L_{2,\zm}[\pb{0}] + \frac{\eec k_c - k}{1-\eec}\M_{1,\ze}[\psb{0}] = 0. \label{Equ:potential one+}
\end{align}
We can see that $\w{\phi}_{0,1}$ and $\w{\psi}_{0,1}$ can be completely
determined by the above equations once $\w{\phi}_{0,0}$ and $\w{\psi}_{0,0}$ are solved. But noting
\begin{equation} \label{rela:useinte}
    \int_{\p B}\w{\phi}(\w{\y})  d\sigma = - \int_{\p B}\w{\y} \divs \phi(\w{\y})  d\sigma,
\end{equation}
we know that it suffices to find the surface divergence of $\w{\phi}_{0,1}$ and $\w{\psi}_{0,1}$
in order to compute (\ref{appro:reflection}).
We thus take the surface divergence on both sides of equations \eqref{Equ:potential one}
and \eqref{Equ:potential one+} to deduce that
\begin{align}
& (\lm + \K_\zm^{0,*})[\divs \w{\phi}_{0,1}] = \frac{k_c^2 - k^2}{1 - \muc} \gamma_n( \curl
\A_\ze[\w{\psi}_{0,0}]), \label{form:surdiv:phi} \\
& (\lee + \K_\ze^{0,*})[\divs \w{\psi}_{0,1}] =
\frac{k_c^2 - k^2}{k^2(1-\eec)}\gamma_n(\curl \A_\zm[\phi_{0,0}]), \label{form:surdiv:psi}
\end{align}
by using Lemma \ref{rela:all:ml}.
To facilitate our further computings, we follow \cite[Lemma 5.5]{ammari2016surface}
and introduce two harmonic systems with appropriate interface conditions to represent the quantities
$\curl \A_\ze[\w{\psi}_{0,0}]$ and $\curl \A_\zm[\w{\phi}_{0,0}]$
involved in \eqref{form:surdiv:phi} and \eqref{form:surdiv:psi} in terms of gradients:
\begin{equation} \label{sys:aux:ele}
    \begin{cases}
    \Delta u = 0,   \q & \text{in} \q  \Omega , \\
    (\n \dd \nabla u)|_- = (\n \dd \nabla u)|_+,  \q & \text{on} \q \p B , \\
   \muc (\n \t \gg u)|_- - (\n \t \gg u)|_+ =
    \n \t
    \ei(0), \q & \text{on} \q  \p B , \\
    u - u_\infty \ \text{is exponentially decaying}, \q &\text{as} \q x_3 \to \infty \\
    u = 0, \q  &\text{on} \q \Sigma \\
    \text{$u$ satisfies the periodic boundary condition} & \text{on} \q \p \Omega \backslash \Sigma,
    \end{cases}
\end{equation}
and
\begin{equation} \label{sys:aux:mag}
    \begin{cases}
    \Delta v = 0,   \q &\text{in} \q  \Omega , \\
    (\n \dd \nabla v)|_- = (\n \dd \nabla v)|_+,  \q &\text{on} \q \p B , \\
    \eec(\n \t \gg v)|_- - (\n \t \gg v)|_+ =
    \frac{i}{k} \n \t
    \hi(0), \q  &\text{on} \q  \p B , \\
    v  \ \text{is exponentially decaying}, \q &\text{as} \q x_3 \to \infty , \\
    \frac{\p v}{\p x_3} = 0, \q &\text{on} \q \Sigma , \\
    \text{$v$ satisfies the periodic boundary condition} & \text{on} \q \p \Omega \backslash \Sigma,
    \end{cases}
\end{equation}
where $u_\infty$ is a complex constant, and the solutions to these two systems
are denoted by $u^e$ and $u^h$, respectively.
The solutions to these two systems may not necessarily be unique,
but their gradients can be uniquely determined, as shown in the following lemma.
\begin{lemma} \label{lem:interpret:zeroorder}
For $\curl \A_\zm[\w{\phi}_{0,0}]$ and $\curl \A_\ze[\w{\psi}_{0,0}]$, it holds that,
\begin{align}  \label{lemma:aux:elec}
\gg u^e  =  \curl \A_\zm^0[\w{\phi}_{0,0}]  =
\begin{cases}
\frac{1}{1-\muc}\gg \S_\ze^0 (\lm - \K_\ze^{0,*})^{-1}[\n \dd \ei(0)]) \q &\text{in} \q \Omega \backslash B ,\\
\frac{1}{\muc}\ei(0) + \frac{1}{\muc(1-\muc)}\gg \S_e^0 (\lm - \K_\ze^{0,*})^{-1}[\n \dd \ei(0)]) \q &\text{in} \q B,
\end{cases}
\end{align}
and
\begin{align}  \label{lemma:aux:mag}
\gg u^h  = \curl \A_\ze^0 [\w{\psi}_{0,0}]
 =  \begin{cases}
\frac{i}{k(1-\eec)} \gg \S_\zm^0 (\lee - \K_\zm^{0,*})^{-1}[\n \dd \hi(0)] \q & \text{in} \q \Omega \backslash B ,\\
\frac{i}{k \eec} \hi(0) + \frac{i}{k\eec(1-\eec)}\nabla \S_\zm^0(\lee - \K_\zm^{0,*})^{-1}[\n \dd \hi(0)] \q & \text{in} \q B.
\end{cases}
\end{align}
\end{lemma}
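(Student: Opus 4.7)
The strategy is to identify $\curl \A_\zm^0[\w{\phi}_{0,0}]$ with the gradient of a harmonic function solving the transmission system \eqref{sys:aux:ele}, and then to solve that system explicitly via a single-layer ansatz; the argument for $\w{\psi}_{0,0}$ is entirely parallel, with the perfect conducting boundary condition on $\Sigma$ replaced by the perfect magnetic one.

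First, I would verify that $\w{\phi}_{0,0}$ and $\w{\psi}_{0,0}$ are divergence-free. Taking $\divs$ of \eqref{in:theoprf:sys} with $\beta=0$, using Lemma~\ref{rela:all:ml}(i) (which gives $\divs\L_{1,\emm}=0$) and (ii) (which gives $\divs\M_\emm=-\K_\emm^{0,*}\divs$), together with $\divs(\n\t\ei(0))=0$, yields $(\lm+\K_\zm^{0,*})[\divs\w{\phi}_{0,0}]=0$, whence $\divs\w{\phi}_{0,0}=0$ since $\lm\notin\sigma(-\K_\zm^{0,*})$; similarly $\divs\w{\psi}_{0,0}=0$. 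Because $\L_{1,\emm}$ vanishes on $\hh$ (noted after Lemma~\ref{thm:asy:L}), the remainder terms $f_0, g_0$ in \eqref{form:potenzero:1}--\eqref{form:potenzero:2} drop out, leaving $\w{\phi}_{0,0}=(1-\muc)^{-1}(\lm-\M_\zm)^{-1}[\n\t\ei(0)]$ and analogously for $\w{\psi}_{0,0}$.

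Next I would show that $\curl\A_\zm^0[\w{\phi}_{0,0}]=\gg u^e$ for some harmonic $u^e$ satisfying \eqref{sys:aux:ele}. Using the basic identities $\p_{x_i}G_\zm^0=-\p_{y_i}G_\zm^0$ ($i=1,2$) and $\p_{x_3}G_\ze^0=-\p_{y_3}G_\zm^0$, followed by integration by parts on $\p B$, one finds $\divs\A_\zm^0[\w{\phi}_{0,0}]=\S_\zm^0[\divs\w{\phi}_{0,0}]=0$, so that $\curl\curl\A_\zm^0[\w{\phi}_{0,0}]=-\Delta\A_\zm^0[\w{\phi}_{0,0}]=0$ off $\p B$; combined with $\divs\curl=0$, this gives the gradient representation with $u^e$ harmonic in $\Omega\setminus\p B$. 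The continuity of $\n\cdot\gg u^e$ across $\p B$ follows from \eqref{relation:trace:curl} since $\A_\zm^0[\w{\phi}_{0,0}]$ is continuous there, while the jump formula \eqref{formu:label:vA} combined with the explicit form of $\w{\phi}_{0,0}$ produces
\[
\muc(\n\t\gg u^e)|_{-}-(\n\t\gg u^e)|_{+}=\tfrac{\muc+1}{2}\w{\phi}_{0,0}+(\muc-1)\M_\zm[\w{\phi}_{0,0}]=(1-\muc)(\lm-\M_\zm)[\w{\phi}_{0,0}]=\n\t\ei(0).
\]
The boundary behavior on $\Gamma$ is inherited from the structure of $\Pi_\zm^0$ (two Neumann $G_\zm^0$ slots and one Dirichlet $G_\ze^0$ slot), which forces the tangential components of $\curl\A_\zm^0[\w{\phi}_{0,0}]$ to vanish on $\Gamma$, so $\gs u^e=0$ there and $u^e$ can be normalized to $0$ on $\Sigma$. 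Finally, divergence-freeness gives $\int_{\p B}\w{\phi}_{0,0}\,d\sigma=-\int_{\p B}\w{\y}\divs\w{\phi}_{0,0}\,d\sigma=0$, which cancels the linearly growing $|x_3|/\tau$ contribution of $G_\zm^0$ at infinity and yields exponential decay of $u^e-u_\infty$.

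To extract the explicit formula \eqref{lemma:aux:elec}, I would use the ansatz $u^e=\alpha_{+}\S_\ze^0[\sigma]$ in $\Omega\setminus\bar{B}$ and $u^e=\muc^{-1}\ei(0)\cdot\x+\alpha_{-}\S_\ze^0[\sigma]$ in $B$. Since $\n\t\gg\S_\ze^0[\sigma]$ is continuous across $\p B$, the tangential transmission condition collapses to $\muc\alpha_{-}=\alpha_{+}$, and substituting this and \eqref{for:trace:S} into the normal continuity condition yields $(1-\muc)\alpha_{-}(\lm-\K_\ze^{0,*})[\sigma]=\muc^{-1}\n\cdot\ei(0)$; the normalization $\alpha_{-}=[\muc(1-\muc)]^{-1}$ then forces $\sigma=(\lm-\K_\ze^{0,*})^{-1}[\n\cdot\ei(0)]$ and produces \eqref{lemma:aux:elec}. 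The identity \eqref{lemma:aux:mag} is obtained by interchanging $\ze\leftrightarrow\zm$ and $\n\t\ei(0)\leftrightarrow ik^{-1}\n\t\hi(0)$; the condition on $\Sigma$ becomes $\p v/\p x_3=0$ because $\curl\A_\ze^0[\w{\psi}_{0,0}]$ is now tangential on $\Gamma$. The principal technical obstacle throughout is the careful tracking of the large-$x_3$ behavior: the zeroth-order periodic Green's functions $G_\emm^0$ carry a linearly growing term which would break the entire approximation scheme, and it is precisely the divergence-freeness of $\w{\phi}_{0,0}$ and $\w{\psi}_{0,0}$ that produces the integrability conditions cancelling this growth.
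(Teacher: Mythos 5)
Your strategy is essentially the one the paper follows: show $\curl\A_\zm^0[\w{\phi}_{0,0}]$ is the gradient of a harmonic solution of the transmission system \eqref{sys:aux:ele}, produce a second solution in closed form using a single-layer potential $\S_\ze^0$, and conclude by matching gradients. The only real stylistic difference is that you \emph{derive} the closed form by solving a linear system for the unknown density $\sigma$ and the constants $\alpha_\pm$, whereas the paper simply \emph{verifies} the displayed formula satisfies \eqref{sys:aux:ele}; this is the same computation read in two directions, and your jump-relation manipulations are correct. Your observations on the divergence-freeness of $\w{\phi}_{0,0},\w{\psi}_{0,0}$, the cancellation of the linearly growing mode of $G_\zm^0$, and the $\Gamma$-boundary conditions inherited from the kernel structure of $\Pi_{\emm}^0$ all match the paper.

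However, your proposal leaves two steps implicit that the paper treats explicitly and that are genuinely needed to close the argument. First, equating the ansatz-derived $u^e$ with the potential $p$ satisfying $\gg p = \curl\A_\zm^0[\w{\phi}_{0,0}]$ requires knowing that the gradient of a solution of \eqref{sys:aux:ele} is unique; the paper proves this via the energy identity $\int_\Omega \mu |\gg u^e|^2 = 0$ and a real/imaginary-part argument exploiting $\im\muc>0$. Without this, you have two candidate solutions of the transmission system and no reason their gradients coincide. Second, \eqref{sys:aux:ele} requires the scalar potential to be periodic with respect to $\Lambda$, and while $\curl\A_\zm^0[\w{\phi}_{0,0}]$ itself is obviously periodic, the potential $p$ is a priori defined only up to additive constants on each component; showing one can choose $p$ periodic (and not merely quasi-periodic with a nontrivial linear part) is a genuine step, which the paper handles via the translation operators $\T_i$ and a matching of far-field Fourier modes. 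Adding these two steps would make your proposal a complete proof, and in the same spirit as the paper's.
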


\begin{proof}
We consider only the system \eqref{sys:aux:ele} and show \eqref{lemma:aux:elec}, since the proof of (\ref{lemma:aux:mag}) is similar. We first prove that the right-hand side of \eqref{lemma:aux:elec} and $\curl \A_\zm^0[\w{\phi}_{0,0}]$
are both the gradients of  some solutions to \eqref{sys:aux:ele},
then demonstrate that all the solutions to \eqref{sys:aux:ele} have an identical gradient.
Recalling the far-field behavior \eqref{far-field:single}-\eqref{far-field:single2} of $\S^0_{\ze}[\phi]$,
we can verify that the function
\begin{align*}
u(\w{\x}) :=
\begin{cases}
\frac{1}{1-\muc} \S_\ze^0 (\lm - \K_\ze^{0,*})^{-1}[\n \dd \ei(0)])(\w{\x}) \q &\text{in} \q \Omega \backslash B, \\
\frac{1}{\muc}\ei(0)\w{\x} + \frac{1}{\muc(1-\muc)} \S_\ze^0 (\lm - \K_\ze^{0,*})^{-1}[\n \dd \ei(0)])(\w{\x}) \q &\text{in} \q B ,
\end{cases}
\end{align*}
satisfies both the boundary and far-field conditions in (\ref{sys:aux:ele}), and is actually a solution to (\ref{sys:aux:ele}).
Furthermore, we can check that the right-hand side of (\ref{lemma:aux:elec}) is the gradient of this solution $u$.

Next, we show that $\curl \A_\zm^0 [\w{\phi}_{0,0}]$ can also be written as the gradient of a solution to (\ref{sys:aux:ele}).
In fact, by the continuity of its normal trace and the jump relation of its tangential trace,
we find that
\begin{equation*}
    [\n \dd \curl \A^0_\zm[\w{\phi}_{0,0}]] = 0\,, \q [\mu(\n \t \curl \A^0_\zm[\w{\phi}_{0,0}])] = \n \t \ei(0).
\end{equation*}
But noticing that $\divs \w{\phi}_{0,0} = 0$, we get
\begin{equation*}
    \curl \curl \A_\zm^0 [\w{\phi}_{0,0}] =  \gg \S_\zm^0[\divs \w{\phi}_{0,0}] = 0 \q \text{in} \q \Omega \backslash \p B,
\end{equation*}
which implies (cf. \cite[Theorem 3.37]{monk2003finite})
\begin{equation} \label{appen:poincare}
    \curl \A_\zm^0 [\w{\phi}_{0,0}] = \gg p \q \text{for some} \  p \in H^1(\B) \ \text{or} \ H^1_{loc}(\R^3_+ \backslash \bar{\B}).
\end{equation}
Moreover, we can assume $p = 0$ on $\Gamma$ by noting the fact that $\e_3 \t \curl \A^0_{\zm}[\w{\phi}_{0,0}] = \e_3 \t \gg p = 0$ on $\Gamma$.
To see that $p$ is indeed a solution to \eqref{sys:aux:ele}, it remains to show that $p$, up to a constant, satisfies
\begin{enumerate}[\textbullet]
    \item[(i)] $p$ is periodic with respect to $\Lambda$.
    \item[(ii)] there exists a complex constant $c_p$ such that $p - c_p$ decays exponentially as $x_3 \to \infty$.
\end{enumerate}
For these two claims, we consider the translation operator $\T_i: L_{loc}^2(\R^3_+) \longmapsto L_{loc}^2(\R^3_+)$ defined by
\begin{equation*}
    \T_i u (x',x_3) = u (x'+\a_i,x_3).
\end{equation*}
Note that $\T_i$ commutes with the gradient operator, namely $\T_i \gg = \gg T_i$ in the distribution sense. Since $\T_i \gg p = \gg p$ by (\ref{appen:poincare}), we have $\gg (\T_i p - p) = 0$, which implies that there exist two constants $C_1$ and $C_2$
such that $\T_i p = p + C_i$ in $\Omega \backslash \bar{B}$. We now choose vectors $\b_i$ such that $\a_i \dd \b_j = \d_{ij}$,
and define an auxiliary function $\w{p} = p - (C_1\b_1 + C_2 \b_2)\dd x'$.
Then we can directly check that $\w{p}$ is periodic with respect to $\Lambda$, i.e.,
\begin{align*}
    \w{p}(x'+\a_i,x_3) =  p(x'+\a_i,x_3) - (C_1\b_1 + C_2 \b_2)\dd (x'+ \a_i) = \w{p}(x',x_3).
\end{align*}
In the case of far-fields, noting that $\Delta \w{p} = 0$, we can expand $\w{p}$ by Fourier series  (cf.\cite{cessenat1996mathematical}),
\begin{equation} \label{far:auxwp}
    \w{p} = \sum_{\xi \in \Lambda^*} p_\xi e^{i\xi \dd x'-|\xi|x_3}, \q p_\xi \in \C.
\end{equation}
It is easy to see from (\ref{far-field:single})-(\ref{far-field:single2})
that $\curl \A^0_{\zm}[\w{\phi}_{0,0}]$ decays exponentially as $x_3 \to \infty$. Recalling \eqref{appen:poincare}-\eqref{far:auxwp} and the definition of $\w{p}$, we can show that  $C_1 = 0$ and $C_2 = 0$,
by matching the far-field modes of $\gg p$ and $\curl \A_\zm^0[\w{\phi}_{0,0}]$.
Hence we can conclude $\w{p} = p$, and our two claims follow.

Finally, we prove $\gg u$ and $\gg p$ defined above can be uniquely determined by the system (\ref{sys:aux:ele}). For doing so, it suffices to show that the gradient of any solution $u^e$ to \eqref{sys:aux:ele} is zero in $\Omega \backslash \p B$
if we replace the jump data $\n \t \ei(0)$ by $0$.
Noting that the jump condition $\muc (\n \t \gg u^e)|_- = (\n \t \gg u^e)|_+$, together with the formula (\ref{relation:trace:l1}), implies that
$$\gs [(\mu u^e)|_- -  (\mu u^e)|_+] = 0,$$
therefore we know $(\mu u^e)|_- = (\mu u^e)|_+ + C$ for some constant $C$.
Without loss of generality, we assume $C=0$, otherwise we may consider $u^e - \frac{C}{\muc} \mathcal{X}_B$.
By integration by parts and interface conditions, we get
\begin{equation} \label{4.33}
     \int_{\Omega} \mu |\gg u^e|^2(\w{\x}) d\w{\x} = 0\,,
\end{equation}
then taking the imaginary and real parts, we deduce
$\gg u^e = 0$ in $B$, and $\gg u^e = 0$ in $\Omega \backslash \bar{B}$, respectively.
\end{proof}
We can see from Lemma\,\ref{lem:interpret:zeroorder} and the formulas (\ref{form:surdiv:phi})-(\ref{form:surdiv:psi})
that
\begin{equation} \label{for:diver:firstorder}
\divs \w{\phi}_{0,1} = (\lm + \K_\zm^{0,*})^{-1} ( \frac{k_c^2-k^2}{1-\muc} \pn{u^h})\,, \q
\divs \w{\psi}_{0,1} =  (\lee +
\K_\ze^{0,*})^{-1}(\frac{k_c^2-k^2}{k^2(1-\eec)} \pn{u^e}).
\end{equation}
In order to calculate $\w{E}^r_{p}$ in \eqref{appro:reflection}, we still need to find  quantities like
$\int_{\p B}\w{\phi}_{j,0}d\sigma$, $\int_{\p B}\w{\phi}_{0,j}d\sigma$ and $\int_{\p B}\w{y}_j\w{\phi}_{0,0}d\sigma$,
and the corresponding quantities for $\w{\psi}$, which are given in the following lemma.

\begin{lemma}\label{lem:tensor:firstorder}
The following identities hold,
\begin{align}
&\int_{\p B} \w{y}_j \w{\phi}_{0,0}(\w{\y}) d\sigma
 = |B|\e_j \t \ei(0) + (1 - \muc) \e_j \t \int_{\p B} \w{\y} \pn{u^e}(\w{\y}) d\sigma\,, \label{for:tensor:1}\\
&\int_{\p B} \w{y}_j \w{\psi}_{0,0}(\w{\y}) d\sigma =   \frac{i}{k}|B|\e_j \t \hi(0) + (1-\eec) \e_j \times \int_{ \p B} \w{\y} \pn{u^h}(\w{\y}) d\sigma\,,\label{for:tensor:2}\\
&\int_{\p B} \w{\phi}_{j,0}(\w{\y}) d\sigma
= \e_j \times \p^j \ei(0) |B|
 + (1 - \muc)\int_B \gg \S^0_\zm[\divs \w{\phi}_{j,0}](\w{\y})d\w{\y}\,, \label{for:tensor:3}\\
&\int_{\p B} \w{\psi}_{j,0}(\w{\y}) d\sigma = \frac{i}{k}\e_j \t \p^j \hi(0) |B| + (1 -\eec) \int_B \gg \S^0_\ze[\divs \w{\psi}_{j,0}](\w{\y}) d\w{\y}\,.\label{for:tensor:4}
\end{align}
\end{lemma}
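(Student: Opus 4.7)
My plan is to rewrite the defining equations for the zeroth-order densities $\w\phi_{\beta,0}, \w\psi_{\beta,0}$ using the $\M$-jump formulas so that the right-hand sides expose an explicit $\n \times$ structure; once this is done, integration over $\p B$ (possibly weighted by $\w y_j$) reduces to volume integrals via the divergence theorem, and harmonicity finishes the job. Concretely, using \eqref{formu:label:vA} in the one-sided form $\M_\zm[\w\phi] = (\n \times \curl \A^0_\zm[\w\phi])|_- - \frac{1}{2}\w\phi$, the operator on the left of the first row of $\W_{0,B}$ collapses to $\w\phi + (\muc-1)(\n \times \curl \A^0_\zm[\w\phi])|_-$, and the defining system therefore rearranges to
\begin{equation*}
\w\phi_{\beta,0} = \n \times \w y^\beta \p^\beta \ei(0) + (1-\muc)\,\n \times \curl \A^0_\zm[\w\phi_{\beta,0}]\big|_{-} - (k_c-k)\,\L_{1,\ze}[\w\psi_{\beta,0}],
\end{equation*}
with the analogous identity for $\w\psi_{\beta,0}$.

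For \eqref{for:tensor:1}--\eqref{for:tensor:2} I would multiply this identity by $\w y_j$ and integrate over $\p B$ with $\beta = 0$. The first term gives $|B|\,\e_j \times \ei(0)$ via $\int_{\p B}\w y_j \n_i d\sigma = |B|\delta_{ij}$; the $\L_{1,\ze}$-term vanishes outright, because $\w\psi_{0,0}$ is divergence-free (noted after \eqref{form:potenzero:2}) and Lemma \ref{thm:asy:L} exhibits $\L_{1,\emm}$ as a functional of $\divs$ alone. For the middle term I would invoke Lemma \ref{lem:interpret:zeroorder} to identify $\curl \A^0_\zm[\w\phi_{0,0}]|_- = \gg u^e|_-$, then apply the divergence theorem component-wise in index notation to convert the surface integral into $\e_j \times \int_B \gg u^e\, d\w y$, with the cross-derivative term dropping because $\Delta u^e = 0$. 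A further harmonic trick $\p_l u^e = \p_m(\w y_l \p_m u^e)$ yields the boundary integral $\int_{\p B}\w y\, \pn{u^e}\, d\sigma$ that appears in the statement; the proof of \eqref{for:tensor:2} is verbatim with $u^h$ in place of $u^e$.

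For \eqref{for:tensor:3}--\eqref{for:tensor:4} the plan is parallel but without the harmonic interpretation, since $\w\phi_{j,0}$ need not be divergence-free. I would integrate the identity directly (unweighted) with $\beta = \e_j$. The incident term contributes $|B|\,\e_j \times \p^j \ei(0)$ by the Stokes-type identity $\int_{\p B}\n \times \vec v\, d\sigma = \int_B \curl \vec v\, d\w y$; the $\L_{1,\ze}$-term again vanishes, but now because Lemma \ref{thm:asy:L} exhibits it as $\n \times (\vec c + \gg F)$ whose $\n \times$ integrates to $\int_B \curl(\vec c + \gg F)\, d\w y = 0$. The surviving piece then reduces to $(1-\muc)\int_B \gg \S^0_\zm[\divs \w\phi_{j,0}]\, d\w y$ via the in-$B$ identity $\curl \curl \A^0_\zm[\phi] = \gg \S^0_\zm[\divs \phi]$ (a consequence of $\divs \A^0_\zm[\phi] = \S^0_\zm[\divs \phi]$ through the reciprocity relations, together with $\Delta \A^0_\zm[\phi] = 0$ away from $\p B$). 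The main subtlety I anticipate is choosing the correct one-sided trace in the jump formula so that the prefactor $(1-\muc)$ drops out in front of the gradient term rather than $\muc$, together with verifying cleanly that the $\L_{1,\emm}$ contributions disappear for two different reasons in the two groups of identities — by divergence-freeness of the zeroth-order densities when $\beta = 0$, and by the $\n \times \text{(constant + gradient)}$ structure of $\L_{1,\emm}$ itself when $|\beta| = 1$.
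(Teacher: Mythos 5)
Your proposal is correct and follows essentially the same route as the paper. The paper's key step is the algebraic decomposition $f = (1-\eec)\bigl(\lee - \M_\ze + \tfrac12 + \M_\ze\bigr)[f]$ (using $(1-\eec)(\lee+\tfrac12)=1$), which, applied to $\w\psi_{\beta,0}$ and combined with the jump formula $(\tfrac12 + \M_\ze)[\w\psi_{\beta,0}] = (\n\times\curl\A_\ze[\w\psi_{\beta,0}])|_-$ and the defining equation for $\lee - \M_\ze$, is precisely the rearranged identity you wrote down; your rewriting of the diagonal block of $\W_{0,B}$ via the one-sided trace is the same algebra seen from the operator side. The rest of your plan — Stokes to convert $\int_{\p B}\w y_j\,\n\times(\cdots)\,d\sigma$ into $\int_B\curl(\w y_j\,\cdots)\,d\w\y$, the identification $\curl\A_\zm^0[\w\phi_{0,0}]=\gg u^e$ from Lemma \ref{lem:interpret:zeroorder}, the harmonicity trick $\int_B\gg u^e\,d\w\y = \int_{\p B}\w\y\,\pn{u^e}\,d\sigma$, the vanishing of the $\L_{1,\emm}$ contributions for the two distinct reasons you give (divergence-freeness of the zeroth-order densities for $\beta=0$; the $\n\times(\text{const}+\text{gradient})$ structure integrating to zero for $|\beta|=1$), and the in-$B$ identity $\curl\curl\A_\zm[\phi]=\gg\S^0_\zm[\divs\phi]$ — matches the paper step for step. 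One small notational care worth taking: the leading-order potential appearing in the jump formula is $\A_\zm$ built from $\Pi_{0,\zm}$, not $\A^0_\zm$ built from $\Pi^0_\zm$; these differ by the smooth corrector $-\tfrac{\k'\cdot(\x'-\y')}{2k_3\tau}$ in \eqref{rela:leading-orderm}, which contributes only a constant to $\divs\A_\zm[\phi]$ and hence does not affect $\curl\curl$, so your conclusion stands, but the distinction should be stated explicitly.
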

\begin{proof}
We demonstrate only how to compute the quantities involving $\w{\psi}$, as the same can be done
for the terms related to $\w{\phi}$.
To do so, we first consider $\int_{\p B} \w{\y}^\alpha \w{\psi}_{\beta,0}(\w{\y}) d\sigma$ for $|\alpha|\le 1$.
Using  formula (\ref{form:potenzero:2}) and the decomposition for any proper $f$,
\begin{align*}
f = (1 - \eec)(\lambda_\varepsilon - \M_\ze + \frac{1}{2} + \M_\ze)[f]\,,
\end{align*}
we can compute
\begin{align*}
\int_{\p B} \w{\y}^\alpha \w{\psi}_{\beta,0}(\w{\y}) d\sigma = & \int_{\p B}\w{\y}^\alpha (1 - \eec)(\lambda_\varepsilon - \M_\ze + \frac{1}{2} + \M_\ze)[\w{\psi}_{\beta,0}](\w{\y}) d\sigma \\
 = &\int_{\p B}\w{\y}^\alpha (1 - \eec)(\frac{i\n(\w{\y})\times \w{\y}^\beta \p^\beta \hi(0) }{k(1-\varepsilon_c)} + g_\beta) d\sigma \\
& + \int_{\p B} \w{\y}^\alpha (1-\eec) \n \t \curl \A_\ze[\w{\psi}_{\beta,0}] d\sigma \\
= &\frac{i}{k}\int_{B} \curl (\w{\y}^\alpha \w{\y}^{\beta} \p^\beta \hi(0))d\w{\y} + (1-\eec)\int_{\p B}\w{\y}^\alpha g_\beta d\sigma \\
& +  (1 - \eec)\int_{B} \curl (\w{\y}^\alpha \curl \A_\ze[\w{\psi}_{\beta,0}])d\w{\y}.
\end{align*}
In particular, we get for $|\alpha| = 1\,, |\beta| = 0$ that
\begin{align*}
    \int_{\p B} \w{y}_j \w{\psi}_{0,0} d\sigma = \frac{i}{k}\e_j \t \hi(0)|B| + (1-\eec)\int_{B}  \curl (\w{y}_j \curl \A_e[\w{\psi}_{0,0}])
    d\w{\y},
\end{align*}
where we have used the Stokes's theorem and the fact that $g_\beta= 0$ for $\beta = 0$.
Then formula (\ref{for:tensor:2}) follows directly from the relation
\begin{align*}
    &\curl (\w{y}_j \curl \A_\ze[\w{\psi}_{\beta,0}])
 =   \e_j \t \curl \A_\ze[\w{\psi}_{0,0}] + \gg \S_\ze^0[\divs \w{\psi}_{j,0}],
\end{align*}
and by writing
\begin{align*}
    &\int_{B} \curl \A_\ze[\w{\psi}_{0,0}] d\w{\y} = \int_{\p B} (\n \t \A_\ze)|_-[\w{\psi}_{0,0}]d\sigma \\
= & \int_{\p B}(\frac{1}{2}+\M_\ze)[\w{\psi}_{0,0}]d\sigma = - \int_{\p B}\w{\y} \divs(\frac{1}{2}+\M_\ze)[\w{\psi}_{0,0}]d\sigma \\
= & - \int_{\p B} \w{\y} (\frac{1}{2} - \K^*_\ze)[\divs \w{\psi}_{0,0}]d\sigma  =  0\,,
\end{align*}
where we have used \eqref{rela:useinte} again.
For $|\alpha|= 0,|\beta| = 1$, we note that
$$\int_{\p B} g_\beta d\sigma = 0 \q for \q |\beta| = 1, $$
then a similar derivation leads to (\ref{for:tensor:4}).
\end{proof}

\textbf{Computation of the scattered wave.} We are now well prepared to compute each term
in (\ref{appro:reflection}). Recalling our conventional writing $  \R^3  \ni  \di= (d', d_3)$
for a vector $\di$, we identify $d'$ with $(d',0)$ below to simplify our notation. We start with a direct application of Lemma \ref{lem:tensor:firstorder} to get
\begin{align} \label{aux:firstorder:1}
     \int_{\p B}\frac{d'\dd \w{y}' }{\tau d_3}\w{\phi}_{0,0}(\w{\y})d\sigma = \frac{d' \t \ei(0)|B|}{\tau d_3} + \frac{d'}{\tau d_3} \t \int_{\p B}(1-\muc) \w{\y} \pn{u^e}(\w{\y}) d\sigma,
\end{align}
and
\begin{align}
        -\frac{1}{\tau}\int_{\p B} \w{y_3} \w{\psi}_{0,0}(\w{\y})d\sigma &= -\frac{i}{k \tau} |B| \e_3 \t \hi(0) + \frac{\eec -1}{\tau} \e_3 \t \int_{\p B}\w{\y}\pn{u^h}(\w{\y}) d\sigma , \label{aux:firstorder:2} \\
          \int_{\p B} \frac{d' \dd \w{y}'}{\tau d_3}\w{\psi}_{0,0}(\w{\y}) d\sigma &= \frac{i}{\tau k_3}|B| d' \t \hi(0) + \frac{1-\eec}{\tau d_3} d' \t \int_{\p B} \w{\y} \pn{u^h}(\w{\y})d\sigma .\label{aux:firstorder:3}
\end{align}
We remark that it is unnecessary for us to consider $\int_{\p B}\frac{\w{y_3}}{\tau} \w{\phi}_{0,0}(\w{\y})d\sigma$
since only the third component is needed in (\ref{appro:reflection}), which is known to be zero.
It is easy to check that
\begin{equation}
    \sum_{j = 1}^3 \e_j \t \p^j \ei(0)  = ik \hi(0) \q \text{and} \q \sum^3_{j =1} \e_j \t \p^j\hi(0) = -ik \ei(0), \label{aux:constant}
\end{equation}
then we can derive
\begin{align}
    \sum^3_{j =1} \divs \w{\phi}_{j,0} &= \sum^3_{j =1} (\lm + \K_\zm^{0,*})^{-1 }(\frac{\divs \n(\w{\x}) \t \w{x}_j\p^j \ei(0) }{1-\muc}) \notag \\
    & = \sum^3_{j =1} (\lm + \K_\zm^{0,*})^{-1 }(\frac{\n(\w{\x}) \dd (\e_j \t \p^j \ei(0))}{\muc -1}) \notag \\
    & = \frac{i k}{\muc - 1}(\lm + \K^{0,*}_\zm)^{-1}[\n \dd \hi(0)]. \label{aux:sum:divsphi}
\end{align}
Using (\ref{aux:constant}) and (\ref{aux:sum:divsphi}),  we can obtain the summation of (\ref{for:tensor:3}) over $j$:
\begin{align}
     \frac{i}{\tau  k_3}\int_{\p B}\sum_{j =1}^3 \w{\phi}_{j,0}d\sigma & = - \frac{\hi(0)|B|}{\tau d_3} + \frac{1}{\tau d_3} \int_{B} \gg \S^{0,*}_{\zm}(\lm +  \K^{0,*}_\zm)^{-1}[\n \dd \hi(0)] \notag \\
     & = - \frac{\hi(0)|B|}{\tau d_3} +   \frac{1}{\tau d_3} \int_{\p B}\w{\y} (-\frac{1}{2} + \K_\zm^{0,*})(\lm + \K_\zm^{0,*})^{-1}[\n \dd \hi(0)] d\sigma  \notag \\
    & = - \frac{\hi(0)|B|}{\tau d_3} +  \frac{1}{\tau d_3(\muc -1)} \int_{\p B} \w{\y} (\lm + \K_\zm^{0,*})^{-1}[\n \dd  \hi(0)] d\sigma. \label{aux:firstorder:4}
\end{align}
A similar calculation gives
\begin{align}
   \frac{i}{\tau k_3}\int_{\p B} \w{\psi}_{j,0} d\sigma  = & \frac{i}{\tau k_3}\ei(0)|B| -  \frac{i}{\tau k_3}\int_{B}\gg \S_\ze^0(\lee + \K_\ze^{0,*})^{-1}[\n \dd \ei(0)]d\sigma \notag \\
    = & \frac{i}{\tau k_3}\ei(0)|B| +  \frac{i}{\tau k_3(1-\eec)}\int_{B} \w{\y} (\lee + \K_\ze^{0,*})^{-1}[\n \dd \ei(0)]d\sigma, \label{aux:firstorder:5}
\end{align}
by using (\ref{aux:constant}) and the fact that
\begin{equation}
    \sum^3_{j = 1} \divs \w{\psi}_{j,0} = \frac{1}{\eec - 1}(\lee + \K_\ze^{0,*})^{-1}[\n \dd \ei(0)].  \label{aux:sum:divspsi}
\end{equation}
Moreover, recalling (\ref{for:diver:firstorder}), we get
\begin{align}
    & \frac{i}{\tau  k_3}\int_{\p B} \w{\phi}_{0,1} d\sigma = \frac{i}{\tau k_3}\int_{\p B}
    \w{\y}(\lm + \K_\zm^{0,*})^{-1}(\frac{k^2 -  k_c^2}{1-\muc} \pn{u^h})(\w{\y})d\sigma \label{aux:zeroorder:1}, \\
    & \frac{i}{\tau k_3} \int_{\p B} \w{\psi}_{0,1} d\sigma = - \frac{i}{\tau k_3} \int_{\p B} \w{\y} \frac{k_c^2 - k^2}{k^2(1-\eec)} (\lee + \K^{0,*}_\ze)^{-1}(\pn{u^e})(\w{\y}) d\sigma. \label{aux:zeroorder:2}
\end{align}
We have now computed all the terms involved in (\ref{appro:reflection}).
It is worth mentioning that we shall only need the first two components of (\ref{aux:firstorder:1})-(\ref{aux:firstorder:2}),(\ref{aux:firstorder:4}) and (\ref{aux:zeroorder:1}), as well as the third component of (\ref{aux:firstorder:3}), (\ref{aux:firstorder:5}) and (\ref{aux:zeroorder:2}), to compute the approximate scattered wave.
Before we apply all the expressions to (\ref{appro:reflection}), we make some further observations
to simplify our  representation.
To proceed, we first consider the non-integral terms in (\ref{aux:firstorder:1}), (\ref{aux:firstorder:2}) and  (\ref{aux:firstorder:4})
to find that
\begin{align*}
    & \curl \G_r^{\d \k} (\frac{-H^i(0)|B|}{\tau d_3} + \frac{|B|d' \t \ei(0)}{\tau d_3}) + \d k^2 \G_r^{\d \k} (-\frac{i}{k \tau}|B|\e_3 \t H^i(0)) \\
 = & \curl \G_r^{} (-\frac{2|B|}{\tau}\e_3 \t \po) + \d k^2 \G_r^{\d \k} (-\frac{i}{k \tau}|B|\e_3 \t H^i(0)) \\
 = & -i \d k \frac{2|B|}{\tau} \G_r^{\d \k} (-d_3 \po^*) + \d k^2 \G_r^{\d \k} (-\frac{i}{k \tau}|B|\e_3 \t H^i(0)) \\
 = & -i \d k \frac{2|B|}{\tau} \G_r^{\d \k}  (-p_3 \di^*) = 0,
\end{align*}
where we have used the simple identity
$ 
    -\hi(0) + d' \t \ei(0) = 2d_3(p_2,-p_1,0).
$ 
In addition, we note that $d'\t \hi(0) + \ei(0) = 0$. Therefore,  the non-integral terms in (\ref{aux:firstorder:3}) and (\ref{aux:firstorder:5}) can be cancelled. Moreover, for any vector $\a \in \R^3$, we can check from \eqref{eq:proker} that
\begin{align} \label{aux:rela:1}
& \curl \G_r^{\d \k} \a  = i \d k \G_r^{\d \k} \di^* \t \a,  \q
\curl \G_r^{\d \k} \di^* \t \a = - i \d k \G_r^{\d \k} \a, 
\end{align}
and the vector identities
\begin{align}\label{aux:rela:3}
    &\di^* \t a' = - d_3 \e_3 \t \a + (d' \t  \a)_3 \e_3, \q
    (d' \t \a)' = \di^* \t (a_3 \e_3). 
\end{align}
Now recalling (\ref{aux:firstorder:2}) and (\ref{aux:firstorder:3}), and using  (\ref{aux:rela:1}) and (\ref{aux:rela:3}), we can deduce that
\begin{align*}
    &\d k^2 \G_r^{\d \k} \frac{1-\eec}{\tau d_3} ((d' \t \int_{\p B}\w{\y} \pn{u^h} d\sigma)_3 \e_3 - d_3\e_3 \t  \int_{\p B}\w{\y} \pn{u^h} d\sigma) \\
    = & \d k^2 \G^{\d \k}_r \frac{1-\eec}{\tau d_3} \di^* \t (\int_{\p B}\w{\y}\pn{u^h} d\sigma)'\\
    = & \frac{ik(\eec -1)}{\tau d_3} \curl \G_r^{\d \k} (\int_{\p B} \w{\y} \pn{u^h} d\sigma)'.
\end{align*}
Similarly, we can derive by means of (\ref{aux:rela:1}) and (\ref{aux:rela:3}) that
\begin{align*}
    \curl \G_r^{\d \k} \frac{1-\muc}{\tau d_3} \di^* \t (\int_{\p B} \w{\y} \pn{u^e} d\sigma)_3 \e_3 = \d k^2 \G_r^{\d \k} \frac{i}{\tau k_3} (\int_{\p B}\w{\y}(\muc - 1)\pn{u^e} d\sigma)_3 \e_3.
\end{align*}
Combining these observations above and substituting the expressions to (\ref{appro:reflection}),
we obtain the approximate scattered wave $\w{E}^r_p$ by adding up the electric dipole and magnetic dipole:
\begin{align} \label{for:reflection:final}
\w{E}^r_{p}(\w{\x}) = & \curl \G_r^{\d \k}(\w{\x}) {\bf J}'_{\zm}
+ \d k^2 \G^{\d \k}_r(\w{\x}) ({\bf J}_\ze)_3\e_3 + O(\d^2),
\end{align}
where ${\bf J}_\zm$ and ${\bf J}_\ze$ are defined by
\begin{align}
    {\bf J}_\zm:= & \frac{i}{\tau k_3} \int_{\p B} \w{\y} k^2 (\eec-1) \pn{u^h} d\sigma
    -\frac{i}{\tau k_3}\int_{\p B} \w{\y} (\lm + \K^{0,*}_\zm)^{-1}\frac{k_c^2 - k^2}{1 -\muc}\pn{u^h} d\sigma \nb\\
    & + \frac{1}{\tau d_3(\muc -1)} \int_{\p B} \w{\y} (\lm + \K_\zm^{0,*})^{-1}[\n \dd  \hi(0)]d\sigma  \label{eq:Jm}\\
     {\bf J}_\ze:= & \frac{i}{\tau k_3}\int_{\p B} (\muc -1) \w{\y} \pn{u^e} d\sigma - \frac{i}{\tau k_3} \int_{\p B}\frac{k_c^2 - k^2}{k^2(1-\eec)} \w{\y} (\lee +  \K^{0,*}_\ze)^{-1}\pn{u^e}d\sigma \nb\\
     & + \frac{i}{\tau k_3(1-\eec)} \int_{\p B} \w{\y} (\lee + \K^{0,*}_\ze)^{-1} [\n \dd \ei(0)]d\sigma. \label{eq:Je}
\end{align}

Next, we compute these two dipoles  ${\bf J}_\zm$ and  ${\bf J}_\ze$, respectively.
For ${\bf J}_\zm$, noting the relation
\begin{align}
     k^2(\eec - 1)(\lm + \K_\zm^{0,*}) + \frac{k^2 - k_c^2}{1 - \muc}
    &=  k^2(\eec - 1) ( \lm + \K_\zm^{0,*} + \frac{1 - \eec \muc}{(\eec -1 )(1-\muc)}) \notag \\
    &=  k^2(\eec - 1)(-\lee + \K_\zm^{0,*}), \label{aux:jm:coff}
\end{align}
we add the first two terms in ${\bf J}_\zm$ to obtain
\begin{align} \label{terms:mm:1}
    \frac{i k^2}{\tau k_3} \int_{\p B}(\eec - 1)\w{\y} (-\lee +  \K_\zm^{0,*})(\lm + \K_\zm^{0,*})^{-1} \pn{u^h} d\sigma.
\end{align}
But by applying Lemma \ref{lem:interpret:zeroorder} and the jump relation of the Neumann-Poincar\'{e} operator,
we get
\begin{align}
    &\pn{u^h} = \frac{i}{k\eec} \n \dd \hi(0)  + \frac{i}{k \eec (1-\eec)} (-\frac{1}{2} + \lee - \lee +  \K_\zm^{0,*}) (\lee - \K_m^{0,*})^{-1}[\n \dd \hi(0)]  \notag \\
    = & \frac{i}{k(\eec - 1)} \n \dd \hi(0) + \frac{i}{k(1-\eec)^2}(\lee - \K_\zm^{0,*})^{-1}[\n \dd \hi(0)].  \label{aux:jm:puh}
\end{align}
Then it follows from (\ref{terms:mm:1}) that
\begin{align}
    & \frac{i k^2}{\tau k_3} \int_{\p B}(\eec - 1) \w{\y} (-\lee +  \K_\zm^{0,*})(\lm + \K_\zm^{0,*})^{-1} \pn{u^h} d\sigma \notag \\
   = & \frac{i k(\eec - 1)}{\tau d_3} \int_{\p B}  \w{\y}  (-\lee +  \K_\zm^{0,*})(\lm + \K_\zm^{0,*})^{-1} \frac{i}{k(\eec - 1)} \n \dd \hi(0)d\sigma  \nb\\
    & +\frac{i k(\eec - 1)}{\tau d_3} \int_{\p B}  \w{\y} (-\lee +  \K_m^{0,*})(\lm +     \K_\zm^{0,*})^{-1} \frac{i}{k(1-\eec)^2}(\lee -\K_\zm^{0,*})^{-1}[\n \dd \hi(0)]d\sigma \nb \\
   = & \frac{1}{\tau d_3} \int_{\p B} \w{\y} (\lee + \lm) (\lm + \K^{0,*}_\zm)^{-1}[\n \dd \hi(0)] + \frac{1}{\tau d_3(\eec -1)} \int_{\p B} \w{\y} (\lm + \K^{0,*}_\zm)^{-1}[\n \dd \hi(0)] d\sigma.
\end{align}
Combining the above results, along with the relation
\begin{equation} \label{aux:rela:basic}
    -\lm - \lee + \frac{1}{1-\eec} + \frac{1}{1-\muc} = 1,
\end{equation}
we arrive at the desired expression
\begin{equation} \label{for:jm}
    {\bf J}_\zm = -\frac{1}{\tau d_3} \int_{\p B} \w{\y} (\lm + \K_\zm^{0,*})^{-1}[\n \dd \hi(0)]d\sigma\,.
\end{equation}

We now compute ${\bf J}_\ze$. Similarly to the results (\ref{aux:jm:coff}) and (\ref{aux:jm:puh}),
we have

\begin{equation}
    (\muc -1)(\lee + \K_\ze^{0,*}) + \frac{k_c^2 - k^2}{k^2(\eec - 1)}
=   (-\lm + \K^{0,*}_
\ze)(\muc - 1),
\end{equation}
and
\begin{align}
    & \pn{u^e} = \frac{1}{\muc}\n \dd \ei((0) + \frac{1}{\muc(1 - \muc)} (-\frac{1}{2}+\K^{0,*}_\ze)(\lm - \K^{0,*}_\ze)^{-1}[\n \dd \ei(0)]  \notag \\
   = &  \frac{1}{\muc - 1} \n \dd \ei(0) + \frac{1}{(1 - \muc)^2}(\lm - \K^{0,*}_\ze)^{-1}[\n \dd \ei(0)].
\end{align}
Applying these two expressions and (\ref{aux:rela:basic}) yields
\begin{align} \label{for:je}
    {\bf J}_\ze = \frac{i}{\tau k_3}\int_{\p B} \w{\y}(\lee + \K_\ze^{0,*})^{-1}[\n \dd \ei(0)]d\sigma.
\end{align}
Now,  by substituting (\ref{for:je}) and (\ref{for:jm}) into (\ref{for:reflection:final}),  and using the relation \eqref{aux:rela:1},
we come to the main result of this section.

\begin{theorem} \label{thm:mainresult}
When ${\d}/({d_\sigma d_\sigma^*})$ is sufficiently small, for $\x$ away from the thin layer $\D$, the scattered electric field $E^r$ has the asymptotic expression pointwisely as $\d \to 0$:
\begin{align} \label{eq:mainresult}
    E^r(\x) = \d k \G^\k_r(\x) (i \di^* \t {\bf J}'_\zm + k ({\bf J}_\ze)_3 \e_3 ) + O(\d^2),
\end{align}
where ${\bf J}_{\ze}$ and ${\bf J}_{\zm}$ are given by \eqref{for:je} and \eqref{for:jm}, respectively.
\end{theorem}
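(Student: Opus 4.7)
My plan is to assemble the theorem from the chain of approximations developed in this subsection, working in the reference variable $\w{\x}$ and then rescaling. First, by Lemma \ref{lem:Erp}, it suffices to approximate $\w{E}^r_p$, since the evanescent contribution is exponentially small in $\delta$ away from the layer. Using Lemma \ref{app:reflect} together with the factored form of the Green's tensor in \eqref{eq:proker}, I would rewrite
$$\w{E}^r_p = \curl \G_r^{\d \k}\,\hat{\A}^{\d \k}_{p,\zm}[\w{\phi}] + \d k^2 \G_r^{\d \k}\,\hat{\A}^{\d \k}_{p,\ze}[\w{\psi}] + O(\d^2),$$
which isolates the dependence on the densities $\w{\phi},\w{\psi}$ in two scalar integrals of $[g_\zm\e_1,g_\zm\e_2,g_\me\e_3]$ against $\w{\phi}$ or $\w{\psi}$.

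Next, I would insert the two-layer expansion of the densities from \eqref{for:expan:density1}--\eqref{for:expan:density2}. Since we work to order $O(\d^2)$, only $\w{\phi}_{0,0},\w{\psi}_{0,0}$ (from the zeroth-order Taylor term of $\ei,\hi$) and $\w{\phi}_{0,1},\w{\phi}_{j,0},\w{\psi}_{0,1},\w{\psi}_{j,0}$ contribute. Substituting into the integrals against the explicit weights $g_\emm(\w{y})= \pm\w{y}_3/\tau$ or $i/(\tau\d k_3)+ d'\cdot\w{y}'/(d_3\tau)$ produces precisely the expression \eqref{appro:reflection}. Each of the integrals $\int_{\p B}\w{y}_j \w{\phi}_{0,0}$, $\int_{\p B}\w{y}_j\w{\psi}_{0,0}$, $\int_{\p B}\w{\phi}_{j,0}$ and $\int_{\p B}\w{\psi}_{j,0}$ is explicitly evaluated in Lemma \ref{lem:tensor:firstorder}, while the first-order corrections $\divs \w{\phi}_{0,1},\divs\w{\psi}_{0,1}$ are given by \eqref{for:diver:firstorder}; for both of these I would appeal to Lemma \ref{lem:interpret:zeroorder} to replace $\curl\A_\ze[\w{\psi}_{0,0}]$ and $\curl\A_\zm[\w{\phi}_{0,0}]$ by explicit gradients $\gg u^e,\gg u^h$ solving the cell-type Neumann/Dirichlet systems \eqref{sys:aux:ele}--\eqref{sys:aux:mag}.

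The algebraic heart of the argument, and the place where I expect the main difficulty, is the simplification of the resulting dipole coefficients ${\bf J}_\zm,{\bf J}_\ze$ in \eqref{eq:Jm}--\eqref{eq:Je} into the compact forms \eqref{for:jm} and \eqref{for:je}. This rests on the operator identities
$$k^2(\eec-1)(\lm+\K_\zm^{0,*}) + \frac{k^2-k_c^2}{1-\muc} = k^2(\eec-1)(-\lee + \K_\zm^{0,*}),$$
its electric analogue, the Neumann-Poincar\'e jump formulas for $\p_\n u^e,\p_\n u^h$ derived from Lemma \ref{lem:interpret:zeroorder}, and the balance relation \eqref{aux:rela:basic}. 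The non-integral contributions must be shown to cancel using the polarization identities $-\hi(0)+d'\times\ei(0) = 2d_3(p_2,-p_1,0)$ and $d'\times\hi(0)+\ei(0)=0$, together with $\G_r^{\d\k}(-p_3\di^*)=0$ which follows from the projection structure of $\I-\di^*\otimes\di^*$.

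Finally, I would convert the reference-space identity \eqref{for:reflection:final} back to physical coordinates. The vector identities \eqref{aux:rela:1} let me rewrite $\curl\G_r^{\d\k}{\bf J}_\zm' = i\d k\,\G_r^{\d\k}(\di^*\times{\bf J}_\zm')$, yielding a common prefactor $\d k$. Since the Taylor expansion of the incident field and the Neumann-series expansion of $\W_{\d,B}^{-1}$ are uniformly valid only when $\d/(d_\sigma d_\sigma^*)$ is small (by Theorem \ref{thm: perp : W0B} and the remark after it), the $O(\d^2)$ remainder is controlled by this quantity as well, giving the stated expansion \eqref{eq:mainresult} pointwise in $\x$ away from $\D$.
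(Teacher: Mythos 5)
Your proposal is correct and follows essentially the same route as the paper: reduce to the propagating field via Lemma \ref{lem:Erp}, pass to the factored propagative kernel via Lemma \ref{app:reflect} and \eqref{eq:proker}, insert the two-index density expansion \eqref{for:expan:density1}--\eqref{for:expan:density2}, evaluate the resulting integrals with Lemmas \ref{lem:interpret:zeroorder} and \ref{lem:tensor:firstorder}, and collapse the dipole coefficients to \eqref{for:jm} and \eqref{for:je} by the operator identity \eqref{aux:jm:coff}, its electric analogue, the balance relation \eqref{aux:rela:basic}, the polarization identities, and the vector identities \eqref{aux:rela:1}--\eqref{aux:rela:3}. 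You have identified all of the paper's key lemmas and the cancellation mechanism for the non-integral terms; the only thing omitted is the bookkeeping detail that the $\e_3\times\hi(0)$ contribution from \eqref{aux:firstorder:2} must be grouped with the other non-integral terms before applying $\G_r^{\d\k}\di^*=0$, but that is a routine computational step rather than a conceptual gap.
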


\begin{remark}
The geometry of the microstructure $D$ of the thin layer can be quite complicated, e.g.,
a domain with a hole or a domain with multiple connected components,
although we often assume it is simple connected with a connected boundary
for simplicity (e.g., Lemma\,\ref{thm:spec:no}). Therefore, our results are very general,
and are still true for the strongly coupled multi-layer case,
i.e., there are multiple layers of close-to-touching nanoparticles.
\end{remark}

From Theorem\,\ref{thm:mainresult}, we can clearly see the anomalous electromagnetic scattering is due to the occurrence of the mixed collective plasmonic resonances. To make it more precise,
let us define the electric and magnetic polarization tensors:
\begin{equation*}
    M_{\ze}(\lee,B) = \int_{\p B}\w{\y}(\lee + \K_\ze^{0,*})^{-1}[\n] d \sigma, \quad
    M_{\zm}(\lm,B) = \int_{\p B}\w{\y}(\lm + \K_\zm^{0,*})^{-1}[\n] d \sigma.
\end{equation*}
By the definition of $\G^\k_r(\x)$ \eqref{eq:proker}, and with the help of projections $\e_3 \otimes \e_3$ and $\I - \e_3 \otimes \e_3$ and the relations
\begin{equation*}
    \ei(0) = - 2 \e_3 \otimes \e_3 \po^*\,, \q \hi(0) = -2 (\I - \e_3 \otimes \e_3)\di^* \t \po^*,
\end{equation*}  we can reformulate \eqref{eq:mainresult} in a more compact form:
\begin{align}
    E^r(\x) & = \d k (\I - \di^* \otimes \di^*)e^{i\k^*\dd \x} (i \di^* \t {\bf J}'_\zm +  k ({\bf J}_\ze)_3 \e_3) +O(\d^2)
     =  \frac{2 i \d k }{\tau d_3} e^{i\k^*\dd \x} \mathscr{R} \po^*+O(\d^2),
     \label{eq:maincompacfor}
\end{align}
where the reflection scattering matrix $\mathscr{R}$ is given by
\begin{equation*}
    \mathscr{R} = (\I - \di^* \otimes \di^*)(\di^*\t (1-\e_3 \otimes \e_3) M_{\zm}(\lm,B)(1-\e_3 \otimes \e_3)\di^* \t \I   -\e_3 \otimes \e_3 M_{\ze}(\lee,B) \e_3 \otimes \e_3).
\end{equation*}
We emphasize that $\mathscr{R}$ as a three by three matrix should be regarded as a linear mapping defined on the two dimensional subspace of $\R^3$ perpendicular to $\di^*$, which chacterizes the polarization conversion. In the traditional optical systems, the scattering effect of such kind of subwavelength rough surface is basically negligible so that $\mathscr{R}$ plays a limited role. However, due to the large negative permittivity and permeability of the plasmonic nanoparticles \cite{jain2006calculated,sarid2010modern}, $\lm(\omega)$ and $\lee(\omega)$ can approach the spectrum of $-\K^{0,*}_\zm$ and $-\K^{0,*}_{\ze}$ such that the elements in $M_{\ze}(\lee,B)$ and $ M_{\zm}(\lm,B)$ may blow up with an enhancement order
${1}/{d_\sigma^*}$.
Therefore, following \cite{ammari2016surface,ammari2016plasmaxwell,ammari2017mathematicalscalar}, we may define the collective plasmonic resonances by the frequencies $\omega$ satisfying
\begin{equation*}
    d(\lee(\omega),-\K^{0,*}_{\ze}) \ll 1 \ \text{or}\   d(\lm(\omega),-\K^{0,*}_{\zm}) \ll 1.
\end{equation*}
It is worth emphasizing that these frequencies generally are very different from the single particle case. Physically, these periodically distributed plasmonic nanoparticles can resonate as a whole so that a nanoscale thin layer can significantly affect the wave propagation at the macroscale.
We refer the readers to \cite{ammari2016mathematical}
for some numerical evidences on collective plasmonic resonances.  If the collective plasmonic resonances are excited, the effect of the reflection scattering matrix $\mathscr{R}$ can overcome the size parameter $\d$ and become
visible, giving the possibility of achieving a desired far field pattern.
 However, our electromagnetic plasmonic metasurface, as all the nano-optic devices, still faces many fundamental limits. Actually, following \cite{arbabi2017fundamental}, we may decompose $E^r(\x)$ into two plane waves with orthogonal polarizations: one with polarization $\po^*$ and the other with a polarization orthogonal to $\po^*$.
Moreover, we can introduce the reflection coefficients and polarization conversion coefficients to measure the functionalities of the metasurface, and then analyze their bounds and fundamental relations via holomorphic functional calculus\cite{ammari2017mathematicalscalar}.

\subsection{Equivalent impedance boundary condition}
The final goal of this work is to present an impedance boundary condition approximation. First, we recall the definition of the surface scalar curl and surface vector curl. In fact, they have the explicit forms on the reflective plane $\Gamma$: $\cst u = \frac{\p u_2}{\p x_1} - \frac{\p u_1}{\p x_2}$ for vector function ${\bf u} = (u_1,u_2,0)$ and $\cvt v = (\frac{\p v}{\p x_2},-\frac{\p v}{\p x_1},0)$ for scalar function $v$. We first consider a simple case: the plasmonic nanoparticle is non-magnetic, i.e., $\muc = 1$.
In this case, Theorem \ref{thm:mainresult} indicates that in the far field, the total electric field can be approximated by $E^\d$:
\begin{equation*}
    E^\d := E^i + \d k^2  \G_r^\k ({\bf J}_e)_3 \e_3.
\end{equation*}
Introduce
\begin{equation*}
    \beta_\ze := -\frac{1}{\tau} \int_{\p B} y_3 (\lee + \K_\ze^{0,*})^{-1}[v_3]d\sigma.
\end{equation*}
Then a simple calculation gives, with the help of \eqref{eq:proker} and $\e_3 \t E^i|_{\Gamma} = 0$,
\begin{align*}
    \e_3 \t E^\d|_{\Gamma} & = - \d k^2 \e_3 \t \di^* d_3 ({\bf J}_\ze)_3  e^{ik d' \cdot x'} \\
    & = i \d k 2p_3 \e_3 \t \di^*\beta_\ze  e^{ik d' \cdot x'} \\
    & = \d e^{i k d'\dd x'}(ik d_2,-ik d_1,0)\beta_\ze 2p_3.
\end{align*}
Hence we can derive, by noting that $\cst (\hi)'|_{\Gamma} = -ik \e_3 \dd E^i|_{\Gamma}$, when ${\d}/({d_\sigma d^*_\sigma}) \to 0$:
\begin{align*}
    \e_3 \t E^\d|_{\Gamma} &=  \d \beta_\ze  \cvt \e_3 \dd E^i|_\Gamma
     = \d \beta_\ze \cvt \frac{i}{k} \cst (\hi)'|_{\Gamma}
    = \d \frac{i\beta_\ze}{k} \cvt \cst (H^\d)'|_{\Gamma} + O(\d^2).
\end{align*}
This yields the equivalent impedance boundary condition
\begin{equation} \label{eq:bounappnm}
    \e_3 \t E^\d|_{\Gamma} =  \d \frac{i\beta_\ze}{k} \cvt \cst (H^\d)'|_{\Gamma}
\end{equation}
to approximate the effect of the thin layer in the far field,
up to the second order term. Moreover, this is uniformly valid with respect to the resonance.



 We now consider the magnetic plasmonic nanoparticle, i.e., $\muc \ne 1$, and
introduce the $2\times 2$ matrix
\begin{equation*}
    D_\zm = \frac{1}{\tau} \int_{\p B}y'(\lm + \K_\zm^{0,*})^{-1}[\n'] d \sigma.
\end{equation*}
According to Theorem \ref{thm:mainresult}, the electric field can be approximated by
\begin{equation*}
    E^\d = E^i + \d k \G^\k_r(\x) (i \di^* \t {\bf J}'_\zm + k ({\bf J}_\ze)_3 \e_3 ).
\end{equation*}
In a similar way as in the  non-magnetic case, we can find that
\begin{equation*}
     \e_3 \t E^\d|_{\Gamma} =  \d \frac{i\beta_\ze}{k} \cvt \cst (H^\d)'|_{\Gamma} - i k \d D_\zm (H^ \d)'|_{\Gamma}  +   O(\d^2),
\end{equation*}
with the help of the following observation:
\begin{equation*}
    i\d k \e_3 \t (\G_r^\k \di^* \t {\bf J}'_\zm)|_\Gamma = i \d k \e_3 \t (\di^* \t {\bf J}_\zm')e^{ik d' \cdot x'}|_\Gamma = - i k \d D_\zm (\hi)'|_{\Gamma}.
\end{equation*}
This yields the following effective impedance boundary condition
\begin{equation} \label{boundary approximation}
    \e_3 \t E^\d|_{\Gamma} =  \d \frac{i\beta_\ze}{k} \cvt \cst (H^\d)'|_{\Gamma}  -i k \d D_\zm (H^ \d)'|_{\Gamma}
\end{equation}
to approximate the effect of the thin layer in the macroscopic scale,
up to the second order term. And this is again uniformly valid with respect to the resonance.

\section{Concluding remarks and extensions} \label{concluding}
In this work, we have studied the scattering effect of the periodically distributed plasmonic nanoparticles in the homogenization regime. For the subwavelength structures of such patterns,
 a Leontovich boundary condition (cf. (\ref{boundary approximation})) has been derived
for the approximation of the scattered field in both magnetic and non-magnetic cases.
A similar problem setting was considered
in \cite{delourme2013well,delourme2015high,delourme:tel-00650354}, where the thin layer was
made of dielectric particles, for which the standard variational approach applies. However, the variational framework breaks down in the resonant case, hence instead we have adopted the layer potential theories
in this work to analyze the singularity and prove the uniform validation of the boundary condition approximations.
Our results provide a relatively complete picture of the mechanism for the electromagnetic plasmonic metasurfaces
and can be easily modified to cope with other regimes and boundary conditions.
Therefore this work may be viewed as a generalization of the standard homogenization theory to resonant micro-structures.
And our theoretical analysis and findings may help design a metasurface that can resonate at some specific dense set of frequencies to further realize the broadband wave modulation. In addition, it is also a very interesting and challenging topic to understand how to reconstruct fine structures of thin layers in terms of the scattered field under resonance.

 Although we only consider the homogenization regime in this work since it is the most interesting and important case where the collective resonance can happen,
our results and analysis in this work can actually be extended to several important physical regimes and applications.
First, our approach can be directly applied to other important regimes, such as
$$
\mbox{size of particle $\ll$ period $\sim$ wave length, \quad or \quad size of particle $\ll$ period $\ll$ wave length.}
$$
However, we may not expect the collective plasmonic resonances
in these configurations, since the particles are well separated in some sense though they are distributed in a certain pattern. In fact, the scattering field will be locally dominated by the resonance modes excited by a single nanoparticle. Therefore, the thin layers under these regimes may not have the capability to realize the control of the electromagnetic wave
in the macroscopic scale.
In this work, we have considered only the perfect conducting boundary conditions on the bottom
surface $\Gamma$, but our results and analysis can be extended to other boundary conditions as well,
by replacing the Green's tensors defined in the Section \ref{layerpotential} by the ones
satisfying other specified boundary conditions.
As we have mentioned earlier, our results remain the same for the multiple close-to-touching thin layers.
In fact,  the generalization to the well-separated multi-layer case, i.e.,
$$
\mbox{size of particle $\sim$ period $\ll$ distance between two layers $\sim$ wavelength $\sim$ 1,}
$$
is also direct since the scattering effect of each layer can be considered independently due to the weak interaction.
Formally, suppose we have $n$ thin layers associated with the approximate scattered waves $E^r_1$, $\cdots$,
$E^r_n$ given by similar terms to \eqref{eq:mainresult}, then for this multi-layer structure, the total approximate scattered wave $E^r_{app}$ can be written as $E^r_{app} = E^r_1+\cdots+E^r_n$. With these design flexibilities and extension remarks, our theoretical findings shed also light on the mathematical understanding of electromagnetic plasmonic metasurfaces
and their related optimal design problems.

\end{document}